\newcommand{\pa}{\partial}
\newcommand{\maC}{\mathcal C}
\newcommand{\maH}{\mathcal H}
\newcommand{\maP}{\mathcal P}
\newcommand{\maO}{\mathcal O}
\newcommand{\maV}{\mathcal V}
\newcommand{\maT}{\mathcal T}
\newcommand{\beeta}{\boldsymbol {\eta}}
\newcommand{\bmu}{\boldsymbol {\mu}}
\newcommand{\bv}{\textit{\textbf v}}
\newcommand{\bu}{\textit{\textbf u}}
\newcommand{\ba}{\textit{\textbf a}}
\newcommand{\bbf}{\textit{\textbf f}}
\newcommand{\bbg}{\textit{\textbf g}}
\newcommand{\bbh}{\textit{\textbf h}}
\newcommand\comment[1]{}
\newcommand{\maK}{\mathcal K}
\newtheorem{theorem}{Theorem}[section]
\newtheorem{proposition}[theorem]{Proposition}
\newtheorem{lemma}[theorem]{Lemma}
\theoremstyle{definition}
\newtheorem{definition}[theorem]{Definition}
\theoremstyle{definition}
\newtheorem{algorithm}[theorem]{Algorithm}
\theoremstyle{remark}
\newtheorem{remark}[theorem]{Remark}
\author[Y.-J. Lee]{Young-Ju Lee} \address{Young-Ju Lee, Department of Mathematics, Rutgers, The State University of New Jersey, Piscataway, NJ 08854}
\email{leeyoung@math.rutgers.edu}
\author[H. Li]{Hengguang Li} \address{Hengguang Li, Department of Mathematics,
Wayne State University, Detroit, MI 48202}
\email{hli@math.wayne.edu}
\thanks{H. Li was supported in part by the NSF grant DMS-1115714 and Y.-J. Lee was partially supported by the NSF grant DMS-0915028.}
\begin{document}
\title[axisymmetric Stokes equations]{Axisymmetric Stokes equations in polygonal domains: regularity and finite element approximations}

\date{\today}

\begin{abstract} 
We study the regularity and finite element approximation of the axisymmetric Stokes problem on a polygonal domain $\Omega$. In particular, taking into account the singular coefficients in the equation and non-smoothness of the domain, we establish the well-posedness and full regularity of the solution in new weighted Sobolev spaces $\maK^m_{\mu, 1}(\Omega)$. Using our a priori results, we give a specific construction of graded meshes on which the Taylor-Hood mixed method approximates singular solutions at the optimal convergence rate. Numerical tests are presented to confirm the theoretical results in the paper.

\end{abstract}
\maketitle
\section{Introduction}

The finite element  simulation of  partial differential equations in 3D usually presents a serious computational challenge, due to  the high-dimensional nature of the problem. In particular, the computational complexity is even higher when high-order discretization schemes are applied to systems of equations. For axisymmetric problems, in order to improve the effectiveness of the numerical algorithm, a highly effective technique is to reduce the dimension of the computational domain using properties of axisymmetry.

Consider the 3D Stokes equations in a bounded domain. When both the data and domain are invariant with respect to the rotation about the $z$-axis, the 3D Stokes problem can be reduced into two decoupled 2D equations: a vector saddle point problem (the axisymmetric Stokes equations) and a scalar elliptic problem (the azimuthal Stokes equation). Despite the potential of substantial savings in computations, this process leads to irregular equations with singular coefficients, which together with the non-smoothness of the domain, raises the difficulty in analyzing the problem on both the continuous and discrete levels. 
In this paper, we shall study the well-posedness, regularity, and optimal finite element approximations of the axisymmetric  Stokes problem with singular solutions.

The numerical approximation of axisymmetric problems has been of great interest in recent years. A comprehensive discussion on spectral methods for different axisymmetric problems and on corresponding weighted Sobolev spaces can be found in \cite{book}.  Assuming the \textit{full regularity} in weighted spaces, we also mention that finite element/multigrid methods for the axisymmetric Laplace operator were formulated in \cite{GP06,MR82}; the partial Fourier approximation of axisymmetric linear elasticity problems were treated in \cite{MR1706008};  for the theoretical justification and numerical approximation of the axisymmetric Maxwell equations, we refer the readers to \cite{ACL02,CGP08} and references therein. In particular, for axisymmetric Stokes equations, Belhachmi, Bernardi, and  Deparis \cite{BBD06} established the stability and approximation properties for the P1isoP2/P1 mixed method, while Lee and Li  \cite{ LL11} proved that the general Taylor-Hood mixed methods are stable.  Several stability results and local interpolation operators  will be borrowed from these works for the analysis in this paper.

Although there is extensive literature in developing optimal finite element methods for elliptic equations with singular solutions, there are few works on the finite element  treatment for singular solutions  of axisymmetric equations, most of which are for the axisymmetric Poisson equation. For example, see \cite{MR1411853, Li11, Nkemzi05}.

Compared with standard elliptic problems, the main difficulties in numerical analysis of  singular solutions of axisymmetric equations arise in handling both continuous and discrete equations. Namely,  on the continuous level, it  requires a good understanding on the singular solution in the original 3D problem from the non-smoothness of the domain (e.g., conical points and edges) and on the interaction  between the axisymmetric equations and the 3D problem. The establishment of isomorphic mappings in special weighted spaces is critical. On the discrete level, because of the singular coefficients and vanishing weights in the function space, the approximation properties of polynomials and  the stability of certain operators to the finite element space have to be reconsidered in the weighted sense. 

As mentioned above, we shall focus on the a priori estimates and the finite element approximation of the axisymmetric Stokes problem, especially when the solution has singularities due to the singular coefficients and the  non-smooth domain. In particular, we shall introduce new weighted Sobolev spaces (Definition \ref{def.regular1}) and establish  the full regularity up to any order  in these spaces (Theorem \ref{thm.stokesreg}). Then, we apply our regularity result to the Taylor-Hood mixed method for the axisymmetric Stokes problem. Using local estimates on special interpolation operators in weighted spaces, we give a construction of  a sequence of graded meshes, on which the mixed finite element approximation converges to the singular solution at the optimal rate (Theorem \ref{thm.main2}),  as is achieved in the finite element method for smooth solutions of elliptic equations \cite{BS02, Ciarlet78}. Note that  the isomorphic mappings (Proposition \ref{prop.1}) are only for the usual Sobolev space. Therefore, the existing 3D regularity results in weighted spaces of Kondrat$^\prime$ev's type can not be directly translated to the new weighted space.

To the best of our knowledge, this is the first  full regularity result in weighted Sobolev spaces for axisymmetric Stokes equations. It is expected that our theory can provide guidelines on the regularity estimates  for other axisymmetric problems involving vector fields. Although our theory is applied to the Taylor-Hood finite element methods in this paper,  the approach  applies to other stable mixed methods for the axisymmetric Stokes problem, in which the local approximation depends on the local patch in the triangulation. The regularity result will also be useful for analysis of many other aspects of the finite element method. 

The rest of the paper is organized as follows. In Section \ref{sec2}, we describe the axisymmetric Stokes problem and its mixed weak formulation. In addition, we introduce two types of weighted Sobolev spaces (Definitions \ref{def.regular} and \ref{def.regular1}) to carry out the analysis. Useful connections between these weighted spaces are also discussed. In Section \ref{sec3}, using local estimates for different parts of the domain and certain isometric mappings, we provide our first main result in Theorem \ref{thm.stokesreg}, the full regularity estimates in weighted spaces for axisymmetric Stokes equations. The solution is shown to be always smoother than the given data in weighted spaces although there may be singularities in the solution. In Section \ref{sec4}, we propose a construction of a sequence of graded meshes for singular solutions. Based on the regularity results in Section \ref{sec3}, we give a specific range for the grading parameter $\kappa$, such that the Taylor-Hood mixed method approximates singular solutions at the optimal rate. This is our second main result, which is formulated in Theorem \ref{thm.main2}. In section \ref{sec5}, we provide numerical results on graded meshes for different singular solutions. These tests convincingly verify our theoretical prediction on the convergence rates and on the construction of optimal graded meshes for singular solutions of the axisymmetric Stokes problem.\\\\
\textbf{Acknowledgements.} We would like to thank Douglas N. Arnold and Victor Nistor for useful discussions. Special thanks go to Serge Nicaise for pointing out critical references for this research.

\section{Preliminaries and notation} \label{sec2}

\subsection{Axisymmetric Stokes equations and function spaces} 

Let $\tilde\Omega\subset \mathbb R^3$ be a 3D domain obtained by the rotation of a 2D polygonal (meridian) domain $\Omega\subset\mathbb R^2$  in the $rz$-plane about the $z$-axis, where $r=\sqrt{x^2+y^2}$ is the distance to the $z$-axis. Namely, $\tilde \Omega:=\Omega\times [0, 2\pi)$. (See Figure \ref{fig.domain} for example.)  A 3D vector field $\tilde\bv=(v_1, v_2, v_3)$ (resp. function $\tilde v$) is axisymmetric if 
\begin{eqnarray}\label{eqn.1}
\mathcal R_{-\sigma} (\tilde{\bv}\circ \mathcal R_\sigma)=\tilde{\bv}  \qquad ({\rm{resp.}}\   [ \tilde v\circ \mathcal R_\sigma](x, y, z)=\tilde v(x, y, z)),\quad\forall\ \sigma\in[0, 2\pi),
   \end{eqnarray}
where $\mathcal R_\sigma$ is the rotation around the $z$-axis with angle $\sigma$.  In addition, the vector field can also be expressed by its radial, angular, and axial components 
\begin{eqnarray*}\label{eqn.3}
\tilde\bv=(v_r, v_\theta, v_z)=(v_1\cos\theta +v_2\sin\theta , - v_1\sin\theta+v_2\cos \theta , v_3).
\end{eqnarray*} 

\begin{figure}
\includegraphics[scale=0.27]{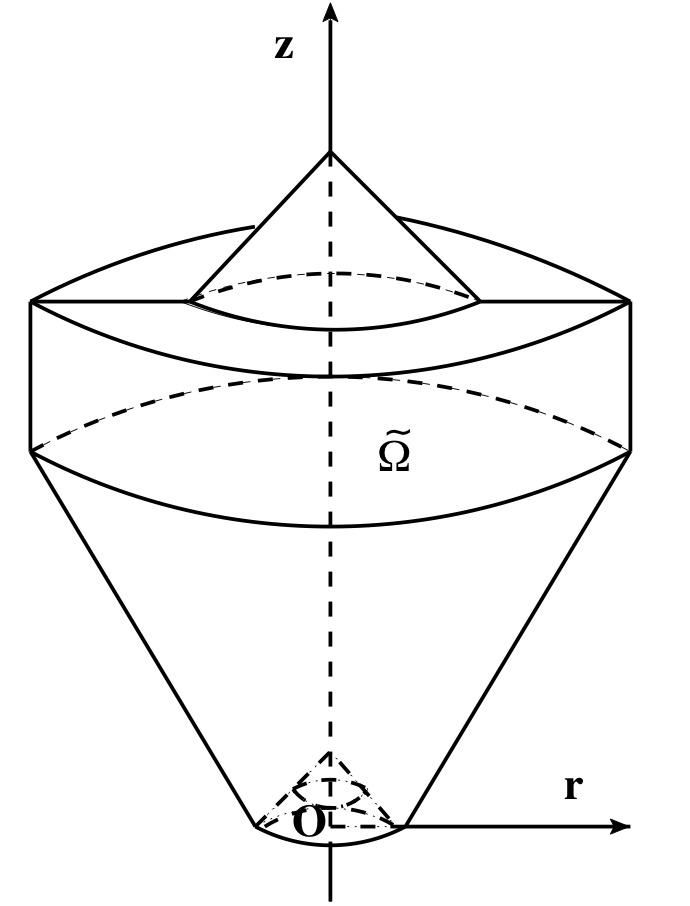}\hspace{3.5cm}\includegraphics[scale=0.27]{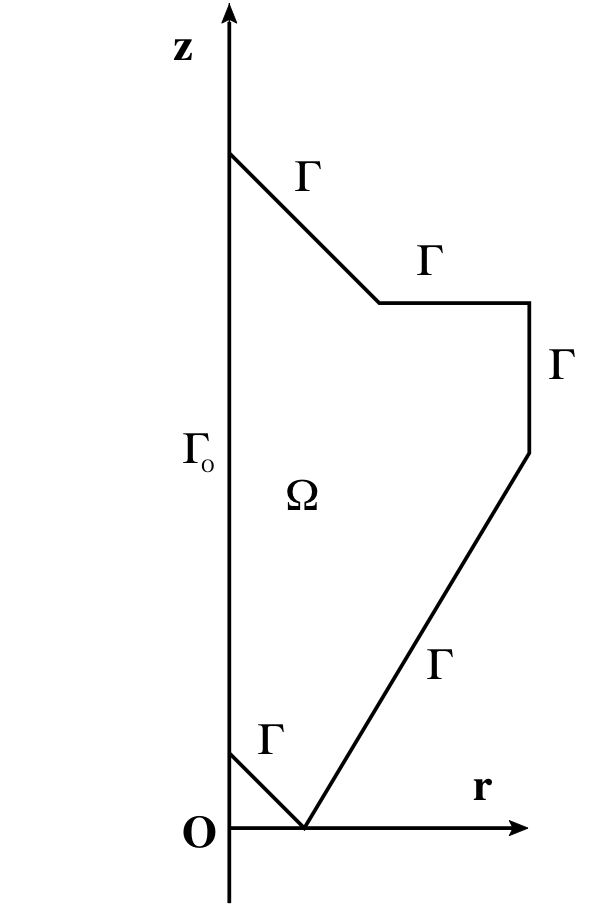}
\caption{An axisymmetric 3D domain $\tilde \Omega$ (left); the corresponding  2D polygonal domain $\Omega$ (right).}\label{fig.domain}
\end{figure}

Consider the 3D axisymmetric Stokes problem,
\begin{eqnarray}\label{eqn.2}
\left\{\begin{array}{ll}
-\Delta \tilde\bu+\nabla \tilde p =\tilde\bbf\quad {\rm{in}} \quad \tilde\Omega\\
{\rm{div}}\tilde\bu=0\quad {\rm{in}} \quad \tilde\Omega\\
\tilde\bu = 0 \quad {\rm{on}} \quad \pa\tilde\Omega,\\
\end{array}\right.
\end{eqnarray}
where $\tilde\bu$ and $\tilde\bbf$ (resp. $\tilde p$) are axisymmetric vector fields (resp. function) satisfying \eqref{eqn.1}. Assuming  the set $\pa\Omega\cap\{r=0\}$ has positive measure, we denote $\Gamma_0:=\pa\Omega\cap\{r=0\}$ and $\Gamma:=\pa\Omega\backslash\{r=0\}$ (Figure \ref{fig.domain}).  Then,  equation \eqref{eqn.2} can be reduced to a system of two decoupled equations \cite{book}:
the axisymmetric Stokes equations
\begin{eqnarray}\label{eqn.5}
\left\{\begin{array}{ll}
-(\pa_r^2+r^{-1}\pa_r+\pa_z^2-r^{-2})u_r+\partial_rp=f_r\quad {\rm{in}} \quad \Omega\\
-(\pa_r^2+r^{-1}\pa_r+\pa_z^2)u_z+\partial_zp=f_z\quad {\rm{in}} \quad \Omega\\
(\partial_r+r^{-1})u_r+\partial_zu_z=0\quad {\rm{in}} \quad \Omega\\
(u_r, u_z) = (0, 0) \quad {\rm{on}} \quad \Gamma,\\
\end{array}\right.
\end{eqnarray}
and the azimuthal Stokes equation
\begin{eqnarray}\label{eqn.6}
\left\{\begin{array}{ll}
\mathcal  -(\pa_r^2+r^{-1}\pa_r+\pa_z^2-r^{-2})u_\theta=f_\theta\quad {\rm{in}} \quad \Omega\\
u_\theta = 0 \quad {\rm{on}} \quad \Gamma.\\
\end{array}\right.
\end{eqnarray}

In this paper, we shall focus on the analysis and finite element approximation for the axisymmetric Stokes problem \eqref{eqn.5}. Numerical schemes for the azimuthal Stokes equation \eqref{eqn.6} shall be  studied in a forthcoming paper.
 Recall the polygonal domain $\Omega$ is in the $rz$-plane. We first adopt a class of weighted Sobolev spaces from \cite{book}.

\begin{definition}\label{def.regular} (Type I Weighted Spaces).
For an integer $m\geq 0$, define
\begin{eqnarray*} 
L^2_1(\Omega):=\{v,\ \int_\Omega v^2rdrdz<\infty\},\quad H^m_1(\Omega):=\{v,\ \pa_c^\alpha v\in L^2_1(\Omega),\  |\alpha|\leq  m\},
\end{eqnarray*}
where the muti-index $\alpha=(\alpha_1, \alpha_2)$ is a pair of nonnegative integers, $|\alpha|=\alpha_1+\alpha_2$, and $\pa^\alpha_c=\pa_r^{\alpha_1}\pa_z^{\alpha_2}$.
 The norms and the semi-norms for any $v\in H^m_1(\Omega)$ are  
\begin{eqnarray*}
\|v\|^2_{H^m_1(\Omega)}:=\sum_{|\alpha|\leq m}\int_{\Omega}(\partial_c^\alpha v)^2rdrdz, \quad
|v|^2_{H^m_1(\Omega)}:=\sum_{|\alpha|=m}\int_{\Omega}(\partial_c^\alpha v)^2rdrdz.
\end{eqnarray*}
Furthermore, we define two spaces $H^m_+(\Omega)$ and  $H^{m}_-(\Omega)$.

For $H^m_+(\Omega)$, if $m$ is not even, 
\begin{eqnarray}
&H^{m}_+(\Omega):=\{v\in H^m_1(\Omega),\ \pa_r^{2i-1}v|_{\{r=0\}}=0,\ 1\leq i<\frac{m}{2} \}, \label{eqn.s2}\\
&\|v\|_{H^{m}_+(\Omega)}=\|v\|_{H^m_1(\Omega)};\nonumber
\end{eqnarray}
if $m$ is even, besides the condition in \eqref{eqn.s2}, we require $ \int_{\Omega}(\partial_r^{m-1}v)^2r^{-1}drdz<\infty$
for any $v\in H^m_+(\Omega)$, and the corresponding norm is
$$\|v\|_{H^{m}_+(\Omega)}=\big(\|v\|^2_{H^m_1(\Omega)}+\int_{\Omega}(\partial_r^{m-1}v)^2r^{-1}drdz\big)^{1/2}.$$

For $H^m_-(\Omega)$, if $m$ is not odd,
\begin{eqnarray}
&H^{m}_-(\Omega):=\{v\in H^m_1(\Omega),\ \pa_r^{2i}v|_{\{r=0\}}=0,\ 0\leq i<\frac{m-1}{2} \} \label{eqn.s1}\\
&\|v\|_{H^{m}_-(\Omega)}=\|v\|_{H^m_1(\Omega)};\nonumber
\end{eqnarray}
if $m$ is odd, besides the condition in \eqref{eqn.s1}, we require 
$
 \int_{\Omega}(\partial_r^{m-1}v)^2r^{-1}drdz<\infty,
$ for any $v\in H^m_-(\Omega)$, and the corresponding norm is
\begin{eqnarray*}
\|v\|_{H^{m}_-(\Omega)}=\big(\|v\|^2_{H^m_1(\Omega)}+\int_{\Omega}(\partial_r^{m-1}v)^2r^{-1}drdz\big)^{1/2}.
\end{eqnarray*}

Thus, we   denote different subspaces:
\begin{eqnarray*}
&H^1_{1, 0}(\Omega):=H^1_1(\Omega)\cap\{v|_{\Gamma}=0\},  \quad H^1_{-, 0}(\Omega):=H^1_{-}(\Omega)\cap\{v|_{\partial \Omega}=0\}, \\ 
& H^1_{+, 0}(\Omega):=H^1_{+} (\Omega)\cap\{v|_{\Gamma}=0\}, \quad L^2_{1, 0}(\Omega):=L^2_1(\Omega)\cap\{v, \int_\Omega vrdrdz=0\}.
\end{eqnarray*}

\end{definition}

We now introduce another type of weighted spaces for our analysis on singular solutions of equation \eqref{eqn.5}.

\begin{definition}\label{def.regular1} (Type II Weighted Spaces). Let $Q_i$ be the $i$th  vertex of $\Omega$ and  define  the vertex set $\mathcal Q:=\{Q_i\}_{i=1}^{I}$.  Denote by $L$  the smallest distance from a vertex   to any disjoint edge of $\pa\Omega$. 
Let  $B(x, r_0)$ be the ball centered at $x$ with radius $r_0$. Let $\vartheta\in \mathcal C^{\infty}(\bar\Omega\backslash\mathcal Q)$ be a function, such  that  $\vartheta=|x-Q_i|$ in $\mathcal V_i:=\Omega\cap B(Q_i, L/2)$  and $\vartheta\geq  L/2$ in $\Omega\backslash \cup_{i=1}^{I} \mathcal V_i$. Note that $\maV_i$ and $\maV_j$ are disjoint if $i\neq j$. Thus, we define for $\mu\in\mathbb R$ and for any open set $G\subset\Omega$,
\begin{eqnarray*}
	\maK^{m}_{\mu, 1}(G) := \{v, \ \vartheta^{-\mu+|\alpha|}
	\partial_c^\alpha v\in L_1^2(G), \ |\alpha|
	\leq m\}.
\end{eqnarray*}
with the semi-norm and norm
\begin{eqnarray*}
 |v|^2_{\maK^{m}_{\mu, 1}(G)}:=\sum_{|\alpha|= m}\|\vartheta^{m-\mu}\partial_c^\alpha v\|^2_{L^2_1(G)}, \quad \|v\|^2_{\maK^{m}_{\mu, 1}(G)}:=\sum_{l=0}^m|v|^2_{\maK^{l}_{\mu, 1}(G)}.
\end{eqnarray*}
 Similarly, we define the subspaces  of $\maK^m_{\mu, 1}(\Omega)$: 
\begin{eqnarray}
&&\maK^{m}_{\mu, +}(\Omega):=\{v\in \maK^m_{\mu, 1}(\Omega),\ \int_{\Omega}\frac{\big(\pa_r^{2i-1}(\vartheta^{-\mu+l}v)\big)^2}{r}drdz<\infty,\ 1\leq i\leq \frac{l}{2}, \ 0\leq l\leq m \}, \label{eqn.ss2}\\
&&\maK^{m}_{\mu, -}(\Omega):=\{v\in \maK^m_{\mu, 1}(\Omega),\ \int_{\Omega}\frac{\big(\pa_r^{2i}(\vartheta^{-\mu+l}v))^2}{r}drdz<\infty,\ 0\leq i\leq \frac{l-1}{2},\ 0\leq l\leq m\}. \label{eqn.ss1}
\end{eqnarray}
The corresponding norms are
\begin{eqnarray*}
\|v\|_{\maK^{m}_{\mu, +}(\Omega)}=\Big(\|v\|^2_{\maK^{m}_{\mu, 1}(\Omega)}+\sum_{l\leq m}\sum_{1\leq i\leq \frac{l}{2}}\int_{\Omega}\big(\partial_r^{2i-1}(\vartheta^{-\mu+l}v)\big)^2r^{-1}drdz\Big)^{1/2}, \label{eqn.weight1}\\
\|v\|_{\maK^{m}_{\mu, -}(\Omega)}=\Big(\|v\|^2_{\maK^{m}_{\mu, 1}(\Omega)}+\sum_{l\leq m}\sum_{0\leq i\leq \frac{l-1}{2}}\int_{\Omega}\big(\partial_r^{2i}(\vartheta^{-\mu+l}v)\big)^2r^{-1}drdz\Big)^{1/2}. \label{eqn.weight2}
\end{eqnarray*}

\end{definition}

\begin{remark}
The solution of the azimuthal Stokes equation \eqref{eqn.6} is well-defined in $H^1_{-, 0}(\Omega)$ for $f_\theta\in H^1_{-, 0}(\Omega)'$ \cite{book}. Since it is completely decoupled from equation \eqref{eqn.5}, in the analysis below, we always set $f_\theta=0$ (and therefore $u_\theta=0$) in equation \eqref{eqn.6}. Namely, the angular components of the solution $\tilde\bu$ and the given data $\tilde\bbf$ vanish in the 3D stokes equation \eqref{eqn.2}. This will not affect our results on the axisymmetric Stokes problem, but simplify the exposition.
\end{remark}

Let $\tilde\bv$ (resp. $\tilde v$) be an axisymmetric  vector field (resp. function). Let  $\tilde{\mathbf H}^m(\tilde\Omega)\subset [H^{m}(\tilde\Omega)]^3$ (resp. $\tilde{H}^m(\tilde\Omega)\subset H^{m}(\tilde\Omega)$) be the subspace of axisymmetric  vector fields (resp. functions). We recall the following results from \cite{book}.

\begin{proposition}\label{prop.1}
The trace operator $\tilde v(x, y, z)\rightarrow v(r, z)$ defines the isomorphism
\begin{eqnarray*}
\tilde H^m(\tilde\Omega)\rightarrow H^m_+(\Omega);
\end{eqnarray*}
and the trace operator $\tilde{\bv}\rightarrow (v_r, v_\theta, v_z)$ defines the   isomorphism
\begin{eqnarray}\label{eqn.prop1}
\tilde{\mathbf{H}}^m(\tilde\Omega)\rightarrow H^m_-(\Omega)\times H^m_-(\Omega)\times H^m_+(\Omega),
\end{eqnarray}
where $v_r$, $v_\theta$, and $v_z$ are all axisymmetric functions.
\end{proposition}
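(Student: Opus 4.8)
The statement is local in character, so the plan is to reduce it, via a smooth partition of unity on $\Omega$, to a fixed neighbourhood of the axis $\{r=0\}$ (including the segment $\Gamma_0$ and its endpoints) and then carry out an explicit computation in cylindrical coordinates $(r,\theta,z)$, in which the $3$D volume element is $r\,dr\,dz\,d\theta$; integrating over $\theta\in[0,2\pi)$ is exactly what produces the weight $r$ defining $L^2_1(\Omega)$ and the spaces $H^m_\pm(\Omega)$. On any piece $\tilde\Omega\cap\{r>\ep\}$ the change of variables $(r,\theta,z)\mapsto(r\cos\theta,r\sin\theta,z)$ is a diffeomorphism onto its image and the weight $r$ is bounded above and below, so there an axisymmetric $H^m(\tilde\Omega)$ function (resp.\ vector field) is nothing but an $H^m(\Omega\cap\{r>\ep\})$ function (resp.\ its components), with immediate norm equivalence; hence all the work is concentrated near the axis, where a cutoff localizes the problem.

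The core is an elementary but careful computation. Writing $\alpha=(a,b,c)$ for a $3$D multi-index, $|\alpha|=a+b+c$, $\pa^\alpha:=\pa_x^a\pa_y^b\pa_z^c$, and using $\pa_x=\cos\theta\,\pa_r-\tfrac{\sin\theta}{r}\pa_\theta$, $\pa_y=\sin\theta\,\pa_r+\tfrac{\cos\theta}{r}\pa_\theta$, $\pa_z=\pa_z$, one proves by induction on $|\alpha|$ that for a ``rotation profile'' $e^{\mathrm i n\theta}w(r,z)$ with $n\in\ZZ$ one has $\pa^\alpha\big(e^{\mathrm i n\theta}w\big)=\sum_k e^{\mathrm i k\theta}(L_{\alpha,k}w)$ with $|k|\le|n|+|\alpha|$, where each $L_{\alpha,k}$ is a differential operator in $(r,z)$ whose singular ($r^{-1}$) terms reorganize into combinations involving only $\pa_z$, $\pa_r$ and the first-order operators $D_\pm:=\pa_r\pm r^{-1}$, so that the leading radial singularities cancel and the uncancelled singular weights are precisely those controlled by the norm of $H^{|\alpha|}_+(\Omega)$ when $n$ is even and of $H^{|\alpha|}_-(\Omega)$ when $n$ is odd. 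Squaring, integrating first in $\theta$ (orthogonality of the $e^{\mathrm i k\theta}$) and then against $r\,dr\,dz$, one finds that $e^{\mathrm i n\theta}w\in H^m(\tilde\Omega)$ if and only if $w$ satisfies the weighted memberships $\pa_c^\beta w\in L^2_1(\Omega)$ for $|\beta|\le m$, the vanishing at $\{r=0\}$ of the radial derivatives of the parity opposite to the one forced by $n$ up to the orders prescribed by $m$, and, in the single borderline case in which a Hardy inequality degenerates, the extra integrability $\int_\Omega(\pa_r^{m-1}w)^2r^{-1}\,dr\,dz<\infty$. For $n=0$ this list is exactly Definition \ref{def.regular} of $H^m_+(\Omega)$, and for $n=\pm1$ it is exactly that of $H^m_-(\Omega)$; this is where the conditions \eqref{eqn.s2}, \eqref{eqn.s1} and the extra $r^{-1}$-weighted terms come from.

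The scalar statement is then immediate: an axisymmetric $\tilde v$ is the profile with $n=0$, so $\tilde v\in\tilde H^m(\tilde\Omega)\Leftrightarrow v\in H^m_+(\Omega)$ with equivalent norms. For the vector statement, write $v_1=v_r\cos\theta-v_\theta\sin\theta$, $v_2=v_r\sin\theta+v_\theta\cos\theta$, $v_3=v_z$. The component $v_3$ is an axisymmetric scalar, contributing the factor $H^m_+(\Omega)$; $v_1$ and $v_2$ are once-rotating profiles built from the pair $(v_r,v_\theta)$, and since $\tilde{\mathbf H}^m(\tilde\Omega)$ puts \emph{both} $v_1$ and $v_2$ in $H^m(\tilde\Omega)$, inverting the orthogonal $2\times2$ rotation and invoking the case $n=\pm1$ gives $v_r,v_\theta\in H^m_-(\Omega)$; conversely, profiles $(v_r,v_\theta,v_z)\in H^m_-(\Omega)\times H^m_-(\Omega)\times H^m_+(\Omega)$ reassemble through the same formulas into a genuine Cartesian vector field in $[H^m(\tilde\Omega)]^3$ obeying \eqref{eqn.1}. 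In each case the trace map is a bounded bijection between the dense subspace of fields that are $\CI$ up to $\pa\tilde\Omega$ (axisymmetric, resp.\ with $\CI$ components) and the corresponding smooth profiles, with bounded inverse given by the axisymmetric, resp.\ rotationally equivariant, extension; the density of such smooth fields in the weighted spaces $H^m_\pm(\Omega)$ --- obtained by mollifying tangentially to $\{r=0\}$ so as not to destroy the vanishing conditions --- then upgrades this to the claimed isomorphisms.

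I expect the main obstacle to be the near-axis bookkeeping in the second step: showing that the cancellation of leading radial singularities under iterated Cartesian differentiation is \emph{systematic}, so that no condition stronger than those already packaged into $H^m_\pm(\Omega)$ ever arises, and correctly isolating the one borderline weighted term that must be imposed by hand. The other delicate point, needed only to license the reduction to smooth profiles, is the density statement, which requires care near the points where $\Gamma_0$ meets $\Gamma$. Complete details for both are in \cite{book}.
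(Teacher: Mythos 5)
The paper offers no internal proof of Proposition \ref{prop.1} to compare against: the statement is simply recalled from \cite{book}. Your sketch is a faithful outline of the argument given in that reference (localization away from and near the axis, the Fourier-mode bookkeeping for $n=0,\pm1$ with the operators $\pa_r\pm r^{-1}$ producing exactly the parity conditions \eqref{eqn.s2}--\eqref{eqn.s1} and the borderline $r^{-1}$-weighted term, the linear-algebra step reducing the vector case to the scalar modes, and density of smooth profiles), and since you, like the paper, defer the two genuinely technical points (the systematic cancellation estimates and the density near $\Gamma_0\cap\Gamma$) to \cite{book}, the proposal is correct and consistent with how the paper treats this result.
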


\begin{remark}
Based on the well-posedness of the 3D Stokes problem and Proposition \ref{prop.1}, the right space for the solution $(u_r, u_z, p)$ of equation \eqref{eqn.5} is $H^1_-(\Omega)\times H^1_+(\Omega) \times L^2_1(\Omega)$. Note that we can obtain the boundary conditions on $\Gamma$  for equation  \eqref{eqn.5} by inheriting the boundary condition from the original 3D problem \eqref{eqn.2}. Based on Proposition 3.18 in \cite{ACL02}, $r^{-1}v\in L^2_1(\Omega)$ implies $v=0$ on the $z$-axis. This leads to  the zero boundary conditions on $\Gamma_0$  for $u_r$  by the definition of $H^m_-(\Omega)$. For a strong solution $u_z\in H^2_+(\Omega)$, the condition $r^{-1}\pa_ru_z\in L^2_1(\Omega)$ gives rise to the Neumann boundary condition $\pa_ru_z=0$ on $\Gamma_0$. These boundary conditions are due to the axisymmetry of the corresponding 3D vector field. Moreover, the  constraints on the integrals  in \eqref{eqn.ss2} and \eqref{eqn.ss1} imply $\pa_r^{2i-1}(\vartheta^{-\mu+l}v)|_{\Gamma_0}=0$ with $1\leq i\leq l/2$ and $0\leq l\leq m$  for $v\in \maK^m_{\mu, +}(\Omega)$ and $\pa_r^{2i}(\vartheta^{-\mu+l}v)|_{\Gamma_0}=0$ with $0\leq i\leq (l-1)/2$ and $0\leq l\leq m$ for $v\in \maK^m_{\mu, -}(\Omega)$.
\end{remark}

Thus, the variational formulation for the axisymmetric Stokes equation \eqref{eqn.5} is: Find $(\bu, p)\in H^1_{-,0}(\Omega)\times H^1_{+,0}(\Omega)\times L^2_{1,0}(\Omega)$, such that for any $(\bv, q)\in H^1_{-,0}(\Omega)\times H^1_{+,0}(\Omega)\times L^2_1(\Omega)$,
\begin{eqnarray}\label{eqn.4}
\left\{\begin{array}{ll}
a(\bu, \bv)+ b(\bv,   p) =\int_{\Omega}\bbf\cdot\bv\quad {\rm{in}} \quad\Omega\\
 b(\bu,  q)=0\quad {\rm{in}} \quad \Omega,
\end{array}\right.
\end{eqnarray}
where
\begin{eqnarray*}
a(\bu, \bv)= \int_{\Omega} (\nabla_c\bu: \nabla_c \bv+r^{-2}u_rv_r)rdrdz, \quad b(\bu, q)= -\int_{\Omega} (q\text{div}_c\bu+r^{-1}qu_r) rdrdz,
\end{eqnarray*}
$\bu=(u_r, u_z)^t$ and $\bbf=(f_r,f_z)^t$ as in \eqref{eqn.5}, $\text{div}_c\bu=\partial_ru_r+\partial_zu_z$, and $\nabla_c\bu$ is the matrix $(\partial_r,  \partial_z)^t\bu^t$. 

\begin{proposition}\label{lem.normal}
The weak formulation  \eqref{eqn.4} defines a unique solution $(\bu, p)\in H^1_{-,0}(\Omega)\times H^1_{+,0}(\Omega)\times L^2_{1, 0}(\Omega)$ for  $\bbf\in  H^1_{-,0}(\Omega)'\times H^1_{+,0}(\Omega)'$, and 
\begin{eqnarray}
\|u_r\|_{H^1_{-}(\Omega)}+\|u_z\|_{H^1_{+}(\Omega)}+\|p\|_{L^2_1(\Omega)}\leq C\|\bbf\|_{H^1_{-,0}(\Omega)'\times H^1_{+,0}(\Omega)'}.\label{ee2}
\end{eqnarray}
\end{proposition}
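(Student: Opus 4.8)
The statement is a well-posedness result for the mixed saddle-point formulation \eqref{eqn.4}, so the natural strategy is to verify the hypotheses of the Babuška–Brezzi theory in the weighted Hilbert space setting $X := H^1_{-,0}(\Omega)\times H^1_{+,0}(\Omega)$ for the velocity and $M := L^2_{1,0}(\Omega)$ for the pressure. Concretely, I would check four ingredients: (i) continuity of the bilinear forms $a(\cdot,\cdot)$ on $X\times X$ and $b(\cdot,\cdot)$ on $X\times M$; (ii) coercivity of $a$ on the kernel $Z := \{\bv\in X:\ b(\bv,q)=0\ \forall q\in M\}$ (here $Z$ is exactly the divergence-free subspace in the weighted sense $\operatorname{div}_c\bv + r^{-1}v_r = 0$); (iii) the inf-sup (LBB) condition $\inf_{q\in M}\sup_{\bv\in X} b(\bv,q)/(\|\bv\|_X\|q\|_M) \ge \beta > 0$; and then (iv) invoke the abstract theorem to get existence, uniqueness and the a priori bound \eqref{ee2}. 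Since the paper explicitly says stability of the Taylor–Hood methods and related results are ``borrowed from'' \cite{BBD06, LL11, book}, I expect the cleanest route is to cite the continuous inf-sup condition and kernel coercivity directly from those references rather than reprove them.

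For the continuity estimates, the form $b$ pairs $q\in L^2_1$ against $\operatorname{div}_c\bv \in L^2_1$ and against $r^{-1}v_r$, and the point is that $r^{-1}v_r\in L^2_1(\Omega)$ is precisely built into membership in $H^1_-(\Omega)$ (as recalled in the Remark after Proposition~\ref{prop.1}, via Proposition 3.18 of \cite{ACL02}); so $|b(\bv,q)|\le C\|\bv\|_{H^1_-\times H^1_+}\|q\|_{L^2_1}$. Continuity of $a$ is immediate once one notes the extra term $\int_\Omega r^{-2}u_r v_r\, r\,drdz = \int_\Omega r^{-1}u_r\cdot r^{-1}v_r\, r\,drdz$ is controlled by the same $r^{-1}v_r\in L^2_1$ structure. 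Coercivity of $a$ on all of $X$ (not just $Z$) should in fact hold here: $a(\bv,\bv) = |\nabla_c v_r|^2_{L^2_1} + |\nabla_c v_z|^2_{L^2_1} + \|r^{-1}v_r\|^2_{L^2_1}$, and a weighted Poincaré/Hardy inequality on $\Omega$ (using the homogeneous boundary data on $\Gamma$ for $v_z$ and the combination of the $\Gamma$-condition with the $r^{-1}v_r$-control for $v_r$) gives $a(\bv,\bv)\ge c\,(\|v_r\|_{H^1_-}^2 + \|v_z\|_{H^1_+}^2)$. This replaces the kernel-coercivity hypothesis with the stronger $X$-ellipticity, which also slightly simplifies the abstract argument.

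The main obstacle is the inf-sup condition \eqref{eqn.4} for $b$ in these weighted spaces: one must show that every mean-zero $q\in L^2_{1,0}(\Omega)$ is the weighted divergence $\operatorname{div}_c\bv + r^{-1}v_r$ of some $\bv\in H^1_{-,0}\times H^1_{+,0}$ with $\|\bv\|_X\le C\|q\|_{L^2_1}$ — the weighted analogue of the classical Bogovskii/right-inverse-of-divergence result on a polygon. Here the cleanest path, and the one consistent with the framework of the paper, is to lift the problem to the 3D domain $\tilde\Omega$ using the isomorphisms of Proposition~\ref{prop.1}: an axisymmetric $\tilde\bv\in \tilde{\mathbf H}^1(\tilde\Omega)$ with $\tilde\bv = 0$ on $\partial\tilde\Omega$ corresponds to $(v_r,v_z)\in H^1_{-,0}\times H^1_{+,0}$, the 3D divergence $\operatorname{div}\tilde\bv$ corresponds to $\operatorname{div}_c\bv + r^{-1}v_r$, and $L^2$ on $\tilde\Omega$ corresponds to $L^2_1(\Omega)$ (up to the $2\pi$ factor); so the desired weighted right inverse on $\Omega$ is exactly the restriction of the standard right inverse of $\operatorname{div}$ on the Lipschitz domain $\tilde\Omega$ applied to the (axisymmetric) lift of $q$, whose axisymmetry is preserved by the construction. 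Together with the $X$-ellipticity of $a$, the Babuška–Brezzi theorem then yields the unique solution $(\bu,p)$ and the estimate \eqref{ee2}; alternatively, if one prefers to stay in 2D, one cites the continuous inf-sup result of \cite{BBD06, LL11} directly. I would present the lifting argument as the substance of the proof and relegate the continuity/coercivity bookkeeping to a sentence each.
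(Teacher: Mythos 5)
Your proposal is correct, and in fact it contains more than the paper does: the paper's entire proof of this proposition is a one-line citation, stating that the well-posedness of \eqref{eqn.4} is given in \cite{BBD06, book, LL11} and that \eqref{ee2} follows directly from it. This is exactly the ``cleanest route'' you identified at the end of your plan, so on that level you and the authors agree. What you add beyond the paper is a self-contained Babu\v{s}ka--Brezzi verification: continuity of $a$ and $b$ using the built-in control of $r^{-1}v_r$ in $H^1_-$, coercivity of $a$ on all of $X$ (which is indeed available here, since $a$ is, up to the factor $2\pi$, the restriction of the 3D Dirichlet form to axisymmetric fields with vanishing angular component, and the 3D Poincar\'e inequality on $H^1_0(\tilde\Omega)$ transfers through Proposition \ref{prop.1}), and the inf-sup condition obtained by lifting $q\in L^2_{1,0}(\Omega)$ to an axisymmetric mean-zero function on $\tilde\Omega$ and applying the 3D right inverse of the divergence. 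This is essentially how the cited references establish the weighted inf-sup condition, so your argument reconstructs what the paper outsources. The only point you gloss over is the claim that the Bogovskii-type lift is ``axisymmetric by construction'': a generic right inverse of $\operatorname{div}$ on $\tilde\Omega$ need not commute with rotations, so you should either average the lifted field $\tilde\bv$ over the rotation group (the average still has divergence $\tilde q$ because $\tilde q$ is axisymmetric, and its $H^1_0$ norm does not increase), discard the angular component (which does not affect the divergence of an axisymmetric field), or instead obtain $\tilde\bv$ by solving a 3D Stokes problem whose unique solution inherits axisymmetry from the data. With that one-line repair, your proof is complete and is a legitimate, more explicit alternative to the paper's citation.
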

\begin{proof}
The well-posedness of equation \eqref{eqn.4} is given in \cite{BBD06, book, LL11}. The estimates in  \eqref{ee2} follows directly from the well-posedness.
\end{proof}

\begin{remark}
Type I weighted spaces are suitable to formulate the well-posedness result \eqref{ee2}. The regularity of the solution, however,  is determined by  the geometry of the domain and the singular coefficients in the differential operator, which greatly impacts the effectiveness of the numerical approximation. The new space in Definition \ref{def.regular1} resembles those in \cite{Kondratiev67,BNZ205, LMN10, MR1896262,MR2209521,MR0467032, Nicaise97} for singular solutions of standard elliptic problems. The additional constraints and vanishing weights on the $z$-axis is due to the axisymmetry in the data. We will show that higher regularity estimates can be formulated in these spaces, regardless of the singularity in the solution. 
\end{remark}

\subsection{Some lemmas}

We distinguish the vertices on the $z$-axis and away from the $z$-axis as follows. Each vertex $Q$ on the $z$-axis will be denoted by  $Q^z$; each $Q$ away from the $z$-axis will be denoted by  $Q^r$. Recall the neighborhood $\maV:=B(Q, L/2)\cap\Omega$ of the vertex $Q$. For a vertex $Q^z$, we denote by $\tilde{\mathcal V}:=\mathcal V\times [0, 2\pi)\subset\tilde\Omega$ the rotation of its neighborhood $\maV$ about the $z$-axis. On $\maV$ or $\tilde\maV$, we consider the new coordinate system that is a simple translation of the old $rz$- (or $xyz$-) coordinate system, now with the vertex  at the origin. Meanwhile,   we set a local
polar coordinate system $(\rho, \theta)$ on $\mathcal V$, where $Q$ is  the
origin, such that 
\begin{eqnarray}\label{polar}
(r, z)=(\rho\sin\theta, \rho\cos\theta).
\end{eqnarray} 
Namely, $\rho$ and $\theta$ are also the radius and the elevation angle, respectively,  in the  spherical coordinates on $\tilde{\mathcal V}$.  Recall the following relation between the new Cartesian coordinates  and the spherical coordinates $(\rho, \phi, \theta)$ on $\tilde\maV$,
\begin{eqnarray}\label{eqn.cartesian}
x=\rho\cos\phi\sin\theta, \quad y=\rho\sin\phi\sin\theta, \quad z=\rho\cos\theta.
\end{eqnarray}

 Throughout the paper,  by $H'$, we mean the dual space of $H$. As in Definition \ref{def.regular}, we  also use the multi-index $\alpha=(\alpha_1, \alpha_2, \alpha_3)$ for a 3D domain, such that $|\alpha|=\alpha_1+\alpha_2+\alpha_3$ and $\pa^\alpha=\pa_x^{\alpha_1}\pa_y^{\alpha_2}\pa_z^{\alpha_3}$. For two multi-indices $\alpha$ and $\beta$, we define $\alpha-\beta:=(\alpha_1-\beta_1, \alpha_2-\beta_2, \alpha_3-\beta_3)$. By $\beta<\alpha$ (resp. $\beta\leq \alpha$), we mean $\beta_i<\alpha_i$ (resp. $\beta_i\leq \alpha_i$), $i=1,2,3$. The generic constant $C>0$ in our analysis below may be different at different occurrences. It will depend on the computational domain, but not on the functions involved in the estimates or the mesh level in the finite element algorithms.



The following two lemmas contain useful weighted estimates in usual Sobolev spaces in the 3D neighborhood $\tilde\maV$ of a vertex $Q^z$.

\begin{lemma}\label{lem2}
Let  $(\rho, \phi, \theta)$ be the spherical coordinates on $\tilde{\mathcal V}\subset\tilde\Omega$, the neighborhood of a  vertex $Q^z$, with  $Q^z$ as the origin. Suppose $\sum_{|\alpha|\leq m}\|\rho^{-a+|\alpha|}\pa^\alpha v\|_{L^2(\tilde{\mathcal V})}^2<\infty$, for $m\geq 0$ and $a\in  \mathbb R$. Then, for any $0\leq l \leq m$,
\begin{eqnarray}\label{eqn.lem21}
\|\rho^{-a+l} v\|^2_{H^l(\tilde{\mathcal V})}\leq C\sum_{|\alpha|\leq m}\|\rho^{-a+|\alpha|}\pa^\alpha v\|_{L^2(\tilde{\mathcal V})}^2.
\end{eqnarray}
\end{lemma}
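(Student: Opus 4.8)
The plan is to prove the weighted estimate \eqref{eqn.lem21} by induction on $m$, reducing everything to a scaling argument on dyadic annuli. First I would dispose of the trivial case $l=0$, which is just the term $|\alpha|=0$ on the right. For $l \geq 1$, the key observation is that multiplication by $\rho^{-a+l}$ interacts with differentiation in a controlled way: for any multi-index $\beta$ with $|\beta| = l$, the Leibniz rule gives
\begin{eqnarray*}
\pa^\beta(\rho^{-a+l} v) = \sum_{\gamma \leq \beta} \binom{\beta}{\gamma} \pa^{\beta-\gamma}(\rho^{-a+l}) \, \pa^\gamma v,
\end{eqnarray*}
and since $|\pa^{\beta-\gamma}(\rho^{-a+l})| \leq C \rho^{-a+l-|\beta-\gamma|} = C\rho^{-a+|\gamma|}$ near the vertex (each derivative of a power of $\rho$ lowers the exponent by one, and homogeneity is preserved), each summand is pointwise bounded by $C\rho^{-a+|\gamma|}|\pa^\gamma v|$ with $|\gamma| \leq l \leq m$. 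Thus $\|\rho^{-a+l}v\|^2_{H^l(\tilde\maV)}$, which involves $\pa^\beta(\rho^{-a+l}v)$ for $|\beta|\le l$ together with the lower-order terms $\pa^{\beta'}(\rho^{-a+l}v)$ for $|\beta'| < l$, is controlled once we also handle those lower-order pieces.

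The remaining point is the lower-order terms: $\|\rho^{-a+l}\pa^{\beta'}(\cdots)\|$ contributions reduce, after the same Leibniz expansion, to terms of the form $\|\rho^{-a+|\gamma|} \pa^\gamma v\|_{L^2(\tilde\maV)}$ with $|\gamma| < l$, but with an \emph{extra} positive power of $\rho$ (namely $\rho^{l - |\gamma|}$). Since $\rho$ is bounded on $\tilde\maV$ (it is a bounded neighborhood of the vertex), this extra factor only helps, so these terms are dominated by $C\sum_{|\alpha|\le m}\|\rho^{-a+|\alpha|}\pa^\alpha v\|^2_{L^2(\tilde\maV)}$ directly. This is the cleanest route: no induction is actually needed once one sees that every weighted derivative $\pa^\beta(\rho^{-a+l}v)$ with $|\beta| \le l$ expands into terms $\rho^{(\text{nonneg})}\cdot\rho^{-a+|\gamma|}\pa^\gamma v$ with $|\gamma| \le m$.

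I should be slightly careful about two technical points. The first is that $\rho$ is only comparable to $\vartheta$-type weights and smooth away from the vertex, but on $\tilde\maV = \maV \times [0,2\pi)$ with $Q^z$ at the origin, $\rho = |x|$ genuinely is the Euclidean distance and the bounds $|\pa^{\beta-\gamma}\rho^{-a+l}| \leq C\rho^{-a+|\gamma|}$ hold on all of $\tilde\maV \setminus \{0\}$, which is where $v$ and its derivatives live in the $L^2$ sense; the origin is a single point and contributes nothing. The second is ensuring the hypothesis $\sum_{|\alpha|\le m}\|\rho^{-a+|\alpha|}\pa^\alpha v\|^2_{L^2(\tilde\maV)} < \infty$ really does make all the intermediate expressions finite — but this is automatic from the pointwise domination just described. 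The main obstacle, such as it is, is purely bookkeeping: organizing the Leibniz expansion so that each resulting term is visibly of the form (bounded power of $\rho$) times (a term appearing on the right-hand side), and confirming the constant $C$ depends only on $m$, $a$, and the geometry of $\tilde\maV$ (through $\mathrm{diam}(\tilde\maV)$ and the implicit constants in the derivative bounds for powers of $\rho$), not on $v$.
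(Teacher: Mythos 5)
Your argument is correct and is essentially the paper's own proof: the paper likewise expands $\pa^{\beta'}(\rho^{-a+l}v)$ via the product rule (using $\pa_{x_i}(\rho^\nu v)=\nu x_i\rho^{\nu-2}v+\rho^\nu\pa_{x_i}v$ and $|x|,|y|,|z|\le\rho$ in place of your homogeneity bound $|\pa^{\beta-\gamma}\rho^{-a+l}|\le C\rho^{-a+l-|\beta-\gamma|}$), observes that lower-order derivatives only produce extra nonnegative powers of $\rho$, which are harmless on the bounded neighborhood $\tilde{\maV}$, and sums over all $\beta'$ with $|\beta'|\le l$ — no induction, exactly as you say. The only nitpick is bookkeeping: for $|\beta'|<l$ the surplus factor is $\rho^{\,l-|\beta'|}$ rather than $\rho^{\,l-|\gamma|}$, but since it is nonnegative in either accounting this does not affect the conclusion.
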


\begin{proof}
Note that for any $\nu\in \mathbb R$, 
\begin{eqnarray}\label{eqn.derivative}
\pa_{x_i}(\rho^{\nu}  v)=\nu x_i\rho^{\nu-2}  v+\rho^\nu\pa_{x_i} v,\quad {\rm{where}}\ x_i=x, y,\ {\rm{or}}\ z.
\end{eqnarray} 
 For $0\leq l' \leq l$, let   $\beta$ and $\beta'$ be two nonnegative integer multi-indices, such that $|\beta'|=l'$. Using the triangle inequality and \eqref{eqn.derivative}, we have
\begin{eqnarray*}
   \| \pa^{\beta'} \rho^{-a+l}  v\|_{L^2(\tilde{\maV})} &\leq& C\sum_{\alpha\leq \beta'}  \sum_{ \beta \leq \beta'-\alpha}\|x^{\beta_1}y^{\beta_2}z^{\beta_3}\rho^{-a+l-l'+|\alpha|-|\beta|}\pa^\alpha v\|_{L^2(\tilde{\maV})}\\
&\leq&  C\sum_{|\alpha| \leq l'}  \|\rho^{-a+l-l'+|\alpha|}\pa^\alpha v\|_{L^2(\tilde{\maV})} \leq C\sum_{|\alpha|\leq m}\|\rho^{-a+|\alpha|}\pa^\alpha v\|_{L^2(\tilde{\mathcal V})},
\end{eqnarray*}
where we used the relations between the spherical coordinates and the Cartesian coordinates in \eqref{eqn.cartesian}.

Then, we  have
\begin{eqnarray*}
 \| \pa^{\beta'} \rho^{-a+l} v\|_{L^2(\tilde{\maV})}^2
 \leq  C\sum_{|\alpha|\leq m}  \|\rho^{-a+|\alpha|}\pa^\alpha v\|^2_{L^2(\tilde{\maV})}.
\end{eqnarray*}
Summing up over all the possible $\beta'$'s, we have proved the estimate \eqref{eqn.lem21}. 
\end{proof}

\begin{lemma}\label{lem211}
Let  $a\in \mathbb R$ and the integer $0\leq l\leq m$. Let $\tilde{\mathcal V}$ be the neighborhood of a vertex $Q^z$ and let $(\rho,\phi, \theta)$ be its local spherical coordinates as defined in Lemma \ref{lem2}. 
Then,
\begin{eqnarray}\label{eqn.lem211}
\sum_{|\alpha|\leq m}\|\rho^{-a+|\alpha|}\pa^\alpha  v\|^2_{L^2(\tilde\maV)}\leq C\sum_{l\leq m}|\rho^{-a+l} v|_{H^l(\tilde\maV)}^2.
\end{eqnarray}
\end{lemma}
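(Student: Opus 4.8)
The plan is to prove \eqref{eqn.lem211} by a direct induction on $m$, reversing the estimate of Lemma \ref{lem2}. The base case $m=0$ is trivial: both sides equal $\|\rho^{-a}v\|_{L^2(\tilde\maV)}^2$. For the inductive step, I would isolate the terms with $|\alpha|=m$ on the left-hand side and show
\[
\sum_{|\alpha|=m}\|\rho^{-a+m}\pa^\alpha v\|_{L^2(\tilde\maV)}^2 \leq C\,|\rho^{-a+m}v|^2_{H^m(\tilde\maV)} + C\sum_{|\alpha|\leq m-1}\|\rho^{-a+|\alpha|}\pa^\alpha v\|_{L^2(\tilde\maV)}^2,
\]
after which the induction hypothesis absorbs the lower-order sum into $\sum_{l\leq m-1}|\rho^{-a+l}v|^2_{H^l(\tilde\maV)}$ and the proof is complete.

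The key computation is to reverse formula \eqref{eqn.derivative}. Writing $w=\rho^{-a+m}v$, one has $\pa^\beta w$ for $|\beta|=m$ as a sum of $\rho^{-a+m}\pa^\beta v$ plus terms in which at least one derivative falls on the weight $\rho^{-a+m}$. Each such term, by the product rule and \eqref{eqn.derivative}, is a linear combination of expressions $x^{\gamma}\rho^{-a+m-|\beta|+|\alpha|-|\gamma|}\pa^\alpha v$ with $|\alpha|<m$, and since $|x^\gamma|\leq C\rho^{|\gamma|}$ on $\tilde\maV$ these are bounded by $C\rho^{-a+|\alpha|}\pa^\alpha v$ with $|\alpha|\le m-1$. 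Hence $\rho^{-a+m}\pa^\beta v = \pa^\beta w - (\text{lower-order terms})$, and taking $L^2(\tilde\maV)$ norms and summing over $|\beta|=m$ gives exactly the displayed inequality, with $\sum_{|\beta|=m}\|\pa^\beta w\|_{L^2}^2 = |\rho^{-a+m}v|_{H^m(\tilde\maV)}^2$.

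The one point that needs a little care — and is the main (mild) obstacle — is the bookkeeping of which weighted quantities actually appear after applying the product rule to $\pa^\beta(\rho^{-a+m}v)$: one must check that every term other than the principal one $\rho^{-a+m}\pa^\beta v$ genuinely has strictly fewer than $m$ derivatives on $v$, so that it is controlled by the inductive hypothesis rather than circularly by the $|\alpha|=m$ piece we are trying to bound. This follows because each differentiation of the factor $\rho^{-a+m}$ lowers the $v$-derivative count by one while the power of $\rho$ and the monomial $x^\gamma$ adjust so that the total homogeneity degree in $\rho$ is preserved (the scaling weight $-a+|\alpha|$ matches the derivative order $|\alpha|$, exactly as in the hypothesis); the estimate $|x^{\gamma}|\le \rho^{|\gamma|}$ on $\tilde\maV$ then makes the power of $\rho$ come out right. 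No blow-up of the weight occurs at $\rho=0$ because we only ever trade the weight downward in tandem with a monomial in $x,y,z$. Once this is verified the inequality \eqref{eqn.lem211} follows, and summing over all admissible $\beta$ produces the stated constant $C$ depending only on $m$ and $\tilde\maV$.
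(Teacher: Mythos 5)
Your proposal is correct and follows essentially the same route as the paper: induction on $m$, with the inductive step obtained by writing $\rho^{-a+m}\pa^\beta v$ as $\pa^\beta(\rho^{-a+m}v)$ minus Leibniz terms in which at least one derivative hits the weight, bounding the resulting monomials via $|x|,|y|,|z|\le\rho$ (i.e.\ \eqref{eqn.cartesian}) so they are controlled by $\rho^{-a+|\alpha|}\pa^\alpha v$ with $|\alpha|\le m-1$, and then absorbing these by the induction hypothesis. The bookkeeping point you flag is exactly the content of the paper's displayed estimate, so no gap remains.
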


\begin{proof}

We prove it by induction. For $m=0$, 
$$
\|\rho^{-a}v\|^2_{L^2(\tilde\maV)}=\int_{\tilde\maV}\rho^{-2a} {v}^2dxdydz=|\rho^{-a}  v|^2_{H^0(\tilde\maV)}.
$$

Assume \eqref{eqn.lem211} holds for $m\geq 0$. We now prove for $m+1$. Let   $\beta$ and $\beta'$ be two nonnegative integer multi-indices. Then, for any $\alpha$ such that $|\alpha|=m+1$, by  \eqref{eqn.derivative} and the triangle inequality, we first have,
\begin{eqnarray*}
\|\rho^{-a+m+1}\pa^{\alpha}v\|_{L^2(\tilde \maV)}&\leq& \|\pa^{\alpha}(\rho^{-a+m+1}v)\|_{L^2(\tilde\maV)}\\
&&+C\sum_{|\beta'|\leq m}^{\beta'\leq \alpha}\sum_{\beta\leq\alpha-\beta'}\|x^{\beta_1}y^{\beta_2}z^{\beta_3}\rho^{-a+|\beta'|-|\beta|}\pa^{\beta'} v\|_{L^2(\tilde\maV)}\\
&\leq&  \|\pa^{\alpha}(\rho^{-a+m+1}v)\|_{L^2(\tilde\maV)}+C\sum_{|\beta'|\leq m}\|\rho^{-a+|\beta'|}\pa^{\beta'} v\|_{L^2(\tilde\maV)},
\end{eqnarray*}
where we also used the relations in \eqref{eqn.cartesian} in the last step.
Therefore, 
$$
\|\rho^{-a+m+1}\pa^{\alpha}v\|^2_{L^2(\tilde \maV)}\leq C(\|\pa^{\alpha}(\rho^{-a+m+1} v)\|_{L^2(\tilde \maV)}^2+\sum_{|\beta'|\leq m}\|\rho^{-a+|\beta'|}\pa^{\beta'} v\|_{L^2(\tilde \maV)}^2).
$$

Due to the assumption, \eqref{eqn.lem211} holds for $m$. Then, summing over all the possible $\alpha$'s, we therefore have 
$$
\sum_{|\alpha|=m+1}\|\rho^{-a+m+1}\pa^{\alpha}v\|^2_{L^2(\tilde \maV)}\leq C(|\rho^{-a+m+1}v|_{H^{m+1}(\tilde\maV)}^2+\sum_{l\leq m}|\rho^{-a+l}v|_{H^l(\tilde{\mathcal V})}^2).
$$
This proves \eqref{eqn.lem211} for $m+1$.
\end{proof}

Recall the multi-index $\alpha=(\alpha_1, \alpha_2)$ and the notation $\pa_c^\alpha$ from Definition \ref{def.regular}. Then, the following two lemmas concern the connection between the two types of weighted spaces in the 2D neighborhood $\maV$ in the $rz$-plane of a vertex $Q^z$.

\begin{lemma}\label{lem4}
As defined in \eqref{polar}, let  $(\rho,  \theta)$ be the polar coordinates on ${\mathcal V}\subset \Omega$, the neighborhood of a  vertex $Q^z$. Suppose $v\in\maK^m_{a,1 }(\maV)$. Then, for $0\leq l \leq m$ and $a\in\mathbb R$,
\begin{eqnarray}\label{eqn.lem24}
\|\rho^{-a+l} v\|^2_{H^l_1({\mathcal V})}\leq C\|v\|_{\maK_{a, 1}^m({\mathcal V})}^2.
\end{eqnarray}
\end{lemma}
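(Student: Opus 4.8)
The plan is to reduce the weighted estimate in $H^l_1(\maV)$ to the corresponding 3D estimate in $H^l(\tilde\maV)$ from Lemma \ref{lem2}, exploiting the fact that on $\maV$ the variable $r$ plays the role of the distance to the $z$-axis, so that $L^2_1(\maV)$-norms are (up to the $2\pi$ from integrating over the angle $\phi$) the same as $L^2(\tilde\maV)$-norms of axisymmetric functions. Concretely, since $\int_\maV w^2\,r\,dr\,dz=\tfrac1{2\pi}\int_{\tilde\maV}w^2\,dx\,dy\,dz$ for any axisymmetric $w$, it suffices to show that $v\in\maK^m_{a,1}(\maV)$, when viewed as an axisymmetric function $\tilde v$ on $\tilde\maV$, satisfies the hypothesis $\sum_{|\alpha|\le m}\|\rho^{-a+|\alpha|}\pa^\alpha\tilde v\|^2_{L^2(\tilde\maV)}<\infty$ of Lemma \ref{lem2}; then \eqref{eqn.lem21} gives $\|\rho^{-a+l}\tilde v\|^2_{H^l(\tilde\maV)}\le C\sum_{|\alpha|\le m}\|\rho^{-a+|\alpha|}\pa^\alpha\tilde v\|^2_{L^2(\tilde\maV)}$, and translating the right-hand side back to the $rz$-plane and restricting the left-hand side to the $\pa_c$-derivatives yields \eqref{eqn.lem24}.

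First I would set up the dictionary between 2D and 3D derivatives. For an axisymmetric function $\tilde v(x,y,z)=v(r,z)$ one has, by the chain rule, $\pa_x\tilde v=\tfrac{x}{r}\,\pa_r v$, $\pa_y\tilde v=\tfrac{y}{r}\,\pa_r v$, $\pa_z\tilde v=\pa_z v$, and iterating, for any 3D multi-index $\alpha$ with $|\alpha|=k$, $\pa^\alpha\tilde v$ is a finite sum of terms of the form $(\text{bounded function of }x/r,y/r)\cdot r^{-j}\pa_r^{i}\pa_z^{\alpha_3}v$ with $0\le j\le i\le k-\alpha_3$ — the negative powers of $r$ arising exactly from differentiating the angular factors $x/r$, $y/r$. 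Since $\rho\ge r$ on $\maV$ and $\rho^{-a+k}r^{-j}=\rho^{-a+k-j}(r/\rho)^{-j}$ with $r/\rho=\sin\theta\in(0,1]$, the factor $(r/\rho)^{-j}$ is not bounded; this is where the side conditions must be used.

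Here I would invoke the definition of $\maK^m_{a,1}(\maV)$ and, crucially, bound the mixed weighted terms $\|\rho^{-a+l}r^{-j}\pa_r^i\pa_z^{\alpha_3}v\|_{L^2_1(\maV)}$ by the pure $\maK$-norm. The key is a Hardy-type inequality in the $r$-variable: the condition $\vartheta^{-a+|\beta|}\pa_c^\beta v\in L^2_1(\maV)$ for all $|\beta|\le m$, together with the vanishing of appropriate $r$-derivatives of $v$ on $\{r=0\}$ that is forced by membership in $L^2_1$ with a negative power of $r$ absorbed (cf. the discussion after Proposition \ref{prop.1}, and the trace-zero consequence of $r^{-1}w\in L^2_1$ from \cite{ACL02}), allows one to write $r^{-1}(\pa_r^i\pa_z^{\alpha_3}v)$ in terms of $\pa_r^{i+1}\pa_z^{\alpha_3}v$ via $r^{-1}g(r)=r^{-1}\int_0^r\pa_sg\,ds$ when $g(0)=0$, and then a one-dimensional Hardy inequality $\int_0^\infty (r^{-1}\int_0^r\pa_sg)^2 r\,dr\le C\int_0^\infty(\pa_rg)^2 r\,dr$ controls the negative power. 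Iterating this $j$ times converts each $r^{-j}\pa_r^i\pa_z^{\alpha_3}v$ into a combination of $\pa_r^{i'}\pa_z^{\alpha_3}v$ with $i'\le i+j\le |\alpha|$, each with the correct compensating weight $\rho^{-a+|\alpha|}$ (using again $\rho\simeq\sqrt{r^2+z^2}$ and that the weight powers match), landing inside $\|v\|_{\maK^m_{a,1}(\maV)}$.

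The main obstacle I expect is precisely this last point: carefully justifying the Hardy inequalities near $\{r=0\}$, i.e.\ verifying that the functions to which Hardy is applied vanish to the required order on the $z$-axis so that the boundary terms in the integration by parts drop out. This is the analytic heart of the lemma; the rest — the chain-rule bookkeeping, the change of variables $dx\,dy\,dz\leftrightarrow 2\pi\,r\,dr\,dz$, summing over multi-indices $\alpha$ and over $l$, and discarding the extra 3D derivatives on the left to recover the 2D seminorm — is routine. Once the hypothesis of Lemma \ref{lem2} is established with a bound by $C\|v\|^2_{\maK^m_{a,1}(\maV)}$, the conclusion \eqref{eqn.lem24} follows immediately by restricting \eqref{eqn.lem21} to $\pa_c^\beta$-derivatives and rewriting the 3D $L^2$-norms as $L^2_1$-norms.
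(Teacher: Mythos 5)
There is a genuine gap, and it comes from routing the argument through 3D. The inequality \eqref{eqn.lem24} is a purely two-dimensional statement: both sides involve only the derivatives $\pa_c^\alpha=\pa_r^{\alpha_1}\pa_z^{\alpha_2}$ with the measure $r\,dr\,dz$. The paper proves it directly by the product rule in the $(r,z)$ variables, using $\pa_{x_i}(\rho^\nu v)=\nu x_i\rho^{\nu-2}v+\rho^\nu\pa_{x_i}v$ with $x_i=r$ or $z$ and the bounds $|r|,|z|\le\rho$; no negative powers of $r$ ever appear, so no Hardy inequality and no conditions on the $z$-axis are needed, and the estimate holds for every $v\in\maK^m_{a,1}(\maV)$. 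Your reduction to Lemma \ref{lem2} instead requires verifying its hypothesis $\sum_{|\alpha|\le m}\|\rho^{-a+|\alpha|}\pa^\alpha\tilde v\|^2_{L^2(\tilde\maV)}<\infty$ for the 3D Cartesian derivatives of the axisymmetric extension $\tilde v$, and this is where the argument breaks: those derivatives produce terms like $r^{-j}\pa_r^i\pa_z^{\alpha_3}v$, and membership in $\maK^m_{a,1}(\maV)$ gives no control of negative powers of $r$. The vanishing conditions on $\Gamma_0$ that your Hardy step needs are exactly the extra constraints defining the subspaces $\maK^m_{a,\pm}(\Omega)$ (cf.\ \eqref{eqn.ss2}--\eqref{eqn.ss1} and Proposition \ref{prop.1}); they are not part of $\maK^m_{a,1}$, which is the only hypothesis in the lemma. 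Concretely, $v(r,z)=r$ lies in $\maK^m_{a,1}(\maV)$ for suitable $a$, yet $\tilde v=\sqrt{x^2+y^2}$ has $\pa_x^2\tilde v=y^2/r^3\notin L^2$ near the axis, so the intermediate 3D bound you want to establish is simply false for general $v\in\maK^m_{a,1}(\maV)$. (In the paper, the 2D-to-3D transfer is indeed performed in the proof of Theorem \ref{thm.stokesreg}, but only for data in the $\pm$ spaces and via Lemma \ref{lem211} combined with Proposition \ref{prop.1}; Lemma \ref{lem4} itself is the purely 2D ingredient and must not presuppose that transfer.)

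The fix is to abandon the 3D detour: expand $\pa_c^\alpha(\rho^{-a+l}v)$ by iterating the identity above, bound the coefficients $r^{\beta_1}z^{\beta_2}\rho^{-\,|\beta|}$ by constants using $\rho=\sqrt{r^2+z^2}$, absorb the resulting weights into $\rho^{-a+|\beta|}\pa_c^\beta v$, and sum over $|\alpha|\le l$ and $0\le l\le m$. Your final bookkeeping step (restricting to $\pa_c$-derivatives and converting $L^2$ to $L^2_1$ norms) is fine, but the heart of your plan --- the Hardy inequalities near $\{r=0\}$ --- cannot be justified under the stated hypotheses and is also unnecessary.
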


\begin{proof}
Note that for any $\nu\in \mathbb R$, 
\begin{eqnarray}\label{eqn.deriv}
\pa_{x_i}(\rho^{\nu}  v)=\nu x_i\rho^{\nu-2} v+\rho^\nu\pa_{x_i} v,\quad {\rm{where}}\ x_i=r\  {\rm{or}}\ z. 
\end{eqnarray}
For $0\leq l' \leq l$, let   $\alpha$,  $\beta$, and $\beta'$ be three multi-indices, such that $|\alpha|=l'$. Using the triangle inequality and \eqref{eqn.deriv}, we have
\begin{eqnarray*}
   \| \partial^\alpha_c (\rho^{-a+l}  v)\|_{L_1^2({\maV})} &\leq& C\sum_{ \beta\leq \alpha}  \sum_{ \beta'\leq \alpha-\beta}\|r^{\beta_1'}z^{\beta_2'}\rho^{-a+l-l'+|\beta|-|\beta'|}\pa_c^\beta v\|_{L_1^2({\maV})}\\
&\leq&  C\sum_{|\beta|\leq l'}  \|\rho^{-a+l-l'+|\beta|}\pa_c^\beta v\|_{L^2_1({\maV})}\\
&\leq& C\sum_{|\beta|\leq m}\|\rho^{-a+|\beta|}\pa_c^\beta v\|_{L^2_1({\mathcal V})},
\end{eqnarray*}
where we also used the relations in \eqref{polar}. Therefore,
\begin{eqnarray*}
 \| \pa_c^\alpha (\rho^{-a+l} v)\|_{L^2_1({\maV})}^2
 \leq  C\|v\|^2_{\maK^m_{a,1}({\maV})}.
\end{eqnarray*}
Summing up over all the possible $\alpha$'s and $l'$'s, we have proved the estimate \eqref{eqn.lem24}. 
\end{proof}

\begin{lemma}\label{lem3}
Let  $a\in \mathbb R$ and the integer $0\leq l\leq m$. Let $\mathcal V$ be the neighborhood of a vertex $Q^z$ and $(\rho, \theta)$ be the polar coordinates on $\maV$ as in Lemma \ref{lem4}. Then,
\begin{eqnarray}\label{eqn.lem3}
\|v\|^2_{\maK^{m}_{a, 1}(\maV)}\leq C\sum_{l\leq m}|\rho^{-a+l}v|_{H^l_1(\mathcal V)}^2.
\end{eqnarray}
\end{lemma}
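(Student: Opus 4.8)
The statement is the reverse inequality to Lemma \ref{lem4}: I want to bound $\|v\|^2_{\maK^m_{a,1}(\maV)} = \sum_{|\beta|\le m}\|\rho^{-a+|\beta|}\pa_c^\beta v\|^2_{L^2_1(\maV)}$ by the sum of weighted Sobolev seminorms $\sum_{l\le m}|\rho^{-a+l}v|^2_{H^l_1(\maV)}$. The natural approach is induction on $m$, mirroring the proof of Lemma \ref{lem211} but now in the 2D $rz$-plane with the weight $r\,dr\,dz$ and the $\pa_c$-derivatives. The base case $m=0$ is the trivial identity $\|\rho^{-a}v\|^2_{L^2_1(\maV)} = |\rho^{-a}v|^2_{H^0_1(\maV)}$.

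For the inductive step, assume the estimate holds for $m$ and consider $|\alpha| = m+1$. The key computation is to invert the product rule \eqref{eqn.deriv}: write $\rho^{-a+m+1}\pa_c^\alpha v = \pa_c^\alpha(\rho^{-a+m+1}v) - (\text{lower order terms})$, where each correction term, by the Leibniz expansion and \eqref{eqn.deriv} applied repeatedly, has the form $r^{\beta_1'}z^{\beta_2'}\rho^{-a+|\beta|-|\beta'|}\pa_c^\beta v$ with $|\beta|\le m$ and $|\beta'|\ge 1$. Since $|r^{\beta_1'}z^{\beta_2'}|\le \rho^{|\beta'|}$ on $\maV$ by \eqref{polar}, each such term is controlled by $\rho^{-a+|\beta|}\pa_c^\beta v$ in $L^2_1(\maV)$. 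Thus by the triangle inequality
\begin{eqnarray*}
\|\rho^{-a+m+1}\pa_c^\alpha v\|^2_{L^2_1(\maV)} \le C\Big(\|\pa_c^\alpha(\rho^{-a+m+1}v)\|^2_{L^2_1(\maV)} + \sum_{|\beta|\le m}\|\rho^{-a+|\beta|}\pa_c^\beta v\|^2_{L^2_1(\maV)}\Big).
\end{eqnarray*}
Summing over all $\alpha$ with $|\alpha|=m+1$, the first group of terms sums to $C|\rho^{-a+m+1}v|^2_{H^{m+1}_1(\maV)}$, and the second group is $C\|v\|^2_{\maK^m_{a,1}(\maV)}$, which by the induction hypothesis is $\le C\sum_{l\le m}|\rho^{-a+l}v|^2_{H^l_1(\maV)}$. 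Combining gives $\|v\|^2_{\maK^{m+1}_{a,1}(\maV)}\le C\sum_{l\le m+1}|\rho^{-a+l}v|^2_{H^l_1(\maV)}$, completing the induction.

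The only subtlety — and the point deserving care — is the behavior near the $z$-axis $\{r=0\}$, where the measure $r\,dr\,dz$ degenerates: all the manipulations above take place under the $L^2_1$ norm, so one must verify that the Leibniz expansion and the bound $|r^{\beta_1'}z^{\beta_2'}|\le\rho^{|\beta'|}$ are used only inside integrals against $r\,dr\,dz$, which they are, so no extra hypothesis on the vanishing of $v$ on $\{r=0\}$ is needed for this particular inequality (such conditions enter only for the $H^m_\pm$ refinements, not for $H^m_1$ and $\maK^m_{a,1}$). I would also note that the local polar coordinate bound $\rho\asymp |x-Q^z|$ together with $\vartheta = |x-Q^z|$ on $\maV$ makes $\rho$ and $\vartheta$ interchangeable up to constants, so this lemma is exactly the localized statement matching the definition of $\maK^m_{\mu,1}$; no genuine obstacle is expected beyond bookkeeping of the multi-index sums.
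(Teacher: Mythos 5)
Your proposal is correct and follows essentially the same route as the paper: induction on $m$ with base case the $L^2_1$ identity, then for $|\alpha|=m+1$ the Leibniz expansion of $\pa_c^\alpha(\rho^{-a+m+1}v)$ via \eqref{eqn.deriv}, the bound of the monomials $r^{\beta_1'}z^{\beta_2'}$ by $\rho^{|\beta'|}$ from \eqref{polar}, the triangle inequality, and the induction hypothesis after summing over $\alpha$. The closing remarks about the degenerate measure and the interchangeability of $\rho$ and $\vartheta$ are accurate but not needed beyond what the paper's argument already uses.
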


\begin{proof}

We prove it by induction. For $m=0$, 
$$
\|v\|^2_{\maK^0_{a, 1}(\maV)}=\int_{\maV}\rho^{-2a} v^2rdrdz=\|\rho^{-a} v\|^2_{H^0_1(\maV)}.
$$

Assume \eqref{eqn.lem3} holds for $m\geq 0$. 
We now prove for $m+1$. Let   $\alpha$,  $\beta$, and $\beta'$ be three multi-indices, such that $|\alpha|=m+1$. Then, using \eqref{eqn.deriv} and the triangle inequality, we first have 
\begin{eqnarray*}
\|\rho^{-a+m+1}\pa_c^\alpha v\|_{L^2_1(\maV)}&\leq& \|\pa_c^\alpha(\rho^{-a+m+1}v)\|_{L^2_1(\maV)}\\
&&+C\sum_{\beta'\leq \alpha}^{|\beta'|\leq m}\sum_{\beta\leq \alpha-\beta'}\|r^{\beta_1}z^{\beta_2}\rho^{-a+|\beta'|-|\beta|}\pa_c^{\beta'}v\|_{L^2_1(\maV)}\\
&\leq&  \|\pa_c^{\alpha}(\rho^{\-a+m+1}v)\|_{L^2_1(\maV)}+C\sum_{|\beta'|\leq m}\|\rho^{-a+|\beta'|}\pa_c^{\beta'}v\|_{L^2_1(\maV)},
\end{eqnarray*}
where we also used the relations in \eqref{polar} in the last step.
Therefore, 
$$
\|\rho^{-a+m+1}\pa_c^{\alpha}v\|^2_{L^2_1(\maV)}\leq C(\|\pa_c^\alpha(\rho^{-a+m+1}v)\|_{L^2_1(\maV)}^2+\sum_{|\beta'|\leq m}\|\rho^{-a+|\beta'|}\pa_c^{\beta'}v\|_{L^2_1(\maV)}^2).
$$

Due to the assumption, \eqref{eqn.lem3} holds for $m$. Summing over all the possible $\alpha$'s, we therefore have 
$$
|v|^2_{\maK^{m+1}_{a, 1}(\maV)}\leq C(|\rho^{-a+m+1}v|_{H^{m+1}_1(\maV)}^2+\sum_{l\leq m}|\rho^{-a+l}v|_{H^l_1(\mathcal V)}^2).
$$
This, together with the assumption, completes the proof.
\end{proof}

\section{Regularity estimates} \label{sec3}

We here summarize our regularity estimates for possible singular solutions of the axisymmetric Stokes equation \eqref{eqn.5} in weighted Sobolev spaces. We shall also show the calculation of the index $\eta$, such that the solution does not lose regularity in these spaces.

\subsection{Local estimates}

The first estimate concerns the local behavior of the solution of the axisymmetric Stokes equation \eqref{eqn.5} in the neighborhood of a vertex away from the $z$-axis.

\begin{lemma}\label{3.2}
In the neighhood $\maV$ of a vertex $Q^r$ away from the $z$-axis, the solution $\bu=(u_r, u_z)$ satisfies
\begin{eqnarray*}
\|\vartheta^{-1}u_r\|_{L^2_1(\maV)}\leq C\|u_r\|_{H^1_{-}(\maV)},\qquad \|\vartheta^{-1}u_z\|_{L^2_1(\maV)}\leq C\|u_z\|_{H^1_{+}(\maV)},
\end{eqnarray*}
where $\vartheta$ is the function in Definition \ref{def.regular1}. 
\end{lemma}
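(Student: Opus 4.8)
The plan is to reduce both estimates to a standard Hardy-type inequality, using the fact that near a vertex $Q^r$ away from the $z$-axis the weight $\vartheta$ behaves like the distance to the point $Q^r$, while the axisymmetric weight $r$ is bounded above and below by positive constants on $\maV$ (since $Q^r \notin \{r=0\}$ and $\maV = B(Q^r, L/2)\cap\Omega$ with $L$ no larger than the distance from $Q^r$ to the $z$-axis). First I would observe that on $\maV$ there are constants $0 < c_1 \le r \le c_2$, so that for any measurable $v$,
\begin{eqnarray*}
c_1\int_{\maV}\vartheta^{-2}v^2\,drdz \le \int_{\maV}\vartheta^{-2}v^2\,r\,drdz \le c_2\int_{\maV}\vartheta^{-2}v^2\,drdz,
\end{eqnarray*}
and similarly the $H^1_-$ and $H^1_+$ norms on $\maV$ are equivalent (up to constants depending on $c_1,c_2$) to the ordinary $H^1(\maV)$ norm. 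Thus it suffices to prove, for $v = u_r$ or $v = u_z$, the unweighted Hardy estimate $\|\vartheta^{-1}v\|_{L^2(\maV)} \le C\|v\|_{H^1(\maV)}$, given that $v$ vanishes on the part of $\pa\maV$ lying on $\Gamma$.

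The second step is to recall why $u_r$ and $u_z$ have the boundary behaviour that makes the Hardy inequality applicable. By Proposition \ref{lem.normal} the solution satisfies $(u_r,u_z) = (0,0)$ on $\Gamma$, and since $Q^r$ is a genuine vertex of $\Omega$, a full neighbourhood of $Q^r$ in $\pa\Omega$ lies in $\Gamma$; hence both components vanish on $\pa\Omega\cap\maV$. One can then invoke the standard weighted Poincaré/Hardy inequality on a plane sector (or on a Lipschitz domain with a marked boundary point): if $w\in H^1(\maV)$ vanishes on $\pa\maV\cap\pa\Omega$ near $Q^r$, then $\operatorname{dist}(\cdot,Q^r)^{-1}w\in L^2(\maV)$ with norm controlled by $\|w\|_{H^1(\maV)}$. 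Replacing $\operatorname{dist}(\cdot,Q^r)$ by $\vartheta$ (legitimate since they agree on $\maV_i$ and $\vartheta\ge L/2$ elsewhere) and re-inserting the weight $r$ via the equivalence above yields exactly the two claimed inequalities.

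The main obstacle, and the only point requiring care, is the Hardy inequality itself at the vertex: one must be sure that the homogeneous boundary condition is strong enough to control the singular weight $\vartheta^{-1}$. This is where the hypothesis that $Q^r$ is an actual corner of $\Omega$ enters — if $Q^r$ were a point in the interior of an edge the inequality could fail, but at a vertex the interior opening angle is strictly less than $2\pi$ and $w$ vanishes on two adjacent edges (or at least on the portion of $\pa\Omega$ meeting $\maV$), which suffices. A clean way to present it is to work in the local polar coordinates $(\rho,\theta)$ centred at $Q^r$, write $w(\rho,\theta) = \int_0^\rho \pa_s w(s,\theta)\,ds$ using the vanishing trace, apply Cauchy--Schwarz in $s$, and integrate $\rho^{-2}w^2$ against $\rho\,d\rho\,d\theta$; the $\rho^{-2}$ is absorbed and one is left with $\|\nabla w\|_{L^2(\maV)}^2$ up to a constant. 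Everything else is the routine weight-equivalence bookkeeping described above.
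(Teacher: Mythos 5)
Your overall route is the same as the paper's: observe that on $\maV$ the weight $r$ is bounded above and below by positive constants, so $L^2_1$, $H^1_-$, $H^1_+$ are equivalent to the unweighted $L^2$, $H^1$ there, and then reduce both estimates to the single unweighted bound $\|\rho^{-1}v\|_{L^2(\maV)}\le C\|v\|_{H^1(\maV)}$ for $v$ vanishing on $\pa\Omega$ near the vertex $Q^r$, where $\rho=\operatorname{dist}(\cdot,Q^r)$. The paper does exactly this and then simply cites the standard local Poincar\'e/Hardy inequality at a corner; your invocation of that same standard inequality is fine, as is the bookkeeping with $\vartheta$ versus $\rho$ and the remark that near $Q^r$ the whole adjacent boundary lies in $\Gamma$, so both $u_r$ and $u_z$ have zero trace there.

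The one point to correct is your ``clean way to present it.'' The radial fundamental-theorem argument $w(\rho,\theta)=\int_0^\rho\pa_s w(s,\theta)\,ds$ does not work here: it presupposes a pointwise value $w(0,\theta)=0$ at the vertex, which is not meaningful for an $H^1$ function in two dimensions (and is not implied by the trace condition on the edges), and even formally the Cauchy--Schwarz step produces $\int_0^\rho(\pa_s w)^2\,ds$ without the Jacobian weight $s$, which is not dominated by $\int(\pa_s w)^2 s\,ds\,d\theta=\|\pa_\rho w\|^2_{L^2}$ near $\rho=0$; this is precisely the well-known failure of the critical Hardy inequality in 2D absent lateral boundary vanishing. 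The correct elementary proof uses the vanishing of $w$ on the two edges meeting at $Q^r$: for each fixed $\rho$ apply the one-dimensional Poincar\'e inequality in the angular variable on the arc, $\int w(\rho,\theta)^2\,d\theta\le C\int(\pa_\theta w)^2\,d\theta$, and then integrate against $\rho^{-2}\,\rho\,d\rho$, using $|\nabla w|^2\ge\rho^{-2}(\pa_\theta w)^2$ (this is also the mechanism behind the references the paper cites, and it is the 2D analogue of the spherical-cap argument the paper itself uses in Lemma~\ref{3.3}). With that substitution your proof is complete and matches the paper's.
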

\begin{proof}

 On $\maV$, both $H^1_{+}$ and $H^1_-$ (resp. $L^2_1$) are equivalent to the usual Sobolev space $H^1$ (resp. $L^2$), since $r$ is bounded away from $0$. Therefore, it suffices to show for any $v\in H^1(\maV)\cap \{v|_{\Gamma}=0\}$, 
\begin{eqnarray}\label{above}
\|\rho^{-1} v\|_{L^2(\maV)}\leq C\|v\|_{H^1(\maV)},
\end{eqnarray}
where $\rho$ is the distance to $Q^r$. However, the estimate in \eqref{above} is well known based on a local Poincar\'e inequality. See \cite{KMR01, KMR197,LN09, BNZ107}. 
\end{proof}

We now have the following estimates on the local property of the solution of the 3D Stokes problem \eqref{eqn.2} near a vertex on the $z$-axis.

\begin{lemma}\label{3.3}
Let $\tilde\bu=(u_1, u_2, u_3)\in [H^1_0(\tilde\Omega)]^3$ be the solution of the 3D Stokes problem and let $\tilde{\maV}=\maV\times[0, 2\pi)$ be the 3D neighborhood  of a vertex $Q^z$ on the $z$-axis. Then, each $u_j$, $1 \leq j\leq 3$, satisfies 
\begin{eqnarray*}
\|\vartheta^{-1}u_j\|_{L^2(\tilde{\maV})}\leq C\|u_j\|_{H^1(\tilde{\maV})},
\end{eqnarray*}
where $\vartheta$ is the distance function to the vertex $Q^z$.
\end{lemma}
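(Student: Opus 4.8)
The plan is to reduce the claimed weighted estimate to a Hardy-type inequality on the cone $\tilde{\maV}$, exactly as in the $Q^r$ case (Lemma \ref{3.2}) but now with the vertex sitting on the $z$-axis, so that $\vartheta$ is genuinely the distance to a single point $Q^z$ and the geometry near that point is a three-dimensional cone. The key observation is that the statement concerns each scalar component $u_j$ separately and only uses $u_j\in H^1(\tilde{\maV})$ together with the boundary condition $u_j=0$ on $\pa\tilde\Omega\cap\tilde{\maV}$, inherited from $\tilde\bu\in[H^1_0(\tilde\Omega)]^3$; it does not use the Stokes equations at all. So it suffices to prove: for every $v\in H^1(\tilde{\maV})$ with $v=0$ on the lateral boundary of the cone $\tilde{\maV}$,
\begin{eqnarray}\label{hardy3d}
\|\rho^{-1}v\|_{L^2(\tilde{\maV})}\leq C\|v\|_{H^1(\tilde{\maV})},
\end{eqnarray}
where $\rho=|x-Q^z|$ is the distance to the cone vertex, since on $\tilde{\maV}$ one has $\vartheta=\rho$ by the construction of $\vartheta$ in Definition \ref{def.regular1}.

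First I would set up spherical coordinates $(\rho,\phi,\theta)$ centered at $Q^z$ as in \eqref{eqn.cartesian}, so that $\tilde{\maV}=\{(\rho,\phi,\theta):0<\rho<L/2,\ (\phi,\theta)\in G\}$ for a fixed spherical cap (or annular region of the sphere) $G\subset S^2$ determined by the opening of the 3D domain at $Q^z$; crucially $\pa G$ corresponds to the lateral boundary $\pa\tilde\Omega\cap\tilde{\maV}$, so $v$ vanishes there. Writing $dx\,dy\,dz=\rho^2\sin\theta\,d\rho\,d\phi\,d\theta$ and freezing $\rho$, the trace-zero condition on $\pa G$ gives a Poincaré inequality on the fixed domain $G$:
\begin{eqnarray*}
\int_G v(\rho,\omega)^2\,d\omega \leq C_G\int_G |\nabla_\omega v(\rho,\omega)|^2\,d\omega,
\end{eqnarray*}
with $C_G$ independent of $\rho$. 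Multiplying by $\rho^{-2}\cdot\rho^2=1$ and integrating in $\rho$, the right-hand side is controlled by $\int_{\tilde\maV}|\nabla v|^2\,dx$ because the tangential part of the full gradient is $\rho^{-1}\nabla_\omega v$, so $\rho^{-2}|\nabla_\omega v|^2\le|\nabla v|^2$; this yields \eqref{hardy3d} with $C=C_G^{1/2}$. (Equivalently one may cite the standard Hardy inequality on cones in \cite{KMR01,KMR197}, as was done for $Q^r$.) Then I would conclude by applying \eqref{hardy3d} to $v=u_j$ for each $j=1,2,3$, using $\vartheta=\rho$ on $\tilde\maV$, which gives the stated bound with the same constant $C$.

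The only delicate point — and the one I would make sure to state carefully — is the justification of the frozen-$\rho$ Poincaré inequality: it requires that $v(\rho,\cdot)\in H^1(G)$ for a.e.\ $\rho$ and that it vanishes on $\pa G$ in the trace sense, which follows from $v\in H^1(\tilde{\maV})$ and $v|_{\pa\tilde\Omega\cap\tilde\maV}=0$ by Fubini together with the standard fact that $H^1$ restriction to spheres is well-defined for a.e.\ radius. One also needs $\pa\tilde\Omega\cap\tilde{\maV}$ to have nonempty intersection with each sphere $\rho=\text{const}$, i.e.\ $G\neq S^2$, which holds because $Q^z$ is a genuine boundary vertex (the domain does not wrap all the way around near $Q^z$, since $\pa\Omega\cap\{r=0\}$ has positive measure). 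Apart from this, the argument is routine; the main work was recognizing that the component-wise statement decouples completely from the Stokes system and reduces to a dimension-three Hardy inequality at a cone point.
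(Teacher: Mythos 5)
Your proposal is correct and follows essentially the same route as the paper: spherical coordinates centered at $Q^z$, a Poincar\'e inequality on the fixed spherical cross-section $\omega_{Q^z}$ (using the zero trace inherited from $\tilde\bu\in[H^1_0(\tilde\Omega)]^3$ on the lateral boundary), and integration in $\rho$ using that the angular part of $|\nabla v|^2$ carries the factor $\rho^{-2}$. Your added remarks on the a.e.-$\rho$ trace justification and on $\omega_{Q^z}\neq S^2$ only make explicit points the paper leaves implicit.
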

\begin{proof}
$\tilde{\maV}$ can be characterized in the spherical coordinates $(\rho, \phi, \theta)$ centered at $Q^z$ by
\begin{eqnarray*}
\tilde{\maV}=\{(\rho, \omega),\ 0<\rho<L/2,\ \omega\in \omega_{Q^z}\},
\end{eqnarray*}
where $\omega_{Q^z}\subset S^2$ is the polygonal domain on the unit sphere $S^2$. Then, for any $v\in H^1(\tilde\maV)\cap\{v|_{\pa\tilde\Omega}=0\}$,
$$
|\nabla v|^2=v_x^2+v_y^2+v_z^2=v_\rho^2+\frac{v_{\theta}^2}{\rho^2}+\frac{v_{\phi}^2}{\rho^2\sin^2\theta},
$$
and 
$$
\int_{\omega_{Q^z}}v^2dS\leq C\int_{\omega_{Q^z}}(v_{\theta}^2+\frac{v_{\phi}^2}{\sin^2\theta})\sin\theta d\phi d\theta,
$$
which is just the Poincar\'e inequality on $\omega_{Q^z}$ and $dS=\sin{\theta}d\phi d\theta$ is the volume element on $\omega_{Q^z}$. Thus, we obtain
\begin{eqnarray}
\int_{\tilde{\maV}}\frac{v^2}{\rho^2}dxdydz&=&\int_0^{L/2}\int_{\omega_{Q^z}}v^2dS d\rho \leq C\int_0^{L/2}\int_{\omega_{Q^z}}\big(v_\rho^2+\frac{v_{\theta}^2}{\rho^2}+\frac{v_{\phi}^2}{\rho^2\sin^2\theta}\big)\rho^2dSd\rho \nonumber\\
&=&C\int_{\tilde{\maV}}|\nabla v|^2dxdydz.\label{above1}.
\end{eqnarray}
The estimate \eqref{above1} is valid for all functions $u_1$, $u_2$ and $u_3$, which completes the proof. 
\end{proof}

Recall the neighborhoods $\maV$ (2D) and $\tilde\maV$ (3D) of a vertex. Define the small neighborhoods $\maV/k:=\Omega\cap B(Q, L/(2k))\subset\maV$ and $\tilde\maV/k=\maV/k\times[0, 2\pi)\subset \tilde{\maV}$, where the integer $k\geq 1$. We first have the local regularity estimate for the solution of the axisymmetric Stokes equation near a vertex away from the $z$-axis.

\begin{lemma}\label{lem.2dstokes}
Near a vertex $Q^r$ away from the $z$-axis, there exists $\eta>0$, such that for any $0\leq a<\eta$, if $\bbf\in [\maK^{m}_{a-1, 1}(\maV)]^2$, the solution $(\bu, p)$ of equation \eqref{eqn.5} satisfies 
\begin{eqnarray*}
\|\bu\|_{[\maK^{m+2}_{a+1, 1}(\maV/2)]^2}+\|p\|_{\maK^{m+1}_{a, 1}(\maV/2)}\leq C(\|\bbf\|_{[\maK^{m}_{a-1, 1}(\maV)]^2}+\|\bbf\|_{H^1_{-,0}(\Omega)'\times H^1_{+,0}(\Omega)'}).
\end{eqnarray*}
\end{lemma}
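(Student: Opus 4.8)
The plan is to reduce the statement, near a vertex $Q^r$ away from the $z$-axis, to a classical interior/near-boundary regularity result in ordinary (non-weighted) Kondrat$'$ev spaces for the plain $2$D Stokes system. Since $r$ is bounded away from $0$ on $\maV$ (because $Q^r\notin\{r=0\}$), the weight $r$ in all the $L^2_1$-type norms is equivalent to a positive constant, so every space $\maK^{m}_{a,1}(\maV)$, $H^1_{\pm}(\maV)$, etc., is equivalent to the corresponding standard weighted or unweighted Sobolev space on $\maV$; in particular $\vartheta=|x-Q^r|=\rho$ on $\maV$ is exactly the distance to $Q^r$. Likewise the singular coefficients $r^{-1}\pa_r$, $r^{-2}$ in \eqref{eqn.5} are smooth and bounded on $\overline{\maV}$, so \eqref{eqn.5} restricted to $\maV$ is a $2$D Stokes system with smooth coefficients and a smooth perturbation (a compactly supported zeroth/first order term), posed on a neighborhood of a corner of the polygon $\Omega$ with Dirichlet data on $\Gamma\cap\overline{\maV}$.

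First I would invoke the well-posedness from Proposition \ref{lem.normal} to control the $H^1_-\times H^1_+\times L^2_1$ norm of $(\bu,p)$ on $\Omega$ (hence on $\maV$) by $\|\bbf\|_{H^1_{-,0}(\Omega)'\times H^1_{+,0}(\Omega)'}$; this is the origin of the second term on the right-hand side, and it provides the base case ($a$ close to $0$, $m=0$) together with Lemma \ref{3.2}, which already gives $\|\vartheta^{-1}\bu\|_{L^2_1(\maV)}\le C\|\bu\|_{H^1_-\times H^1_+}$, i.e. membership of $\bu$ in $\maK^1_{1,1}$ locally. Then I would apply the standard corner regularity theory for the Stokes system in weighted (Kondrat$'$ev) spaces on a plane sector — see e.g. \cite{KMR01, KMR197, MR0467032} — which asserts the existence of a threshold $\eta>0$, determined by the smallest positive real part of the exponents of the singular functions associated with the Stokes operator on the infinite cone with opening angle equal to the interior angle at $Q^r$ (and the relevant boundary conditions), such that for all $a$ with $0\le a<\eta$ one has the a priori shift estimate
\begin{eqnarray*}
\|\bu\|_{[\maK^{m+2}_{a+1,1}(\maV/2)]^2}+\|p\|_{\maK^{m+1}_{a,1}(\maV/2)}\le C\big(\|\bbf\|_{[\maK^{m}_{a-1,1}(\maV)]^2}+\|\bu\|_{[H^1(\maV)]^2}+\|p\|_{L^2(\maV)}\big),
\end{eqnarray*}
the last two terms being absorbed by Proposition \ref{lem.normal}; a cutoff argument moves from $\maV$ to $\maV/2$ and handles the lower-order perturbation terms coming from $r^{-1}\pa_r$ and $r^{-2}$, which are subcritical (one derivative lower) relative to the leading Stokes operator. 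Equivalently, one can localize further: near $Q^r$ itself use the dilation-invariant theory on the model cone, and on the part of $\maV$ bounded away from $Q^r$ use plain elliptic (Agmon--Douglis--Nirenberg) interior and boundary regularity for Stokes, then glue with a partition of unity subordinate to dyadic annuli centered at $Q^r$ and sum the resulting scaled estimates — this is exactly where the restriction $a<\eta$ is used, to make the geometric series in the dyadic summation converge.

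The main obstacle is bookkeeping rather than conceptual: one must verify that the weighted spaces in Definition \ref{def.regular1}, restricted to $\maV$, really do coincide (with equivalent norms) with the Kondrat$'$ev spaces for which the cited Stokes regularity is stated, including the precise normalization of the weight exponent (here $\vartheta^{-\mu+|\alpha|}\pa^\alpha_c$ with $\mu=a+1$ for $\bu$ and $\mu=a$ for $p$, matching the ``$+2$ derivatives, $+1$ weight index'' Stokes shift), and that the extra $r$-weight and the constraints defining $\maK^m_{\mu,\pm}$ are harmless here because $r\asymp 1$. A secondary technical point is that \eqref{eqn.5} is not literally the Stokes system but a compact perturbation of it (the $-r^{-2}u_r$ and $r^{-1}u_r$ terms); these are lower-order with smooth bounded coefficients on $\overline{\maV}$, so they are absorbed either by a standard bootstrap (move them to the right-hand side, where they cost one fewer derivative and hence land in a better weighted space) or by noting they do not change the indicial exponents of the principal part, leaving $\eta$ unchanged. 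I would organize the write-up as: (1) norm-equivalence on $\maV$ since $r\asymp 1$; (2) citation of the model-cone Stokes shift theorem, giving $\eta$; (3) dyadic localization and summation for $0\le a<\eta$; (4) absorption of the lower-order terms and of the $H^1\times L^2$ remainder via Proposition \ref{lem.normal}.
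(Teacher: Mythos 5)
Your proposal is correct and follows essentially the same route as the paper: a cutoff localization near $Q^r$, the observation that $r\asymp 1$ on $\maV$ makes the axisymmetric terms lower-order perturbations of the plain 2D Stokes operator and the weighted spaces equivalent to standard Kondrat$'$ev spaces, a base-case estimate from Proposition \ref{lem.normal} together with Lemma \ref{3.2}, and then the Kondrat$'$ev-type shift theorem with $\eta$ given by the least positive real part of the operator-pencil eigenvalues (the paper cites Corollary 1.2.7 of \cite{mazyabook} and \cite{Dauge89} for exactly this). Your dyadic-annulus alternative is simply an unpacking of how that cited shift theorem is proved, so there is no substantive difference.
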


\begin{proof}

We  apply a localization argument. Let $\zeta$ be a smooth cutoff function, such that $\zeta=1$ on $\maV/2$ and $\zeta=0$ outside $\maV$. Then, $\zeta\bu$ has the Dirichlet boundary condition on $\maV$. Then, we have 
\begin{eqnarray}\label{18}
\left\{\begin{array}{ll}
-(\pa_r^2+r^{-1}\pa_r+\pa_z^2-r^{-2})\zeta u_r+\partial_r\zeta p=F_1:=\zeta f_r+g_1 \quad {\rm{in}}\ \maV\\
-(\pa_r^2+r^{-1}\pa_r+\pa_z^2)\zeta u_z+\partial_z\zeta p=F_2:=\zeta f_z+g_2\quad {\rm{in}}\ \maV\\
(\partial_r+r^{-1})\zeta u_r+\partial_z\zeta u_z=g_3\quad {\rm{in}}\ \maV,
\end{array}\right.
 \end{eqnarray}
where 
\begin{eqnarray*}
&g_1=u_r(\pa_r^2+\pa_z^2)\zeta+2(\pa_r\zeta\pa_ru_r+\pa_z\zeta\pa_zu_r)+r^{-1}u_r\pa_r\zeta-p\pa_r\zeta,\\
&g_2=u_z(\pa_r^2+\pa_z^2)\zeta+2(\pa_r\zeta\pa_ru_z+\pa_z\zeta\pa_zu_z)+r^{-1}u_z\pa_r\zeta-p\pa_z\zeta,\\
&g_3=u_r\pa_r\zeta+u_z\pa_z\zeta.
\end{eqnarray*}
Based on Proposition \ref{lem.normal} and Lemma \ref{3.2}, the solution $(\zeta\bu, \zeta p)$ of eqution \eqref{18} satisifes
\begin{eqnarray}
&&\|\vartheta^{-1}\zeta u_r\|)_{L^2_1(\maV)}+\|\vartheta^{-1}\zeta u_z\|_{L^2_1(\maV)}+\|\zeta u_r\|_{H^1_{-}(\maV)}+\|\zeta u_z\|_{H^1_{+}(\maV)}+\|\zeta p\|_{L^2_1(\maV)}\label{eqn.25}\\
&&\leq C(\|\zeta u_r\|_{H^1_{-}(\maV)}+\|\zeta u_z\|_{H^1_{+}(\maV)}+\|\zeta p\|_{L^2_1(\maV)})\nonumber\\ 
&&\leq C(\|\zeta f_r\|_{H^1_{-,0}(\maV)'}+\|g_1\|_{H^1_{-,0}(\maV)'}+\|\zeta f_z\|_{H^1_{+,0}(\maV)'}+\|g_2\|_{H^1_{+,0}(\maV)'}+\|g_3\|_{L^2_1(\maV)'}). \nonumber
\end{eqnarray}
Recall $r$ is bounded away from 0 on $\maV$. Then, the regularity of the solution \eqref{18} is determined by the principle part of the operator, which is the 2D Stokes operator.
Also note that the supports of $g_1$, $g_2$, and $g_3$ are away from the vertex $Q^r$. Therefore, the weighted norms and the usual Sobolev norms are equivalent for these functions. Let $\bbg=(g_1, g_2, g_3)$. Then, using the interior regularity estimate in the usual Sobolev spaces and Proposition \ref{lem.normal}, we have
\begin{eqnarray}
\|\bbg\|_{[\maK^{m}_{a-1, 1}(\maV)]^2\times \maK^{m+1}_{a, 1}(\maV)}&\leq &C\|\bbg\|_{[H^{m}(\maV)]^2\times H^{m+1}(\maV)}\nonumber \\
&\leq& C(\|\bu\|_{[H^{m+1}(\maV\backslash\maV/2)]^2}+\|p\|_{H^m(\maV\backslash\maV/2)})\nonumber\\
&\leq& C(\|\bbf\|_{[H^{m-1}(\maV\backslash\maV/4)]^2}+\|\bu\|_{[H^{1}(\maV\backslash\maV/4)]^2}+\|p\|_{L^2(\maV\backslash\maV/4)})\nonumber\\
&\leq &C(\|\bbf\|_{[H^{m-1}(\maV\backslash\maV/4)]^2}+\| \bbf\|_{H^1_{-,0}(\Omega)'\times H^1_{+,0}(\Omega)'})\nonumber\\
&\leq& C(\|\bbf\|_{[\maK^{m}_{a-1, 1}(\maV)]^2}+\| \bbf\|_{H^1_{-,0}(\Omega)'\times H^1_{+,0}(\Omega)'}).  \label{rhs1}
\end{eqnarray}
Thus, by \eqref{rhs1}, the right hand side of equation \eqref{18} $(F_1, F_2, g_3)\in \maK^{m}_{a-1, 1}(\maV)\times\maK^{m}_{a-1, 1}(\maV)\times \maK^{m+1}_{a, 1}(\maV)$.

Let $\maH$ be either $H^1_{+, 0}(\maV)'$ or $H^1_{-, 0}(\maV)'$. Since $r$ is bounded away from 0,  $\maH=H^{-1}(\maV)$. Then, for any $v\in \maH$, by Lemma \ref{3.2} and the fact $a\geq 0$, we have
\begin{eqnarray}
\|v\|_\maH&=&\sup_{0\neq w\in H^1_0(\maV)}\frac{(v, w)}{\|w\|_{H^1(\maV)}}\leq \sup_{0\neq w\in H^1_0(\maV)}\frac{\|\vartheta^{1-a}v\|_{L^2(\maV)}\|\vartheta^{a-1}w\|_{L^2(\maV)}}{\|w\|_{H^1(\maV)}} \label{27}\\
&\leq &\|\vartheta^{1-a}v\|_{L^2(\maV)}=\|v\|_{\maK^{0}_{a-1, 1}(\maV)}. \nonumber
\end{eqnarray}

For $m=0$, setting $a=0$ in \eqref{eqn.25}, \eqref{rhs1}, and \eqref{27}, we then have 
\begin{eqnarray*}
&&\|\zeta u_r\|_{\maK^1_{1, 1}(\maV)}+\|\zeta u_z\|_{\maK^1_{1, 1}(\maV)}+\|\zeta p\|_{\maK^0_{0, 1}(\maV)}\\
&&\leq C(\|\zeta f_r+g_1\|_{\maK^0_{-1, 1}(\maV)}+\|\zeta f_z+g_2\|_{\maK^0_{-1, 1}(\maV)}+\|g_3\|_{\maK^1_{0, 1}(\maV)}).
\end{eqnarray*}
Let $\omega>0$ be  the least positive real part of the eigenvalues of the operator pencil for the 2D Stokes operator on $\maV$ \cite{Dauge89}. Define $\eta:=\omega$. Based on Corollary 1.2.7 in \cite{mazyabook}, if the solution of equation \eqref{18} is in $\maK^1_{1, 1}(\maV)\times\maK^1_{1, 1}(\maV)\times\maK^0_{0, 1}(\maV)$ and $(F_1, F_2, g_3)\in \maK^m_{a-1, 1}(\maV)\times\maK^m_{a-1, 1}(\maV)\times\maK^{m+1}_{a, 1}(\maV)$, as long as $0\leq a<\eta$, we can conclude
\begin{eqnarray*}
&&\|\zeta u_r\|_{\maK^{m+2}_{a+1, 1}(\maV)}+\|\zeta u_z\|_{\maK^{m+2}_{a+1, 1}(\maV)}+\|\zeta p\|_{\maK^{m+1}_{a, 1}(\maV)}\\
&&\leq C(\|\zeta f_r+g_1\|_{\maK^{m}_{a-1, 1}(\maV)}+\|\zeta f_z+g_2\|_{\maK^m_{a-1, 1}(\maV)}+\|g_3\|_{\maK^{m+1}_{a, 1}(\maV)})\\
&&\leq C(\|\bbf\|_{[\maK^{m}_{a-1, 1}(\maV)]^2}+\| \bbf\|_{H^1_{-,0}(\Omega)'\times H^1_{+,0}(\Omega)'}). 
\end{eqnarray*}
The lemma is thus proved due to the definition of the function $\zeta$.
\end{proof}

We now give a regularity estimate near a vertex on the $z$-axis.
\begin{lemma}\label{lem.3dstokes}
In the small neighborhood $\tilde{\maV}\subset\tilde\Omega$ of a vertex $Q^z$ on the $z$-axis, there is $\eta>0$, such that for any $0\leq a<\eta$,  the solution $(\tilde u_1, \tilde u_2, \tilde u_3, \tilde p)$ of the 3D Stokes equation \eqref{eqn.2} satisfies 
\begin{eqnarray*}
\big(\sum_{k=1}^3\sum_{|\alpha|\leq{m+2}}\|\vartheta^{-a-1+|\alpha|}\pa^{\alpha}\tilde u_k\|^2_{L^2({\tilde{\maV}/2})})^{1/2}+\big(\sum_{|\alpha|\leq{m+1}}\|\vartheta^{-a+|\alpha|}\pa^{\alpha}\tilde p\|^2_{L^2(\tilde{\maV}/2)}\big)^{1/2}\\
\leq C\Big(\big(\sum_{k=1}^3\sum_{|\alpha|\leq{m}}\|\vartheta^{-a+1+|\alpha|}\pa^{\alpha}\tilde f_k\|^2_{L^2(\tilde{\maV})}\big)^{1/2}+\|\tilde\bbf\|_{[H^{-1}(\tilde\Omega)]^3}\Big).
\end{eqnarray*}
\end{lemma}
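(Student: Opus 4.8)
The plan is to reduce the statement to a known weighted regularity (shift) theorem for the Stokes system near a conical point in three dimensions, exactly as Lemma \ref{lem.2dstokes} reduces the two-dimensional case to the result of Maz'ya's book. First I would record the geometry: $\tilde\maV = \maV\times[0,2\pi)$ is, in the spherical coordinates $(\rho,\phi,\theta)$ of \eqref{eqn.cartesian} centered at $Q^z$, a three-dimensional cone $\{0<\rho<L/2,\ \omega\in\omega_{Q^z}\}$ with $\omega_{Q^z}\subset S^2$ a spherical polygon; the function $\vartheta$ equals $\rho$ on $\tilde\maV$, so the weighted norms in the statement are precisely the Kondrat'ev-type norms $\sum_{|\alpha|\le k}\|\rho^{-a+|\alpha|-(\text{shift})}\pa^\alpha(\cdot)\|_{L^2(\tilde\maV)}$ adapted to the Stokes shift (weight $-a-1$ on velocity, $-a$ on pressure, $-a+1$ on the right-hand side).

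Next I would set up a localization identical to the one in Lemma \ref{lem.2dstokes}: pick a cutoff $\zeta$ with $\zeta=1$ on $\tilde\maV/2$, $\zeta=0$ outside $\tilde\maV$, and write the equations satisfied by $(\zeta\tilde\bu,\zeta\tilde p)$; the commutator terms $\tilde\bbg = (\tilde g_1,\tilde g_2,\tilde g_3)$ are supported in $\tilde\maV\setminus\tilde\maV/2$, away from the vertex, so on their support the weighted norms are equivalent to ordinary Sobolev norms. Interior elliptic regularity for the 3D Stokes system together with the global $H^1$ bound $\|\tilde\bu\|_{H^1(\tilde\Omega)}+\|\tilde p\|_{L^2(\tilde\Omega)}\le C\|\tilde\bbf\|_{H^{-1}(\tilde\Omega)^3}$ (well-posedness of the 3D Stokes problem) then control $\|\tilde\bbg\|$ in the relevant weighted spaces by $\|\tilde\bbf\|_{H^{m-1}(\tilde\maV\setminus\tilde\maV/4)^3}+\|\tilde\bbf\|_{H^{-1}(\tilde\Omega)^3}$, hence by $\big(\sum_{|\alpha|\le m}\|\vartheta^{-a+1+|\alpha|}\pa^\alpha\tilde\bbf\|_{L^2(\tilde\maV)}^2\big)^{1/2}+\|\tilde\bbf\|_{H^{-1}(\tilde\Omega)^3}$. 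Together with the base regularity $\tilde\bu\in H^1$, $\tilde p\in L^2$ rephrased as membership in the weight-$1$/weight-$0$ Kondrat'ev spaces on $\tilde\maV$ (which I would check using Lemma \ref{3.3} for the velocity components: $\|\rho^{-1}\tilde u_k\|_{L^2(\tilde\maV)}\le C\|\tilde u_k\|_{H^1(\tilde\maV)}$), this puts both the solution and the data of the localized problem into the range where the shift theorem applies.

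Then I would invoke the weighted a priori estimate for the Stokes operator on a three-dimensional cone: there is a discrete set of "exceptional" exponents determined by the eigenvalues of the associated operator pencil on the spherical polygon $\omega_{Q^z}$ (with Dirichlet conditions), and for weight indices avoiding that set — in particular for $0\le a<\eta$ with $\eta$ the distance from the base line to the nearest exceptional exponent — one gains two derivatives for the velocity and one for the pressure in the weighted scale, i.e. an estimate of the form $\|\zeta\tilde\bu\|_{\maK^{m+2}_{a+1}}+\|\zeta\tilde p\|_{\maK^{m+1}_{a}} \le C(\|\zeta\tilde\bbf\|_{\maK^{m}_{a-1}}+\text{commutator terms}+\text{base norm})$. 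This is the three-dimensional analogue of "Corollary 1.2.7 in \cite{mazyabook}" used in Lemma \ref{lem.2dstokes}, applied to the Stokes system rather than a scalar operator; references such as \cite{Dauge89, KMR01} supply exactly this. Restricting to $\tilde\maV/2$ where $\zeta\equiv1$ and absorbing the commutator and base terms via the bounds of the previous paragraph yields the claimed inequality.

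The main obstacle — and the only nonroutine point — is justifying that $\eta$ can be taken strictly positive and that the base exponent $a=0$ is admissible, i.e.\ that the Stokes operator pencil on $\omega_{Q^z}$ has no exceptional exponent on the relevant critical line, so that the weight-$1$ (velocity)/weight-$0$ (pressure) a priori estimate holds unconditionally as the starting point of the induction on $m$. This requires the sharp spectral information for the Dirichlet Stokes pencil on a spherical polygon; I would cite the analysis of \cite{Dauge89, KMR01} (and, for the consistency of the energy space with the weighted space, the Hardy/Poincaré inequality of Lemma \ref{3.3}) and define $\eta$ exactly as the least positive real part of the relevant pencil eigenvalues, mirroring the definition of $\eta=\omega$ in Lemma \ref{lem.2dstokes}. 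Everything else is the same cutoff-and-shift bookkeeping already carried out in the two-dimensional lemma.
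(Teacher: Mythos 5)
Your proposal follows essentially the same route as the paper's proof: the same cutoff localization with $\zeta$, the same interior-regularity-plus-well-posedness bound on the commutator terms, Lemma \ref{3.3} to establish the base case $m=0$, $a=0$, and the weighted shift theorem (Corollary 1.2.7 of \cite{mazyabook}) for the Stokes operator pencil on the 3D cone. The only deviation is your final choice of $\eta$: the paper sets $\eta=\omega+1/2$, the extra $1/2$ coming from the dimensional shift in the 3D Kondrat$'$ev scale, whereas your $\eta=\omega$ is a smaller but still valid choice for the lemma as stated; the sharper value is, however, what is used later in \eqref{2222}, Theorem \ref{thm.main2}, and the numerical verification on $\Omega_2$.
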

\begin{proof}
We use a localization augment similarly to the one in Lemma \ref{lem.2dstokes}. Let $\zeta$ be a smooth cutoff function, such that $\zeta=1$  on $\tilde{\maV}/2$ and $\zeta = 0$ outside $\tilde{\maV}$. Let $S$ be the 3D Stokes operator in equation \eqref{eqn.2}. Then, we have
\begin{eqnarray}\label{newn}
S(\zeta\tilde u, \zeta\tilde p)=(\zeta \tilde \bbf + \tilde\bbh, \tilde g),
\end{eqnarray}
where 
\begin{eqnarray*}
&\tilde \bbh=\tilde \bu\Delta \zeta+2\pa_x\tilde \bu\pa_x\zeta+2\pa_y\tilde \bu\pa_y\zeta+2\pa_z\tilde \bu\pa_z\zeta-\tilde p\nabla\zeta,\\
&\tilde g=\tilde u_1\pa_x\zeta+\tilde u_2\pa_y\zeta+\tilde u_3\pa_z\zeta.
\end{eqnarray*}
Since $\tilde\bbh=(\tilde h_1, \tilde h_2, \tilde h_3)$ and $\tilde g$ vanish near $Q^z$, using the well-posedness of the Stokes problem \eqref{eqn.2}, the usual interior regularity estimate, and the expressions of $\tilde\bbh$, $\tilde g$ above, we first have
\begin{eqnarray}
&&\big(\sum_{k=1}^3\sum_{|\alpha|\leq{m}}\|\vartheta^{-a+1+|\alpha|}\pa^{\alpha}\tilde h_k\|_{L^2(\tilde{\maV})}^2\big)^{1/2}+\big(\sum_{|\alpha|\leq{m+1}}\|\vartheta^{-a+|\alpha|}\pa^{\alpha}\tilde g\|^2_{L^2(\tilde{\maV} )}\big)^{1/2}\nonumber\\
&&\leq C(\|\tilde \bu\|_{[H^{m+1}(\tilde\maV \backslash\tilde \maV /2)]^3}+\|\tilde p\|_{H^{m}(\tilde\maV \backslash\tilde \maV /2)})\leq C(\|\tilde\bbf\|_{[H^{m-1}(\tilde\maV \backslash\tilde \maV /4)]^3}+\|\tilde\bbf\|_{[H^{-1}(\tilde\Omega)]^3})\nonumber\\
&&\leq C\Big(\big(\sum_{k=1}^3\sum_{|\alpha|\leq{m}}\|\vartheta^{-a+1+|\alpha|}\pa^{\alpha}\tilde f_k\|^2_{L^2(\tilde{\maV} )}\big)^{1/2}+\|\tilde\bbf\|_{[H^{-1}(\tilde\Omega)]^3}\Big).\label{rhs2}
\end{eqnarray}
Therefore, the right hand side of equation \eqref{newn} is bounded by \eqref{rhs2}.

For $m=0$ and $a=0$, by Lemma \ref{3.3}, \eqref{rhs2}, and the well-posedness of the local Stokes problem \eqref{newn}, we have
\begin{eqnarray}
&&\big(\sum_{k=1}^3\sum_{|\alpha|\leq{1}}\|\vartheta^{-1+|\alpha|}\pa^{\alpha}(\zeta\tilde u_k)\|_{L^2(\tilde{\maV} )}^2\big)^{1/2}+\|\zeta\tilde p\|_{L^2(\maV )}\label{3dw}\\
&&\leq C\big(\|\zeta\tilde \bbf+\tilde\bbh\|_{[H^{-1}(\tilde\maV)]^3}+\|\tilde g\|_{L^2(\tilde\maV)}\big) \leq C\Big(\big(\sum_{k=1}^3\|\vartheta\tilde f_k\|^2_{L^2(\tilde{\maV})}\big)^{1/2}+\|\tilde\bbf\|_{[H^{-1}(\tilde\Omega)]^3}\Big).\nonumber
\end{eqnarray}

Let $\omega>0$ be the least positive real part of the  eigenvalues of the operator pencil for the 3D Stokes operator in \eqref{newn}. Define $\eta=\omega+1/2$. Based on Corollary 1.2.7 in \cite{mazyabook}, the estimate in \eqref{3dw} and \eqref{rhs2} imply
\begin{eqnarray*}
\big(\sum_{k=1}^3\sum_{|\alpha|\leq{m+2}}\|\vartheta^{-a-1+|\alpha|}\pa^{\alpha}(\zeta\tilde u_k)\|^2_{L^2({\tilde{\maV}})})^{1/2}+\big(\sum_{|\alpha|\leq{m+1}}\|\vartheta^{-a+|\alpha|}\pa^{\alpha}(\zeta\tilde p)\|_{L^2(\tilde{\maV})}\big)^{1/2}\\
\leq C\Big(\big(\sum_{k=1}^3\sum_{|\alpha|\leq{m}}\|\vartheta^{-a+1+|\alpha|}\pa^{\alpha}\tilde f_k\|^2_{L^2(\tilde{\maV})}\big)^{1/2}+\|\tilde\bbf\|_{[H^{-1}(\tilde\Omega)]^3}\Big).
\end{eqnarray*}
 as long as $0\leq a<\eta$.
 
 The lemma is thus proved due to the definition of $\zeta$.
\end{proof}

\subsection{Global estiamtes}
Combining the local estimates in the lemmas above, we derive the global regularity estimate for equation \eqref{eqn.4}.

\begin{theorem}\label{thm.stokesreg} Let $(\bu, p)\in H^1_{-,0}(\Omega)\times H^1_{+,0}(\Omega)\times L^2_{1, 0}(\Omega)$ be the solution of the axisymmetric Stokes equation \eqref{eqn.4}. There exists $\eta>0$, such that for any $0\leq a<\eta$, if $\bbf\in \maK^{m}_{a-1, -}(\Omega)\times \maK^{m}_{a-1, +}(\Omega)$, then 
\begin{eqnarray*}
\|\vartheta^{-a}r^{-1}u_r\|_{L^2_1(\Omega)}+\|\bu\|_{[\maK^{m+2}_{a+1, 1}(\Omega)]^2}+\|p\|_{\maK^{m+1}_{a, 1}(\Omega)} \leq C\|\bbf\|_{\maK^{m}_{a-1, -}(\Omega)\times \maK^{m}_{a-1, +}(\Omega)}.
\end{eqnarray*}
\end{theorem}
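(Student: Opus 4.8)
The plan is to patch together the local estimates of Lemmas~\ref{lem.2dstokes} and~\ref{lem.3dstokes} using a finite covering of $\bar\Omega$ by the vertex neighborhoods $\maV_i$ and interior pieces, and to translate the $3$D weighted estimate near a vertex $Q^z$ into a $2$D weighted estimate for $(u_r,u_z,p)$ via the isomorphism of Proposition~\ref{prop.1}. The starting point is the global well-posedness estimate \eqref{ee2}, which controls the $H^1_-\times H^1_+\times L^2_1$ norm of $(\bu,p)$ by the data; this handles the low-order part and, on the interior region $\Omega\setminus\bigcup_i\maV_i/2$ where $r$ is bounded away from $0$ and the weight $\vartheta$ is bounded above and below, the usual interior elliptic regularity for the Stokes system upgrades this to the full $H^{m+2}\times H^{m+1}$ estimate, which coincides (up to constants) with the $\maK^{m+2}_{a+1,1}\times\maK^{m+1}_{a,1}$ estimate there.

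Next I would treat the two kinds of vertices separately. For a vertex $Q^r$ away from the $z$-axis, Lemma~\ref{lem.2dstokes} directly gives the estimate on $\maV_i/2$ with $\eta$ the least positive real part of the eigenvalues of the $2$D Stokes operator pencil at $Q^r$; since $r$ is bounded away from $0$ on $\maV_i$, the spaces $\maK^m_{a-1,-}$, $\maK^m_{a-1,+}$, $\maK^m_{a-1,1}$ all coincide there, so the data norm on the right is controlled by $\|\bbf\|_{\maK^m_{a-1,-}(\Omega)\times\maK^m_{a-1,+}(\Omega)}$, and the $r^{-1}u_r$ term is harmless (again $r\gtrsim 1$). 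For a vertex $Q^z$ on the $z$-axis, I would first apply Proposition~\ref{prop.1} (or rather its underlying isometry between axisymmetric $3$D Sobolev-type norms and the weighted $2$D norms $H^m_\pm$) to rewrite the data hypothesis $\bbf\in\maK^m_{a-1,-}\times\maK^m_{a-1,+}$ as a bound on the $3$D quantities $\sum_k\sum_{|\alpha|\le m}\|\vartheta^{-a+1+|\alpha|}\pa^\alpha\tilde f_k\|_{L^2(\tilde\maV)}$, feed this into Lemma~\ref{lem.3dstokes} to get the $3$D weighted bound on $(\tilde u_k,\tilde p)$ on $\tilde\maV/2$, and then translate back: the radial/axial components $u_r,u_z$ of $\tilde\bu$ and the axisymmetric function $\tilde p$ satisfy $3$D weighted bounds, which by the component-wise version of Proposition~\ref{prop.1} and by Lemmas~\ref{lem2}--\ref{lem3} (passing between $\|\rho^{-a+l}v\|_{H^l_1}$ sums and the $\maK^m_{a,1}$ norm in the $rz$-plane) give precisely $\bu\in[\maK^{m+2}_{a+1,1}(\maV_i/2)]^2$, $p\in\maK^{m+1}_{a,1}(\maV_i/2)$, together with the extra weighted term: the $3$D bound $\|\vartheta^{-a-1}\tilde u_k\|_{L^2(\tilde\maV/2)}\lesssim\cdots$ for each $k$ encodes, after the change to cylindrical variables $dxdydz=r\,dr\,dz\,d\phi$ and the formula $u_1=u_r\cos\phi$ etc., the bound $\|\vartheta^{-a}r^{-1}u_r\|_{L^2_1(\maV_i/2)}\lesssim\cdots$, since $u_r$ vanishing on the axis forces the factor $r^{-1}$ through the relation between the Cartesian components and $(u_r,u_z)$.

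Finally I would choose $\eta$ to be the minimum over all vertices of the local $\eta$'s from Lemmas~\ref{lem.2dstokes} and~\ref{lem.3dstokes} (and, say, $\eta\le 1$ so that $a-1<0$ and the data spaces behave as expected near the axis), and assemble: for $0\le a<\eta$, the sum of the interior estimate and the finitely many vertex estimates, each of whose right-hand side is dominated by $\|\bbf\|_{\maK^m_{a-1,-}(\Omega)\times\maK^m_{a-1,+}(\Omega)}+\|\bbf\|_{H^1_{-,0}(\Omega)'\times H^1_{+,0}(\Omega)'}$, and then absorb the second term using the continuous embedding $\maK^m_{a-1,\pm}(\Omega)\hookrightarrow H^1_{\mp,0}(\Omega)'$ (which follows from an estimate like \eqref{27}: $\|v\|_{H^1_0(\Omega)'}\le\|\vartheta^{1-a}v\|_{L^2_1}\le C\|v\|_{\maK^0_{a-1,1}}$ when $a<1$) to conclude with the single data norm on the right.

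\textbf{Main obstacle.} The delicate point is the bookkeeping in the translation step near $Q^z$: one must verify that the $3$D weighted $L^2$ norms with weight $\vartheta$ (= distance to the vertex, which equals $\rho$) convert cleanly, under $dxdydz=r\,dr\,dz\,d\phi$ and the trigonometric relations between $(u_1,u_2,u_3)$ and $(u_r,u_\theta,u_z)$, into the $2$D weighted $L^2_1$ norms defining $\maK^{m+2}_{a+1,1}$, \emph{and} that the derivative count matches — an $|\alpha|$th Cartesian derivative in $3$D corresponds, after differentiating the $\cos\phi$/$\sin\phi$ factors and using $\pa_\phi$, to $\pa_c^\beta$-type derivatives in $(r,z)$ of the same total order together with lower-order terms carrying compensating powers of $r^{-1}$; this is exactly what the $H^m_\pm$ vanishing conditions and the extra $r^{-1}$-weighted integrals in Definition~\ref{def.regular1} are designed to absorb, but checking that the indices line up for all $m$ simultaneously (rather than just $m=0,1$) requires care, and is the one place where Lemmas~\ref{lem2}--\ref{lem3} and Proposition~\ref{prop.1} must be invoked in a genuinely combined way.
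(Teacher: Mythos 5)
Your overall strategy is the paper's: localize at the vertices, use Lemma \ref{lem.2dstokes} at off-axis corners, pass to the 3D problem via Proposition \ref{prop.1} together with Lemmas \ref{lem2}--\ref{lem3} at on-axis corners, take $\eta=\min_i\eta_i$, and absorb the dual-norm term. However, there is a genuine gap in your treatment of the region away from the vertices. You claim that on $\Omega\setminus\bigcup_i\maV_i/2$ one has ``$r$ bounded away from $0$'' and that ``the usual interior elliptic regularity for the Stokes system'' applies. This is false whenever $\Gamma_0=\pa\Omega\cap\{r=0\}$ has positive measure (which is assumed throughout): the strip along the axis away from the vertices lies in no $\maV_i$, the coefficients $r^{-1}\pa_r$ and $r^{-2}$ of the reduced 2D system are singular there, and the estimates needed are in the weighted norms $L^2_1$, $H^m_\pm$, so 2D interior regularity cannot be invoked. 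The paper closes exactly this hole in \eqref{eqn.subset}: on $\Omega_{sub}$ it goes back to the 3D problem through the isomorphism \eqref{eqn.prop1} (the rotated axis segment is \emph{interior} to $\tilde\Omega$, so 3D interior regularity and well-posedness apply), and only then returns to the 2D weighted norms. As written, your covering either omits this axis strip or handles it with an argument that does not apply, so the global assembly does not go through without adding this step.

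A secondary imprecision: for the term $\|\vartheta^{-a}r^{-1}u_r\|_{L^2_1}$ near a vertex $Q^z$ you argue that the zeroth-order 3D bound $\|\vartheta^{-a-1}\tilde u_k\|_{L^2(\tilde\maV/2)}$ ``encodes'' it after the cylindrical change of variables. Since $r\leq\vartheta$ near such a vertex, $\vartheta^{-a-1}\leq\vartheta^{-a}r^{-1}$, so the pure weighted $L^2$ bound does not dominate the $r^{-1}$-weighted quantity. What is actually needed is the first-derivative level of the 3D weighted estimate combined with the $H^1_-$ part of the isomorphism \eqref{eqn.prop1} (whose norm contains the $\int u_r^2 r^{-1}\,drdz$ term, i.e.\ a Hardy-type control in $r$ coming from the vanishing of $u_r$ on the axis); this is how the chain \eqref{eqn.3rd} in the paper absorbs $\|\vartheta^{-a}r^{-1}u_r\|_{L^2_1(\maV/2)}$ into $\big(\sum_{l\leq m+2}\|\rho^{-a-1+l}\tilde\bu\|^2_{[H^l(\tilde\maV/2)]^3}\big)^{1/2}$. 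Your intuition (``$u_r$ vanishing on the axis forces the factor $r^{-1}$'') points at the right mechanism, but the justification must run through the derivative-level norms, not the zeroth-order weight alone.
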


\begin{proof}
Let $\omega_i>0$ be  the least positive real part of the eigenvalues of the operator pencil  for the Stokes operator in the neighborhood of the vertex $Q_i$ as in Lemmas \ref{lem.2dstokes} and \ref{lem.3dstokes}. Let 
\begin{eqnarray}
\eta_i=\omega_i, \qquad {\rm{if}}\ Q_i\notin \{r=0\};\label{1111}\\
\eta_i=\omega_i+1/2, \qquad {\rm{if}}\ Q_i\in \{r=0\}.\label{2222}
\end{eqnarray}
Define
\begin{eqnarray}\label{eta}
\eta:=\min_i(\eta_i).
\end{eqnarray} 

Recall that the weighted  space $\maK^{m}_{a, 1}$ (resp. $\maK^m_{a, +}$ and $\maK^m_{a, -}$) is equivalent to the weighted space $H^m_{1}$ (resp. $H^m_{+}$ and $H^m_{-}$) in a subdomain  $\Omega_{sub}\subset\Omega$ that is away from the vertex set. Based on the isomorphism in \eqref{eqn.prop1} and the well-posedness  and the usual interior regularity estimate for the 3D Stokes problem, we have
\begin{eqnarray}
\|\vartheta^{-a}r^{-1}u_r\|_{L^2_1(\Omega_{sub})}+\|\bu\|_{[\maK^{m+2}_{a+1, 1}(\Omega_{sub})]^2}+\|p\|_{\maK^{m+1}_{a, 1}(\Omega_{sub})} \leq C\|\tilde\bbf\|_{[H^{m}(\tilde\Omega')]^3}+\|\tilde\bbf\|_{H^{-1}(\tilde\Omega)}\nonumber\\
\leq C(\|\bbf\|_{\maK^{m}_{a-1, -}(\Omega')\times \maK^{m}_{a-1, +}(\Omega')}+\|\bbf\|_{\maK^{0}_{a-1, -}(\Omega)\times \maK^{0}_{a-1, +}(\Omega)}), \label{eqn.subset}
\end{eqnarray}
where $\Omega_{sub}\subset\subset\Omega'\subset\subset \Omega$ and $\tilde\Omega'=\Omega'\times[0, 2\pi)$ is from the rotation of $\Omega'$ about the $z$-axis.

Let $\maV$ be the neighborhood of a vertex $Q^r$ away from the $z$-axis. By Lemma \ref{lem.2dstokes} and the fact that $r$ is bounded away from 0 on $\maV$,
\begin{eqnarray}\label{eqn.qa}
&&\|\vartheta^{-a}r^{-1}u_r\|_{L^2_1(\maV/2)}+\|\bu\|_{[\maK^{m+2}_{a+1, 1}(\maV/2)]^2}+\|p\|_{\maK^{m+1}_{a, 1}(\maV/2)}\\
&&\leq C(\|\bbf\|_{[\maK^{m}_{a-1, 1}(\maV)]^2}+\|\bbf\|_{H^1_{-,0}(\Omega)'\times H^1_{+,0}(\Omega)'})\nonumber\\
&&\leq C(\|\bbf\|_{\maK^{m}_{a-1, -}(\maV)\times\maK^{m}_{a-1, +}(\maV)}+\|\bbf\|_{\maK^{0}_{a-1, -}(\Omega)\times \maK^{0}_{a-1, +}(\Omega)}).
\end{eqnarray}

We now show the estimates in $\maV$, the small neighborhood of a vertex $Q^z$ on the $z$-axis. By Lemma \ref{lem4}, we first have for any $0\leq l\leq m$,
\begin{eqnarray}\label{eqn.step1}
\|\rho^{1-a+l}\bbf\|_{[H^l_1(\maV)]^2}\leq C\|\bbf\|_{[\maK^m_{a-1, 1}(\maV)]^2}.
\end{eqnarray}
Then, for $f_r\in \maK^m_{a-1, -}(\maV)$, \eqref{eqn.step1} and the condition in \eqref{eqn.ss1}
$$
 \int_{\Omega}\big(\pa_r^{2i}(\vartheta^{1-a+l}f_r)\big)^2r^{-1}drdz<\infty, \quad 0\leq i\leq (l-1)/2
$$
lead to $\rho^{1-a+l}f_r\in H^l_{-}(\maV)$. Similarly, using \eqref{eqn.step1} and the condition in \eqref{eqn.ss2}, we conclude that for $f_z\in \maK^m_{a-1, +}(\maV)$,  $\rho^{1-a+l}f_r\in H^l_{+}(\maV)$. Then, by Lemma \ref{lem211}, the isomorphism  in \eqref{eqn.prop1}, and the definitions of the weighted spaces in \eqref{eqn.s2}, \eqref{eqn.s1}, \eqref{eqn.ss2}, and \eqref{eqn.ss1},
\begin{eqnarray}\label{eqn.prev}
&&\sum_{|\alpha |\leq m}\|\rho^{1-a+|\alpha|}\pa^\alpha\tilde \bbf\|^2_{[L^2(\tilde\maV)]^3}\leq C\sum_{0\leq l\leq m}\|\rho^{1-a+l}\tilde\bbf\|^2_{[H^l(\tilde\maV)]^3}\nonumber\\
&&\leq C \sum_{0\leq l\leq m}\|\rho^{1-a+l}\bbf\|^2_{H^l_-(\maV)\times H^l_+(\maV)}\leq C\|\bbf\|^2_{\maK^m_{a-1, -}(\maV)\times \maK^m_{a-1, +}(\maV)}.
\end{eqnarray}
Then, by Lemma \ref{lem3},   the isomorphism in \eqref{eqn.prop1}, Lemma \ref{lem2}, Lemma \ref{lem.3dstokes}, and \eqref{eqn.prev}, we have
\begin{eqnarray}
&&\|\vartheta^{-a}r^{-1}u_r\|_{L^2_1(\maV/2)}+\|\bu\|_{[\maK^{m+2}_{a+1, 1}(\maV/2)]^2}+\|p\|_{\maK^{m+1}_{a, 1}(\maV/2)}\nonumber\\
&&\leq C\big(\|\vartheta^{-a}r^{-1}u_r\|_{L^2_1(\maV/2)}+(\sum_{l\leq m+2}\|\rho^{-a-1+l}\bu\|^2_{[H^l_1(\maV/2)]^2})^{1/2}+(\sum_{l\leq m+1}\|\rho^{-a+l}p\|^2_{H^l_1(\maV/2)})^{1/2}\big)\nonumber\\
&&\leq C\big((\sum_{l\leq m+2}\|\rho^{-a-1+l}\tilde\bu\|^2_{[H^l(\tilde\maV/2)]^3})^{1/2}+(\sum_{l\leq m+1}\|\rho^{-a+l}\tilde p\|^2_{H^l(\tilde\maV/2)})^{1/2}\big)\nonumber\\
&&\leq C\big((\sum_{k=1}^3\sum_{|\alpha|\leq m+2}\|\rho^{-a-1+|\alpha|}\pa^{\alpha}\tilde u_k\|^2_{[L^2(\tilde\maV/2)]^3})^{1/2}+(\sum_{|\alpha|\leq m+1}\|\rho^{-a+|\alpha|}\pa^{\alpha}\tilde p\|^2_{L^2(\tilde\maV/2)})^{1/2}\big)\nonumber\\
&&\leq C\Big(\big(\sum_{k=1}^3\sum_{|\alpha|\leq{m}}\|\vartheta^{-a+1+|\alpha|}\pa^{\alpha}\tilde f_k\|^2_{L^2(\tilde{\maV})}\big)^{1/2}+\|\vartheta^{-a+1}\tilde\bbf\|_{[L^2(\tilde\Omega)]^3}\Big)\nonumber\\
&&\leq C(\|\bbf\|_{\maK^m_{a-1, -}(\maV)\times \maK^m_{a-1, +}(\maV)}+\|\bbf\|_{\maK^0_{a-1, -}(\Omega)\times \maK^0_{a-1, +}(\Omega)}.\label{eqn.3rd}
\end{eqnarray}

The proof is completed by combining \eqref{eqn.subset}, \eqref{eqn.qa}, and \eqref{eqn.3rd}.
\end{proof}
\begin{remark}
Note that the regularity estimate in Theorem \ref{thm.stokesreg} is up to any order depending on the regularity of the given data. The calculation of the local indices in \eqref{1111}, \eqref{2222} and the global index in \eqref{eta} will also be useful to justify our optimal finite element approximation in Section \ref{sec4}.
\end{remark}

\section{The finite element approximation} \label{sec4}
We discuss the  finite element approximation of the axisymmetric equation \eqref{eqn.5} in polygonal domains. We are aware that a few mixed finite element formulations have proved to be stable (e.g., P1isoP2/P1 and Taylor-Hood elements) for our target problem \cite{BBD06, LL11}. Since the solution may present different singularities near vertices on or away from the $z$-axis, the approximation properties of these methods similarly depend on the regularity of the solution and the best approximations from the discrete subspaces. 
Our focus, rather than the stability issue of the mixed methods, will be on the construction of speical finite element spaces that provide numerical solutions with optimal convergence rates in the presence of singular solutions of equation \eqref{eqn.5}. Although our approach applies to other mixed methods, to simplify the presentation, we in particular concentrate on the Taylor-Hood mixed method. 

\subsection{The mixed forumulation}
Let $\maT_n=\{T_i\}$ be a triangulation of the domain $\Omega$ with triangles $T_i$. For a bounded domain $G\subset\mathbb R^2$, let $\maP^k(G)$ be the space of polynomials of degree $k$ on $G$.  We denote the space of continuous piecewise polynomials of degree $k$, associated to the triangulation $\maT_n$, by
\begin{eqnarray}\label{eqn.p}
P^k(\Omega)=\{p\in \maC^0(\Omega),\ p|_T\in \maP^k(T),\ \forall\ T\in\maT_n\}.
\end{eqnarray}
The subspace of mean zero functions is 
\begin{eqnarray*}
S_n^k=\{p\in P^k(\Omega),\ \int_\Omega prdrdz=0\}.
\end{eqnarray*}
Let $\mathbf V_n^k\in H^1_{-, 0}(\Omega)\times H^1_{+, 0}(\Omega)$ be the space with the boundary condition
\begin{eqnarray*}
\mathbf V_n^k=\{\bv=(v_r, v_z)\in [P^k(\Omega)]^2,\ v_r|_{\pa\Omega}=0,\ v_z|_\Gamma=0\}.
\end{eqnarray*}
Then, the Taylor-Hood finite element approximation for equation \eqref{eqn.5} is: For $k\geq 1$, find $(\bu_n, p_n)\in \mathbf V_n^{k+1}\times S^k_n$, such that for any $(\bv_n, q_n)\in \mathbf V^{k+1}_n\times S^k_n$,
\begin{eqnarray}\label{eqn.35}
\left\{\begin{array}{ll}
a(\bu_n, \bv_n)+ b(\bv_n,   p_n) =\int_{\Omega}\bbf\cdot\bv_n\\
 b(\bu_n,  q_n)=0,
\end{array}\right.
\end{eqnarray}
where $a(\cdot,\cdot)$ and $b(\cdot,\cdot)$ have the same formulation as in \eqref{eqn.4}, but act on  $\mathbf V^{k+1}_n\times S^k_n$. Under mild assumptions on the triangulation $\maT_n$ \cite{LL11}, the Taylor-Hood approximation satisfies the LBB inf-sup condition 
\begin{eqnarray*}
\sup_{0\neq \bv_n\in \mathbf V_n^{k+1}}\frac{b(\bv_n,  q_n)}{\|\bv_n\|_{H^1_{-}(\Omega)\times H^1_+(\Omega)}}\geq C\|q_n\|_{L^2_1(\Omega)},\quad \forall \ q_n\in S^{k}_n.
\end{eqnarray*}
Therefore, the finite element approximation is comparable to the best approximation from the space $\mathbf V^{k+1}_n\times S^k_n$,
\begin{eqnarray}
&&\|\bu-\bu_n\|_{H^1_{-}(\Omega)\times H^1_+(\Omega)}+\|p-p_n\|_{L^2_1(\Omega)}\nonumber\\
&&\leq C(\inf_{\bv_n\in \mathbf V^{k+1}_n}\|\bu-\bv_n\|_{H^1_{-}(\Omega)\times H^1_+(\Omega)}+\inf_{q_n\in S^k_n}\|p-q_n\|_{L^2_1(\Omega)}).\label{eqn.infapprox}
\end{eqnarray}

Recall the part of the boundary $\Gamma_0:=\pa\Omega\cap\{r=0\}$. For the local approximation property of the finite element solution $(\bu_n, p_n)$, we first recall the following interpolation operators from \cite{LL11}. 

For every node $x_i\in\pa\Omega$, we associated it to an edge $e(x_i)$ so that $x_i \in e(x_i)$. We require that $e(x_i)\cap\Gamma_0=\emptyset$  unless $x_i\in \Gamma_0$ and $e(x_i)\subset \Gamma$ if $x_i\in \Gamma$. Let $T(x_i)$ be a triangle containing $e(x_i)$, such that $T(x_i)\cap\Gamma_0=\emptyset$ if $e(x_i)\cap\Gamma_0=\emptyset$. Then, we define the local operator $\pi_i:H^1_+(T(x_i))\rightarrow \maP^{k}(e(x_i))$ by,
\begin{equation*}
\int_{e(x_i)} (\pi_i v)\psi r drdz = \int_{e(x_i)} v\psi r drdz,\quad \forall\ v \in H^1_+(T(x_i)), \, \forall\ \psi \in \mathcal{P}^{k}(e(x_i)).
\end{equation*}
For a node $x_i\notin\pa\Omega$, we associate it with a triangle $T(x_i)$, such that $x_i \in T(x_i)$ and define $\pi_i : H^1_+(T(x_i)) \rightarrow \mathcal{P}^{k}(T(x_i))$ by, 
\begin{equation*}
\int_{T(x_i)} (\pi_i v) \psi r dr dz = \int_{T(x_i)} v\psi r dr dz, \quad \forall \ v \in H^1_+(T(x_i)), \quad \forall \ \psi\in \mathcal{P}^{k}(T(x_i)).
\end{equation*}
Let $\phi_i$ be the usual finite element basis function at $x_i$. The interpolation operator $\Pi_{k, n}^+ : H^1_{+}(\Omega) \rightarrow P^{k}(\Omega)$ is 
\begin{equation}\label{interpo1}
\Pi_{k, n}^+ v := \sum_{i} (\pi_i v)(x_i) \phi_i, \quad \forall\ v \in H_+^1(\Omega).
\end{equation}
In addition, another operator $\Pi^-_{k, n}:H^1_-(\Omega)\rightarrow P^{k}(\Omega)$ was introduced for functions in $H^1_-(\Omega)$ as follows. For a node $x_i\in \Gamma_0$, we choose an edge $e(x_i)$ containing $x_i$ such that $e(x_i)\subset \Gamma_0$. Let $T(x_i) \in \mathcal{T}_n$ be a triangle that contains $e(x_i)$. We define the local operator $\pi_{i,r} : H_-^1(T(x_i)) \rightarrow \mathcal{P}^{k}(e(x_i))$ by
\begin{equation*}
\int_{e(x_i)}(\pi_{i,r}v) \psi drdz = \int_{e(x_i)} v\psi  drdz, \quad \forall \ v \in H^1_-(T(x_i)), \quad \forall \ \psi \in \mathcal{P}^{k}(e(x_i)). 
\end{equation*}
Then, the interpolation operator $\Pi_{k, n}^- : H^1_{-}(\Omega) \mapsto P^{k}(\Omega)$ is defined by
\begin{equation}\label{interpo2}
\Pi_{k, n}^- v = \sum_{\{i, x_i \notin \Gamma_0\}} \Pi_{k, n}^+ v(x_i) \phi_i + \sum_{\{i, x_i\in\Gamma_0\}} \pi_{i,r} v(x_i)\phi_i.
\end{equation}

With a weighted trace estimate. it has been shown that the interpolation operators $\Pi_{k, n}^+ : H^1_{+}(\Omega) \rightarrow P^{k}(\Omega)$ and $\Pi_{k, n}^- : H^1_{-}(\Omega) \rightarrow P^{k}(\Omega)$ are well-defined and preserve the zero boundary conditions  for functions in $H^1_{+, 0}(\Omega)$ and in $H^1_{-, 0}(\Omega)$, respectively. 
 The interpolations are also invariant for  functions in $P^k(\Omega)$. Let $T$ be a triangle in the triangulation $\maT_n$ and $U_T$ be the union of triangles intersecting $T$.  We have (Lemmas A.6 and A.7 in \cite{LL11})
\begin{eqnarray}
&\|\Pi_{{k, n}}^+ v\|_{L^2_1(T)}\leq C (h_T|v|_{H^{1}_1(U_T)}+\|v\|_{L^2_1(U_T)}),\label{newst1}\\
&|\Pi_{{k, n}}^+ v|_{H^1_1(T)}\leq C (|v|_{H^{1}_1(U_T)}+h_T^{-1}\|v\|_{L^2_1(U_T)}),\label{st2}\\
&\|r^{-1}\Pi_{{k, n}}^- v\|_{L^2_1(T)}\leq C \|v\|_{H^{1}_-(U_T)},\label{st3}\\
&\|\Pi_{{k, n}}^- v\|_{H^1_1(T)}\leq C (|v|_{H^{1}_1(U_T)}+h^{-1}_T\|v\|_{L^2_1(U_T)}),\label{st4}
\end{eqnarray}
where $h_T$ is the diameter of $U_T$.
Combining the stability and a Bramble-Hilbert Lemma in the weighted space $H^m_1(\Omega)$,   these interpolate operators consequently provide the following local approximation properties for $k\geq 1$ (Lemmas A.6 and A.8 in \cite{LL11}),
\begin{eqnarray}
&\|v-\Pi_{{k, n}}^+ v\|_{L^2_1(T)}\leq C h_T^{k+1}\|v\|_{H^{k+1}_1(U_T)}, \quad \forall \ v\in H^{k+1}_1(U_T)\label{36}\\
& |v-\Pi_{{k, n}}^+ v|_{H^1_+(T)}\leq C h_T^{k}\|v\|_{H^{k+1}_1(U_T)}, \quad \forall \ v\in H^{k+1}_1(U_T)\label{37}\\
&\|v - \Pi^-_{k, n} v\|_{H^1_-(T)} \leq  C h_T^{k}\| v\|_{H^{k+1}_1(U_T)}, \quad \forall \ v\in H^{k+1}_1(U_T)\cap H^1_-(U_T).\label{38}
\end{eqnarray}

The operators in \eqref{interpo1} and \eqref{interpo2} will be used for the approximation of the velocity. We will also need the following simpler interpolation operator from \cite{BBD06} for the approximation of the pressure.  

For each node $x_i$, we associate it with a triangle $T(x_i)$, such that $x_i\in T(x_i)$ and define $\pi^0_i$ as the $L^2_1(T(x_i))$ orthogonal projection of $v\in L^2_1(T(x_i))$ onto $\mathcal P^k(T(x_i))$:
\begin{eqnarray*}
\int_{T(x_i)} (\pi_i^0 v) \psi r dr dz = \int_{T(x_i)} v\psi r dr dz, \quad \forall \ v \in L^2_1(T(x_i)), \quad \forall \ \psi\in \mathcal{P}^{k}(T(x_i)).
\end{eqnarray*}
Then, we define
\begin{eqnarray}\label{interpo3}
\Pi_{k, n}v=\sum_{i}(\pi^0_iv)(x_i)\phi_i.
\end{eqnarray}
Using the same notation $U_T$ and $h_T$, the interpolation operator is stable (Theorem 1 in \cite{BBD06})
\begin{eqnarray}\label{st1}
\|\Pi_{k, n}v\|_{L^2_1(T)}\leq C\|v\|_{L^2_1(U_T)}
\end{eqnarray}
and yields the following approximation property
\begin{eqnarray}\label{newest11}
\|v-\Pi_{k,n} v\|_{L^2_1(T)}\leq Ch_T^{k+1}|v|_{H^{k+1}_1(U_T)}.
\end{eqnarray}

Recall the solution of the axisymmetric Stokes equation \eqref{eqn.5} may lack the regularity required in these  local estimates. These results, however, will help in our construction of special finite element spaces to approximate the singular solutions.

\subsection{Approximation of singular solutions}

\begin{algorithm} (The $\kappa$-refinement). \label{def.kappa}
Let $\kappa \in (0, 1/2]$ and $\maT$
be a triangulation of $\Omega$ such that no two vertices of
$\Omega$ belong to the same triangle of $\maT$. Then the {\em$\kappa$-refinement} of $\maT$, denoted by $\kappa(\maT)$, is obtained
by dividing each edge $AB$ of $\maT$ in two parts as follows.
If neither $A$ nor $B$ is in the vertex of $\Omega$, then we divide $AB$
into two equal parts. Otherwise, if 
$A$ is in $\mathcal Q$, we divide $AB$ into $AD$ and $DB$
such that $|AD| = \kappa |AB|$. This will
divide each triangle of $\maT$ into four triangles (Figure \ref{fig.refine}). 
\end{algorithm}
\begin{figure}
\includegraphics[scale=0.25]{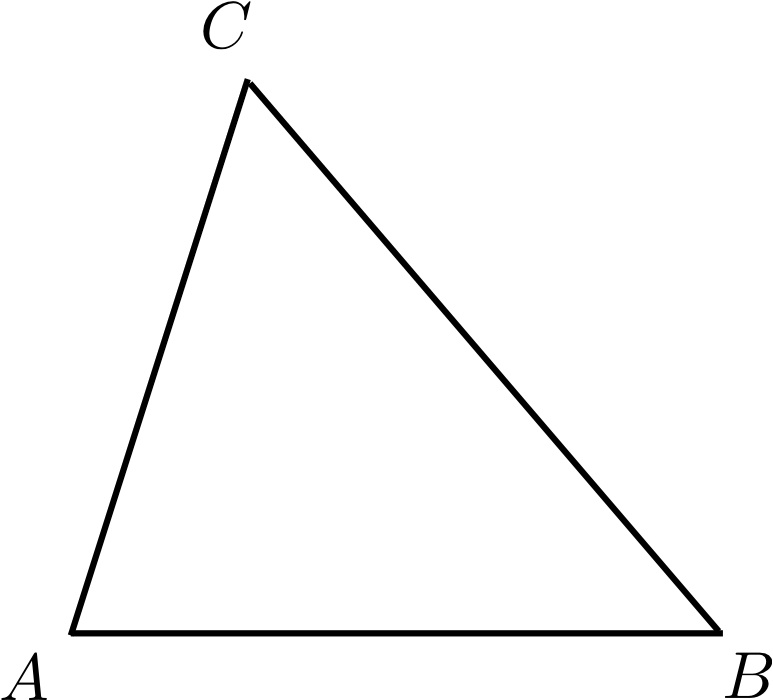}\hspace{0.5cm}\includegraphics[scale=0.25]{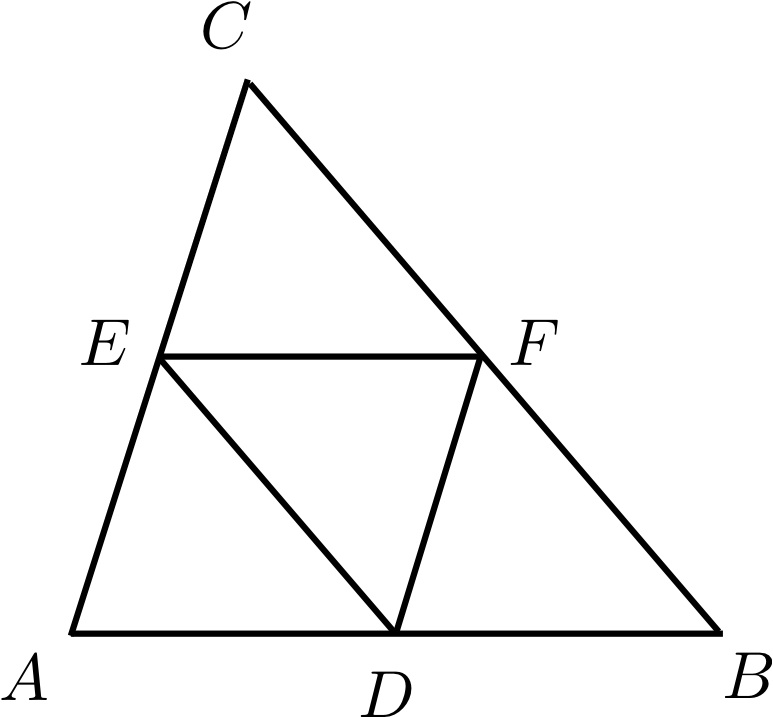}\hspace{0.5cm}\includegraphics[scale=0.25]{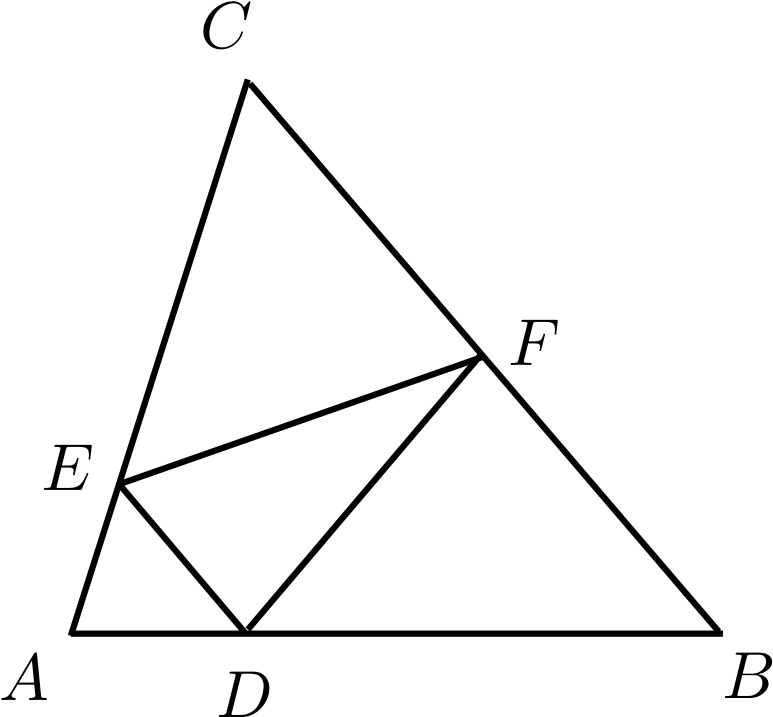}
\caption{An initial triangle $ABC$ (left); the mid-point decomposition if none of $A$, $B$, and $C$ belongs to $\mathcal Q$ (center);  the  $\kappa$-refinement if  $A \in \mathcal Q$, $\kappa=\frac{|AD|}{|AB|}=\frac{|AE|}{|AC|}$ (right).}\label{fig.refine}
\end{figure}

We now introduce the sequence of meshes. Recall  $L>0$
from Definition \ref{def.regular1}. 

\begin{definition} (The Graded Mesh). \label{def.4.4}
Suppose the initial mesh $\mathcal{T}_0$ of $\Omega$ is such that each
edge in the mesh has length $\leq L/2$ and each point in the vertex set $\mathcal Q$
is the vertex of a triangle in $\mathcal{T}_0$.  In addition,
we chose $\maT_0$ such that there is no triangle in $\maT_0$
that contains more than one point in $\mathcal Q$. Then we define
by induction $\maT_{j+1} = \kappa(\maT_j)$. 
\end{definition}

\begin{remark}
Definition \ref{def.4.4} gives a nested sequence of graded meshes by recursive applications of the $\kappa$-refinements. Note that the grading parameter $\kappa$ is fixed during the mesh generation. Then, the final triangulation contains shape-regular triangles where the class of shapes depends on the initial triangulation $\maT_0$ but not on the number of refinements. Therefore, the graded mesh satisfies the meshing requirement for the stability of the Taylor-Hood approximation of equation \eqref{eqn.5} \cite{LL11}. Since each triangle is decomposed into four small triangles for one refinement, the number of triangles in the triangulation $\maT_n$ is $\maO(4^n)$, and so is the dimension of the finite element space $\mathbf V^{k+1}_n\times S_n^k$. The $\kappa$-refinement generates triangles with different sizes adjusted for the singularity in the solution. Thus, the success of the graded mesh relies on the wise choice of the grading parameter $\kappa$, which we will elaborate on in this section.
\end{remark}

We need the following notation to carry out the analysis on graded meshes. Let $n$ be the number of $\kappa$-refinements of the domain $\Omega$. Thus, the final triangulation is $\maT_n$. Let $\mathbb T_{i, j}\subset \maT_j$, $j\leq n$, be the union of  triangles in $\maT_j$ that contain a vertex $Q_i\in\mathcal Q$ of $\Omega$. It can be seen that $\mathbb T_{i, j}\subset \mathbb T_{i, l}$ for $j\geq l$ and $\mathbb \cup_i T_{i, j}$ occupies the neighborhood of the vertex set $\mathcal Q$ in the triangulation $\maT_j$. Recall the regularity estimate for the solution and the parameter $\eta$ in Theorem \ref{thm.stokesreg}. We fix the grading parameter
\begin{eqnarray}\label{39}
\kappa:=\min(1/2, 2^{-\frac{k+1}{a}}), \qquad  {\rm{for}}\ 0<a<\eta,
\end{eqnarray} 
where $k\geq 1$ is the degree of piecewise polynomials in the Taylor-Hood finite element space $\mathbf V^{k+1}_n\times S^k_n$ associated with the triangulation $\maT_n$. Then, the error estimates for the Taylor-Hood approximation \eqref{eqn.35} are based on analysis on $\maT_n\backslash\cup_i\mathbb T_{i, 0}$, on $\cup_i\mathbb T_{i, j-1}\backslash \cup_i\mathbb T_{i, j}$, and on $\mathbb T_{i, n}$ summarized in the following lemmas. 
\begin{lemma}\label{lem.4.44}
For the space $P^k(\Omega)$ in \eqref{eqn.p} associated to  $\maT_n$ with $\kappa$ defined in \eqref{39}, let $U\subset \maT_n$ be the union of triangles that intersect $\maT_n\backslash\cup_i\mathbb T_{i, 0} $. Then,
\begin{eqnarray*}
\|u_r-\Pi^{-}_{k+1, n}u_{r}\|_{H^1_-(\maT_n\backslash\mathbb T_0 )}\leq C2^{-n(k+1)}\|u_r\|_{H^{k+2}_1(U)}\\
\|u_z-\Pi^{+}_{k+1, n}u_{z}\|_{H^1_+(\maT_n\backslash\mathbb T_0 )}\leq C2^{-n(k+1)}\|u_z\|_{H^{k+2}_1(U)}\\
\|p-\Pi_{k, n}p\|_{L^2_1(\maT_n\backslash\mathbb T_0 )}\leq C2^{-n(k+1)}\|p\|_{H^{k+1}_1(U)}.
\end{eqnarray*}
\end{lemma}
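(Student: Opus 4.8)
The plan is to reduce this to the standard (unweighted) local interpolation estimates \eqref{36}, \eqref{37}, and \eqref{38}, exploiting the fact that on the region $\maT_n\backslash\cup_i\mathbb T_{i, 0}$ we are uniformly bounded away from the vertex set $\mathcal Q$, so the weight $\vartheta$ is comparable to a positive constant and the spaces $\maK^m_{\mu, 1}$, $H^m_+$, $H^m_-$ all coincide (with equivalent norms) with their Type I counterparts $H^m_1$. First I would observe that the $\kappa$-refinement, applied $n$ times, subdivides every edge not touching a vertex of $\Omega$ into two equal halves at each step; hence every triangle $T\subset\maT_n\backslash\cup_i\mathbb T_{i,0}$ has diameter $h_T\leq C2^{-n}$, with $C$ depending only on the initial mesh $\maT_0$ (this is where the hypothesis that the region stays away from $\mathcal Q$ is used — the graded refinement only shrinks non-uniformly inside $\cup_i\mathbb T_{i,0}$). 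The triangles here are shape-regular with a constant depending only on $\maT_0$, as noted in the remark following Definition \ref{def.4.4}.

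Next I would sum the local estimates over the triangles. For the velocity components, apply \eqref{38} to $u_r$ on each $T\subset\maT_n\backslash\mathbb T_0$, giving $\|u_r-\Pi^-_{k+1,n}u_r\|_{H^1_-(T)}\le Ch_T^{k+1}\|u_r\|_{H^{k+2}_1(U_T)}\le C2^{-n(k+1)}\|u_r\|_{H^{k+2}_1(U_T)}$, then square, sum over $T$, and use finite overlap of the patches $U_T$ to bound $\sum_T\|u_r\|^2_{H^{k+2}_1(U_T)}\le C\|u_r\|^2_{H^{k+2}_1(U)}$. The same argument with \eqref{37} handles $u_z$ (using also \eqref{36} for the $L^2_1$ part of the $H^1_+$ norm if needed), and with \eqref{newest11} handles $p$. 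Here one must be slightly careful that $\Pi^\pm_{k+1,n}$ applied globally agrees, when restricted to a triangle away from $\mathcal Q$ and from the parts of $\partial\Omega$ forcing the special boundary projections, with the locally-defined operators used in \eqref{36}–\eqref{38}; this is immediate from the definitions \eqref{interpo1}, \eqref{interpo2}, since for nodes $x_i$ not in $\Gamma_0$ (and not otherwise constrained) the operator reduces to the standard one, so the local approximation lemmas apply verbatim.

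I do not expect a serious obstacle here — the statement is essentially bookkeeping. The one point requiring mild care is the uniform bound $h_T\le C2^{-n}$: one should note that although the \emph{global} mesh $\maT_n$ is highly graded, a triangle intersecting $\maT_n\backslash\cup_i\mathbb T_{i,0}$ descends (through the nested refinements) from a triangle of $\maT_0$ none of whose current sub-edges has a vertex of $\Omega$ among the relevant endpoints being refined, so each of its edges has been bisected at every stage, yielding the factor $2^{-n}$ exactly. A secondary point is making sure the patch $U$ (triangles of $\maT_n$ meeting $\maT_n\backslash\cup_i\mathbb T_{i,0}$) is itself contained in a region where $H^{k+2}_1$ makes sense for the solution, i.e. away from the vertices; this holds because $u_r,u_z,p$ are smooth (in the Type I sense) on any subdomain disjoint from $\mathcal Q$ by Theorem \ref{thm.stokesreg} together with the norm equivalence there, so the right-hand sides are finite. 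Assembling these gives the three displayed inequalities with constant $C$ independent of $n$.
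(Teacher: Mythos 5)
Your argument is essentially the same as the paper's: both rest on the observation that away from the vertex neighborhoods the $\kappa$-refinement is plain midpoint bisection, so the mesh size on $U$ is $\mathcal{O}(2^{-n})$, after which the local estimates \eqref{36}, \eqref{37}, \eqref{38}, and \eqref{newest11} are summed over the triangles. Your additional remarks (finite overlap of the patches $U_T$, consistency of the global interpolants with the local definitions, finiteness of the $H^{k+2}_1$ norms away from $\mathcal Q$) are correct details that the paper leaves implicit, so the proposal is fine.
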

\begin{proof}
Assume $U$ is away from the vertices of the domain (this is true when $n>2$). Then, based on Definition \ref{def.4.4}, the mesh size on $U$ is $\maO(2^{-n})$. Summing up the estimates in \eqref{36}, \eqref{37},  \eqref{38}, and \eqref{newest11} completes the proof.
\end{proof}

For the estimates on $\mathbb T_{i, 0}$, the union of initial triangles containing the vertex $Q_i$, we consider the new coordinate system that is a simple translation of the old $rz$-coordinate system, now with $Q_i$ at the origin. Then, for a subset $G\subset \mathbb T_{i, 0}$ and $0<\lambda<1$, we define the dilation of $G$ and of a function as follows
\begin{eqnarray*}
G_\lambda:=G/\lambda, \quad v_{\lambda}(r_\lambda, z_\lambda):=v(r, z), \quad {\rm{for}}\ (r_\lambda, z_\lambda)=(\lambda^{-1}r, \lambda^{-1}z)\in G_\lambda.
\end{eqnarray*}
Then,
\begin{lemma}\label{lem.dilation}
Suppose $G_\lambda\subset \maV_i$. Then, if $Q_i$ is on  the $z$-axis,
\begin{eqnarray*}
\|v_{\lambda}\|_{\maK^m_{a, 1}(G_\lambda)} = \lambda^{a-3/2}\|v\|_{\maK^m_{a, 1}(G)},\\
\|r^{-1}_\lambda v_{\lambda}\|_{L^2_1(G_\lambda)} = \lambda^{-1/2}\|r^{-1}v\|_{L^2_1(G)};
\end{eqnarray*}
 if $Q_i$ is not on the $z$-axis, 
 \begin{eqnarray*}
 C\lambda^{a-1}\|v\|_{\maK^m_{a, 1}(G)}\leq\|v_{\lambda}\|_{\maK^m_{a, r}(G_\lambda)} \leq C\lambda^{a-1}\|v\|_{\maK^m_{a, 1}(G)}.
 \end{eqnarray*}
\end{lemma}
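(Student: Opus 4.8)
The plan is to prove both identities by a direct change of variables $(r,z)\mapsto(r_\lambda,z_\lambda)=(\lambda^{-1}r,\lambda^{-1}z)$ in each integral defining the norms, keeping track of four scaling factors: the Jacobian $dr\,dz=\lambda^{2}\,dr_\lambda\,dz_\lambda$; the chain-rule identity $\pa_c^\alpha v(r,z)=\lambda^{-|\alpha|}(\pa_c^\alpha v_\lambda)(r_\lambda,z_\lambda)$; the scaling $\vartheta=\rho=\lambda\rho_\lambda$ of the distance-to-vertex weight, where $\rho_\lambda=\sqrt{r_\lambda^{2}+z_\lambda^{2}}$, which is legitimate precisely because the hypothesis $G_\lambda\subset\maV_i$ forces $\vartheta=|x-Q_i|=\rho$ on all of $G_\lambda$; and the scaling of the $r$-weight in $L^2_1$.

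First consider a vertex $Q_i=Q^z$ on the $z$-axis. The local frame is obtained from the $rz$-frame by a pure translation along the $z$-axis, so the radial coordinate is unchanged; that is, the function $r$ is literally the same in both frames and $r=\lambda r_\lambda$ under the dilation. It suffices to treat one term of $\|v\|^2_{\maK^m_{a,1}(G)}=\sum_{l\le m}\sum_{|\alpha|=l}\int_G(\vartheta^{l-a}\pa_c^\alpha v)^2 r\,dr\,dz$. For $|\alpha|=l$ the change of variables collects $\lambda^{2(l-a)}\cdot\lambda^{-2l}\cdot\lambda\cdot\lambda^{2}=\lambda^{3-2a}$, so $\int_G(\vartheta^{l-a}\pa_c^\alpha v)^2 r\,dr\,dz=\lambda^{3-2a}\int_{G_\lambda}(\rho_\lambda^{l-a}\pa_c^\alpha v_\lambda)^2 r_\lambda\,dr_\lambda\,dz_\lambda$, and the right-hand integral is a term of $|v_\lambda|^2_{\maK^{l}_{a,1}(G_\lambda)}$ since $\rho_\lambda=\vartheta$ on $G_\lambda$. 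Summing over $\alpha$ and $l$ and taking square roots gives $\|v_\lambda\|_{\maK^m_{a,1}(G_\lambda)}=\lambda^{a-3/2}\|v\|_{\maK^m_{a,1}(G)}$. The same computation with no derivative and the weight $r^{-1}$ in place of $r$ (collecting $\lambda^{-1}\cdot\lambda^{2}=\lambda$) yields $\|r^{-1}v\|^2_{L^2_1(G)}=\int_G v^2 r^{-1}\,dr\,dz=\lambda\|r_\lambda^{-1}v_\lambda\|^2_{L^2_1(G_\lambda)}$, which is the second identity.

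Next consider a vertex $Q_i=Q^r$ away from the $z$-axis. Here the point is that, for $\maV_i$ small (which holds by the choice of $L$), the coordinate $r$ is bounded above and below on $\maV_i$ by positive constants depending only on the distance from $Q_i$ to the $z$-axis; consequently $L^2_1$ is equivalent to the plain $L^2$ space and $\maK^m_{a,1}$ is equivalent, with constants depending only on the domain, to the standard Kondrat$^\prime$ev-type space with Lebesgue measure. Performing the same change of variables now against $dr_\lambda\,dz_\lambda$ collects the factor $\lambda^{2(l-a)}\cdot\lambda^{-2l}\cdot\lambda^{2}=\lambda^{2-2a}$, so $\|v_\lambda\|_{\maK^m_{a,1}(G_\lambda)}$ is, up to those domain-dependent constants, equal to $\lambda^{a-1}\|v\|_{\maK^m_{a,1}(G)}$.

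The argument is essentially bookkeeping, and I expect no real obstacle; the only points requiring care are that the weight $\vartheta$ must coincide with $\rho$ throughout $G_\lambda$ — which is exactly the role of the hypothesis $G_\lambda\subset\maV_i$ — and that, in the non-axis case, the equivalence constants relating $\maK^m_{a,1}$ to the Lebesgue-measure space stay independent of $\lambda$, which they do since they depend only on $L$ and on the distance from $Q_i$ to the $z$-axis, not on the dilation.
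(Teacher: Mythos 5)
Your proposal is correct and follows essentially the same route as the paper: a direct change of variables tracking the scalings of the weight $\vartheta=\rho$, the derivatives, the measure, and the $r$ (or $r^{-1}$) factor in the on-axis case, and in the off-axis case using the uniform bounds $0<A\le r^{-1}\le B$ on $\maV_i$ to reduce to the Lebesgue-measure computation with $\lambda$-independent equivalence constants. The bookkeeping exponents you obtain ($\lambda^{3-2a}$, $\lambda$, and $\lambda^{2-2a}$ respectively) match the paper's.
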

\begin{proof}
 Note that on both $G_\lambda\subset \maV_i$ and $G\subset
\maV_i$, $\vartheta(r, z)$ is equal to the distance from $(r, z)$ to $Q_i$,
therefore $\vartheta(r_\lambda, z_\lambda)=\lambda^{-1}\vartheta(r, z)$. Then, if $Q_i\in \{r=0\}$, 
\begin{eqnarray*}
&&	\|v_\lambda\|^2_{\maK^m_{a, 1}(G_\lambda)}  =  \sum_{j+k\leq
	m}\int_{G_\lambda} |\vartheta^{j+k-a}(r_\lambda, z_\lambda)\partial_{r_\lambda}^j\partial^k_{z_\lambda}
	v_\lambda(r_\lambda, z_\lambda)|^2 r_\lambda dr_\lambda dz_\lambda\\ 
	&& = \sum_{j+k\leq m} \int_{G}
	|\lambda^{a-j-k}\vartheta^{j+k-a}(r, z) \lambda^{j+k} \partial_r^{j}
	\partial^k_z v(r, z)|^2 \lambda^{-3}r drdz\\ && =
	\lambda^{2a-3}\sum_{j+k\leq m}\int_{G} |\vartheta^{j+k-a}(r,z)
	\partial_r^{j} \partial^k_z v(r, z)|^2 rdrdz = 
	\lambda^{2a-3}\|v\|^2_{\maK^m_{a, 1}(G)}.
\end{eqnarray*}
In addition, 
\begin{eqnarray*}
\|r^{-1}_\lambda v_{\lambda}\|_{L^2_1(G_\lambda)}^2=\int_{G_\lambda} |v_\lambda(r_\lambda, z_\lambda)|^2 r_\lambda^{-1} dr_\lambda dz_\lambda\\
=\lambda^{-1}\int_{G} |v(r, z)|^2 r^{-1} dr dz=\lambda^{-1/2}\|r^{-1}v\|_{L^2_1(G)}.
\end{eqnarray*}

On the other hand, if $Q_i\notin\{r=0\}$, we notice $A\leq r^{-1}\leq B$ on $\mathcal{V}_i$, for constants $A$ and $B$ depending on the domain $\Omega$. Therefore, we have,
$$
A \|v(r, z)\|^2_{\maK^m_{a, 1}(D)} \leq \sum_{j+k\leq
	m}\int_{D} |\vartheta^{j+k-a}(r, z)\partial_r^j\partial^k_z
	v(r, z)|^2 drdz\leq B\|v(r, z)\|^2_{\maK^m_{a, 1}(D)},$$
where $D\subset \maV_i$ is any subset of $\maV_i$. We thus have
\begin{eqnarray*}
	&&\|v_\lambda(r_\lambda, z_\lambda)\|^2_{\maK^m_{a, 1}(G_\lambda)}  \leq  A^{-1}\sum_{j+k\leq
	m}\int_{G_\lambda} |\vartheta^{j+k-a}(r_\lambda, z_\lambda)\partial_{r_\lambda}^j\partial^k_{z_\lambda}
	v_\lambda(r_\lambda, z_\lambda)|^2 dr_\lambda dz_\lambda\\ && = A^{-1}\sum_{j+k\leq m} \int_{G}
	|\lambda^{a-j-k}\vartheta^{j+k-a}(r, z) \lambda^{j+k} \partial_r^{j}
	\partial^k_z v(r, z)|^2 \lambda^{-2}drdz\\ && =
	A^{-1}\lambda^{2a-2}\sum_{j+k\leq m}\int_{G} |\vartheta^{j+k-a}(r,z)
	\partial_r^{j} \partial^k_z v(r, z)|^2drdz \leq A^{-1}B
	\lambda^{2a-2}\|v\|^2_{\maK^m_{a, 1}(G)}.
\end{eqnarray*}
We note the inequality in the opposite direction can be justified with the same process, which completes the proof.
\end{proof}

We are ready to give estimates on the region $\mathbb T_{i, j-1}\backslash \mathbb T_{i, j}$. From now on, we assume the constant $a$ in the sub-index of the space is always non-negative.

\begin{lemma}\label{proposition.4.3}
For the space $P^k(\Omega)$ in \eqref{eqn.p} associated to  $\maT_n$ with $\kappa$ defined in \eqref{39}, let $U\subset \maT_n$ be the union of triangles that intersect $G:=\mathbb T_{i, j-1}\backslash\mathbb T_{i, j} $. Let $h$ be the mesh size on $U$ and $\xi=\sup_{x\in G} \vartheta(x)$. Then, for $v\in H^1_-(U)\cap\maK^{k+1}_{a+1, 1}(U)$,
\begin{eqnarray}
     &\|r^{-1}(v-\Pi^-_{k, n} v)\|_{L^2_{1}(G)}+\|v-\Pi^-_{k, n} v\|_{H^1_{1}(G)}\leq C\xi^{a}(h/\xi)^{k}\|v\|_{\maK^{k+1}_{a+1, 1}(U)};\label{es1}
     \end{eqnarray}
     and for $v\in \maK^{k+1}_{a+1,1}(U)$,
     \begin{eqnarray}
    & \|v-\Pi^+_{k, n} v\|_{H^1_{1}(G)}\leq
    C\xi^{a}(h/\xi)^{k}\|v\|_{\maK^{k+1}_{a+1, 1}(U)},\label{es2}\\
    & \|v-\Pi_{k, n} v\|_{L^2_{1}(G)}\leq
    C\xi^{a}(h/\xi)^{k+1}\|v\|_{\maK^{k+1}_{a, 1}(U)}.\label{es3}
\end{eqnarray}
\end{lemma}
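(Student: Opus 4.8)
The plan is to reduce each estimate on the annular region $G = \mathbb T_{i,j-1}\setminus \mathbb T_{i,j}$ to a corresponding estimate on a fixed (dilated) reference region, where the weight $\vartheta$ is comparable to a constant and the weighted Sobolev norms collapse to ordinary weighted Sobolev norms of Type I; there one applies the local approximation properties \eqref{36}--\eqref{38} and \eqref{newest11} that are already available, together with the scaling identities of Lemma \ref{lem.dilation}. The central point is that on $G$ the weight $\vartheta$ satisfies $c\,\xi \le \vartheta \le \xi$ for a uniform constant $c$ (this follows from the self-similar construction of the graded mesh: $G$ sits between two successive dyadic layers $\mathbb T_{i,j-1}$ and $\mathbb T_{i,j}$, so the ratio of the largest to the smallest value of $\vartheta$ on $G$ is bounded independently of $j$), and the mesh size $h$ on $U$ scales like $\kappa^{\,j}\sim \xi$ up to a fixed factor.

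Concretely, I would first treat the case $Q_i \notin \{r=0\}$, where $r$ is bounded above and below on $\maV_i$ and all the Type I spaces $H^m_1, H^m_\pm$ are equivalent to the unweighted ones. Set $\lambda = \xi$ and dilate $G$ to $G_\lambda \subset \maV_i$ of fixed size; then $v_\lambda$ on $G_\lambda$ has $\vartheta(r_\lambda,z_\lambda)$ comparable to $1$, so $\|v_\lambda\|_{\maK^{k+1}_{a+1,1}(U_\lambda)}$ is comparable to $\|v_\lambda\|_{H^{k+1}_1(U_\lambda)}$, and on the dilated mesh (mesh size $h/\xi$) the estimates \eqref{36}--\eqref{38} and \eqref{newest11} give, e.g., $\|v_\lambda - \Pi^-_{k,n}v_\lambda\|_{H^1_-(G_\lambda)} \le C (h/\xi)^k \|v_\lambda\|_{H^{k+1}_1(U_\lambda)}$, and likewise for the $r^{-1}$-weighted $L^2_1$ term via \eqref{st3} combined with a Bramble--Hilbert argument. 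Then I would scale back: using the commutation of the interpolation operators with dilation (they are defined by local moment conditions, hence dilation-covariant) together with Lemma \ref{lem.dilation} for $Q_i \notin \{r=0\}$, the factors $\lambda^{a-1}$ on both sides cancel except for a residual $\xi^{a}$, producing the claimed bound $C\xi^a (h/\xi)^k \|v\|_{\maK^{k+1}_{a+1,1}(U)}$. For \eqref{es3} the extra power of $h/\xi$ comes from \eqref{newest11} being an $L^2_1$--$H^{k+1}_1$ estimate.

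For the case $Q_i \in \{r=0\}$, the argument is parallel but now the weight $r$ genuinely vanishes along $\Gamma_0$, so one must keep the Type I spaces $H^m_\pm$ (and their $r^{-1}$-terms) rather than replacing them by unweighted spaces. Here I would use the second group of identities in Lemma \ref{lem.dilation}, namely $\|v_\lambda\|_{\maK^m_{a,1}(G_\lambda)} = \lambda^{a-3/2}\|v\|_{\maK^m_{a,1}(G)}$ and $\|r_\lambda^{-1}v_\lambda\|_{L^2_1(G_\lambda)} = \lambda^{-1/2}\|r^{-1}v\|_{L^2_1(G)}$ (note the powers now involve the $rdrdz$ measure, i.e. the exponent $-3/2$ rather than $-1$); the point is that the same power of $\lambda$ appears on the data side $\maK^{k+1}_{a+1,1}(U)$ and, after using \eqref{newst1}--\eqref{st4} on the dilated reference configuration, the $\lambda$-powers again cancel up to $\xi^{a}$. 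The verification that $\vartheta$ on $G_\lambda$ is comparable to a constant is what makes the passage from $\maK$-norms to $H_1$-norms lossless, and the shape-regularity of the graded mesh (Remark following Definition \ref{def.4.4}) guarantees the constants in \eqref{36}--\eqref{38}, \eqref{st1}--\eqref{newest11} are uniform over $j$.

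The main obstacle I anticipate is the bookkeeping of which dilation exponent to use in each of the several norms appearing simultaneously in \eqref{es1} — the $r^{-1}L^2_1$ term, the $H^1_1$ term, and the data norm $\maK^{k+1}_{a+1,1}$ all scale with different powers of $\lambda$, and one must check that after combining \eqref{st3}, \eqref{st4}, the approximation estimates, and Lemma \ref{lem.dilation} the net exponent is exactly $\xi^a(h/\xi)^k$ with no leftover power of $\xi$ or of $h$; a secondary subtlety is confirming that $h/\xi$ is bounded by a fixed constant $<1$ so that $(h/\xi)^k$ is genuinely a small quantity and the constant $C$ absorbed from the Bramble--Hilbert lemma does not depend on $j$. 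Both are mechanical once the comparability $\vartheta \asymp \xi$ on $G$ and $h \asymp \xi$ on $U$ are pinned down from the self-similarity of the $\kappa$-refinement.
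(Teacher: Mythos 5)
Your proposal follows essentially the same route as the paper's proof: dilate $G$ by $\lambda\asymp\xi$ into the fixed neighborhood $\maV_i$, use the dilation covariance of $\Pi^{\pm}_{k,n}$ and $\Pi_{k,n}$, pass between $\maK$-norms and the Type I norms on the dilated set where $\vartheta\asymp 1$, apply \eqref{36}--\eqref{38} and \eqref{newest11} there, and scale back with Lemma \ref{lem.dilation}, extracting $\xi^{a}$ from the shift of weight index $a+1\to 1$ via $\vartheta\asymp\xi$ on $U$ (the paper even uses \eqref{38} directly for the combined $r^{-1}$-weighted and $H^1_1$ terms, so no separate Bramble--Hilbert step is needed). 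The only slip is your side remark that $h\asymp\xi$: in fact $h/\xi\asymp 2^{\,j-1-n}$ depends on $j$ and $n$, but this is neither true in general nor needed, since the lemma keeps $h/\xi$ explicit.
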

\begin{proof}
Recall the new coordinate system with $Q_i$ as the origin. Let $G_\lambda=\lambda^{-1}G$. Recall the dilation function $v_\lambda(r_\lambda, z_\lambda)=v( r, z)$. Note
that 
$(\Pi^{+}_{k, n} v)_{\lambda}=\Pi_{k,n}^+ (v_{\lambda})$ and $(\Pi^{-}_{k, n} v)_{\lambda}=\Pi_{k,n}^- (v_{\lambda})$. 
Then, we  choose  $\lambda=2\xi/L$, such that $G_\lambda\subset \maV_i$. 

If $Q_i$ is on the $z$-axis, by Lemma \ref{lem.dilation}, the definitions of the weighted spaces,  and \eqref{38}, we have
\begin{eqnarray*}
&&\|r^{-1}(v-\Pi^-_{k, n} v)\|_{L^2_{1}(G)}+\|v-\Pi^-_{k, n} v\|_{H^1_{1}(G)}\\
&&\leq C\|r^{-1}(v-\Pi^-_{k, n} v)\|_{L^2_{1}(G)}+\|v-\Pi^-_{k, n} v\|_{\maK^1_{1, 1}(G)}\\
&&=\lambda^{1/2}(\|r^{-1}_\lambda(v_\lambda-\Pi^-_{k, n} (v_\lambda)\|_{L^2_{1}(G_\lambda)}+\|v_\lambda-\Pi^-_{k, n}( v_\lambda)\|_{\maK^1_{1, 1}(G_\lambda)})\\
&&\leq C\lambda^{1/2}(\|r^{-1}_\lambda(v_\lambda-\Pi^-_{k, n} (v_\lambda)\|_{L^2_{1}(G_\lambda)}+\|v_\lambda-\Pi^-_{k, n}( v_\lambda)\|_{H^1_{1}(G_\lambda)})\\
&&\leq C\lambda^{1/2}(h/\lambda)^k\|v_\lambda\|_{H^{k+1}_1(U_\lambda)} \leq C\lambda^{1/2}(h/\lambda)^k\|v_\lambda\|_{\maK^{k+1}_{1, 1}(U_\lambda)}\\
&&\leq C(h/\xi)^k\|v\|_{\maK^{k+1}_{1,1}(U)}\leq C\xi^a(h/\xi)^k\|v\|_{\maK^{k+1}_{a+1,1}(U)}.
\end{eqnarray*}
If  $Q_i$ is not on the $z$-axis, the proof is similar. With the corresponding estimate in Lemma \ref{lem.dilation}, the definitions of the weighted spaces, and \eqref{38}, we have
\begin{eqnarray*}
&&\|r^{-1}(v-\Pi^-_{k, n} v)\|_{L^2_{1}(G)}+\|v-\Pi^-_{k, n} v\|_{H^1_{1}(G)}\\
&&\leq C\|r^{-1}(v-\Pi^-_{k, n} v)\|_{L^2_{1}(G)}+\|v-\Pi^-_{k, n} v\|_{\maK^1_{1, 1}(G)}\leq C\|v-\Pi^-_{k, n} v\|_{\maK^1_{1, 1}(G)}\\
&&\leq C\|v_\lambda-\Pi^-_{k, n}( v_{\lambda})\|_{\maK^1_{1, 1}(G_\lambda)}\leq C\|v_\lambda-\Pi^-_{k, n}(v_{\lambda})\|_{H^1_{1}(G_\lambda)}\leq  C(h/\lambda)^k\|v_\lambda\|_{H^{k+1}_1(U_\lambda)}\\
&&\leq C(h/\lambda)^{k}\|v_\lambda\|_{\maK^{k+1}_{1, 1}(U_\lambda)} \leq C(h/\xi)^k\|v\|_{\maK^{k+1}_{1, 1}(U)}\leq
C \xi^{a}(h/\xi)\|v\|_{\maK^{k+1}_{a+1, 1}(U)}.
\end{eqnarray*}
Thus, the estimate in \eqref{es1} is proved.

The estimates in \eqref{es2} and \eqref{es3} can be shown similarly using Lemma \ref{lem.dilation}, the definitions of the weighted spaces,  \eqref{36},  \eqref{37}, and \eqref{newest11}.
\end{proof}

\begin{lemma}\label{prop.4.5}
For the space $P^k(\Omega)$ in \eqref{eqn.p} associated to  $\maT_n$ with $\kappa$ defined in \eqref{39}, let $U\subset \maT_n$ be the union of triangles that intersect $G:=\mathbb T_{i, j-1}\backslash\mathbb T_{i, j} $. Then,
\begin{eqnarray*}
&\|r^{-1}(u_r-\Pi^{-}_{k+1, n}u_{r})\|_{L^2_1(G)}+\|u_r-\Pi^{-}_{k+1, n}u_{r}\|_{H^1_{1}(G)}\leq C2^{-n(k+1)}\|u_r\|_{\maK^{k+2}_{a+1, 1}(U)}\\
&\|u_z-\Pi^{+}_{k+1, n}u_{z}\|_{H^1_{1}(G)}\leq C2^{-n(k+1)}\|u_z\|_{\maK^{k+2}_{a+1, 1}(U)}\\
&\|p-\Pi_{k, n}p\|_{L^2_1(G )}\leq C2^{-n(k+1)}\|p\|_{\maK^{k+1}_{a, 1}(U)}.
\end{eqnarray*}
\end{lemma}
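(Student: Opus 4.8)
\textbf{Proof proposal for Lemma \ref{prop.4.5}.}

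The plan is to bound the total interpolation error on $G = \mathbb T_{i,j-1}\backslash\mathbb T_{i,j}$ by combining the scaled local estimate of Lemma \ref{proposition.4.3} with an explicit count of how the mesh size $h$ and the weight $\xi = \sup_{x\in G}\vartheta(x)$ behave at the $j$-th refinement level, and then summing (or rather, taking the supremum) over $j$. First I would record the two geometric facts produced by Definition \ref{def.4.4} and Algorithm \ref{def.kappa}: on the dyadic region $G$ at level $j$ one has $\xi \simeq \kappa^{j}$ (the distance to the vertex $Q_i$ shrinks by a factor $\kappa$ each refinement near the vertex), while the mesh size on $U \supset G$ satisfies $h \simeq \kappa^{j}\,2^{-(n-j)}$, because once inside $\mathbb T_{i,j-1}$ the triangles have been scaled by $\kappa^{j}$ relative to the initial mesh and then midpoint-refined $n-j$ more times. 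Hence $h/\xi \simeq 2^{-(n-j)}$, independent of $\kappa$.

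Next I would feed these into Lemma \ref{proposition.4.3}. For the velocity component $u_r$, estimate \eqref{es1} (with $k$ there replaced by $k+1$, the polynomial degree used for the velocity space $\mathbf V^{k+1}_n$) gives
\begin{eqnarray*}
\|r^{-1}(u_r-\Pi^-_{k+1,n}u_r)\|_{L^2_1(G)}+\|u_r-\Pi^-_{k+1,n}u_r\|_{H^1_1(G)}
\leq C\,\xi^{a}\,(h/\xi)^{k+1}\|u_r\|_{\maK^{k+2}_{a+1,1}(U)}
\leq C\,\kappa^{ja}\,2^{-(n-j)(k+1)}\|u_r\|_{\maK^{k+2}_{a+1,1}(U)}.
\end{eqnarray*}
Now invoke the choice $\kappa = \min(1/2,\,2^{-(k+1)/a})$ from \eqref{39}, which forces $\kappa^{a}\leq 2^{-(k+1)}$, so $\kappa^{ja}\leq 2^{-j(k+1)}$; multiplying by $2^{-(n-j)(k+1)}$ the $j$-dependence cancels and the factor collapses to $2^{-n(k+1)}$. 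The same computation with \eqref{es2} handles $u_z$ via $\Pi^+_{k+1,n}$, and with \eqref{es3} handles $p$ via $\Pi_{k,n}$ (here the polynomial degree is $k$ and one uses $\|p\|_{\maK^{k+1}_{a,1}(U)}$, the regularity asserted for the pressure in Theorem \ref{thm.stokesreg}); in each case one gets exactly $C\,2^{-n(k+1)}$ times the appropriate weighted norm on $U$. The cases $Q_i \in\{r=0\}$ and $Q_i\notin\{r=0\}$ are both already subsumed in Lemma \ref{proposition.4.3}, so no separate treatment is needed; I would only remark that on $G$, which is bounded away from the $z$-axis at a fixed fraction of $\xi$, the norms $H^1_-$ and $H^1_1$ are comparable, justifying the appearance of $H^1_1$ on the right.

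The main obstacle, and the step deserving the most care, is verifying the two scaling relations $\xi\simeq\kappa^{j}$ and $h\simeq\kappa^{j}2^{-(n-j)}$ with constants uniform in $j$ and $n$ — in particular that the shape-regularity of the triangles in $U$ (needed to apply Lemma \ref{proposition.4.3}, whose constants depend only on the shape class of the initial mesh) is genuinely preserved under the iterated $\kappa$-refinement. This is exactly the content flagged in the remark after Definition \ref{def.4.4}, so I would cite it; once that geometric bookkeeping is in place, the algebra of exponents using the definition of $\kappa$ in \eqref{39} is routine, and the lemma follows by applying the bound on each annular piece $G$ at the single relevant level $j$ (the statement is pointwise in the region $G$, not a sum over levels, so no geometric series estimate is even required here).
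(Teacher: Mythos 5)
Your proposal is correct and follows essentially the same route as the paper: feed $\xi\simeq\kappa^{j-1}$ and $h/\xi\simeq 2^{-(n-j)}$ into the scaled estimates \eqref{es1}--\eqref{es3} of Lemma \ref{proposition.4.3} (with degree $k+1$ for the velocity, $k$ for the pressure) and use $\kappa^{a}\leq 2^{-(k+1)}$ from \eqref{39} to cancel the level dependence and obtain the uniform factor $2^{-n(k+1)}$. Only note that your closing remark that $G$ is bounded away from the $z$-axis is false when $Q_i\in\Gamma_0$, but it is also unnecessary, since \eqref{es1} already gives the bound in the $H^1_1(G)$ norm together with the $r^{-1}$-weighted $L^2_1$ term.
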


\begin{proof}
Definition \ref{def.4.4} shows that the mesh on $\mathbb T_{i, j-1}\backslash\mathbb T_{i, j}$ and also on $U$ has the size $\maO( \kappa^{j-1}2^{j-1-n})$. Using the notation of Lemma \ref{proposition.4.3}, we
have $\xi=\mathcal O(\kappa^{j-1})$ on $\mathbb T_{i, {j-1}} \backslash\mathbb T_{i, {j}}$. Therefore, using Lemma \ref{proposition.4.3}, we have
\begin{eqnarray*}
&&\|r^{-1}(u_r-\Pi^{-}_{k+1, n}u_{r})\|_{L^2_1(G)}+\|u_r-\Pi^{-}_{k+1, n}u_{r}\|_{\maK^1_{1, 1}(G)}\\
&&\leq C\kappa^{(j-1)a}2^{(j-1-n)(k+1)}\|u_r\|_{\maK^{k+2}_{a+1, 1}(U)} \leq
C2^{-(j-1)(k+1)}2^{(j-1-n)(k+1)}\|u_r\|_{\maK^{k+2}_{a+1, 1}(U)}\\ 
&&= C2^{-n(k+1)}\|u_r\|_{\maK^{k+2}_{a+1, 1}(U)}.
\end{eqnarray*}
Then, we have proved the first estimate in this lemma. The last two estimates can be proved similarly by Lemma \ref{proposition.4.3} and the observation on the mesh size for the regions $G$ and $U$.
\end{proof}

The following lemma gives the error bounds on the last patch $\mathbb T_{i, n}$ of triangles that have the vertex $Q_i$ as the common node.
\begin{lemma}\label{prop.4.6}
For the space $P^k(\Omega)$ in \eqref{eqn.p} associated to  $\maT_n$ with $\kappa$ defined in \eqref{39}, let $U\subset \maT_n$ be the union of triangles that intersect $\mathbb T_{i, n} $. Then,
\begin{eqnarray*}
&\|r^{-1}(u_r-\Pi^{-}_{k+1, n}u_{r})\|_{L^2_1(\mathbb T_{i, n} )}+\|u_r-\Pi^{-}_{1, n}u_{r}\|_{H^1_{1}(\mathbb T_{i, n} )}\\
&\leq C2^{-n(k+1)}(\|u_r\|_{\maK^{1}_{a+1, 1}(U)}+\|\vartheta^{-a}r^{-1}u_r\|_{L^2_1(U)})\\
&\|u_z-\Pi^{+}_{k+1, n}u_{z}\|_{H^1_{1}(\mathbb T_{i, n} )}\leq C2^{-n(k+1)}\|u_z\|_{\maK^{1}_{a+1, 1}(U)}\\
&\|p-\Pi_{k, n}p\|_{L^2_1(\mathbb T_{i, n} )}\leq C2^{-n(k+1)}\|p\|_{\maK^{0}_{a, 1}(U)}.
\end{eqnarray*}
\end{lemma}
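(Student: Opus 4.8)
The key point is that on the last patch $\mathbb T_{i,n}$ the dilation argument of Lemma \ref{proposition.4.3} cannot be used verbatim, because after the full dilation by $\lambda=\mathcal O(\kappa^n)$ the function $v_\lambda$ would have to be controlled in $\maK^{k+1}_{a+1,1}$ on a fixed-size neighborhood, which is exactly the regularity the solution may lack at the vertex. So instead I would dilate by a factor $\lambda=\mathcal O(\kappa^{n-1})$ (i.e.\ bring $\mathbb T_{i,n}$ up to the scale of $\mathbb T_{i,1}$), and then on the rescaled patch use only the \emph{lowest-order} stability/approximation estimates \eqref{newst1}--\eqref{st4}, \eqref{st1} and the fact that $\mathbb T_{i,n}$ consists of a \emph{bounded} number of triangles whose individual size is $\mathcal O(\kappa^{n-1}\cdot 2^{-1})$ relative to that scale. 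Concretely: write $\mathbb T_{i,n}=\cup_\ell T_\ell$ with $T_\ell$ the triangles meeting $Q_i$; for the velocity components use the invariance $(\Pi^{\pm}_{k+1,n}v)_\lambda=\Pi^{\pm}_{k+1,n}(v_\lambda)$ together with \eqref{st3}, \eqref{st4} (for $u_r$) and \eqref{newst1}, \eqref{st2} (for $u_z$), and for the pressure use \eqref{st1}; then undo the dilation with Lemma \ref{lem.dilation}.

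\textbf{Key steps.} First, fix $\lambda=2\xi/L$ with $\xi=\sup_{x\in\mathbb T_{i,n}}\vartheta(x)=\mathcal O(\kappa^{n})$, so $\mathbb T_{i,n}/\lambda\subset\maV_i$ has a mesh of size $h/\lambda=\mathcal O(\kappa^{-1}2^{-1})=\mathcal O(1)$; here I will instead want $\lambda=\mathcal O(\kappa^{n-1})$ precisely so that $h/\lambda=\mathcal O(2^{-1}\kappa)$ is a genuinely small fixed number — this is what produces the factor $2^{-n(k+1)}=\kappa^{(n-1)a}2^{(n-1)(k+1)}2^{-n(k+1)}$ after the $\kappa^{(n-1)a}$ from dilation is absorbed by the weight $\xi^{-a}$. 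Second, on the rescaled patch, since the rescaled $\vartheta$ is bounded above and below by constants on $\mathbb T_{i,n}/\lambda$, the weighted norms $\maK^1_{a+1,1}$, $\maK^0_{a,1}$ on that patch are equivalent (up to constants depending only on $\maV_i$) to $H^1_1$, $L^2_1$; apply \eqref{st3}--\eqref{st4} to get $\|r_\lambda^{-1}(v_\lambda-\Pi^-_{k+1,n}v_\lambda)\|_{L^2_1}+\|v_\lambda-\Pi^-_{k+1,n}v_\lambda\|_{H^1_1}\le C\|v_\lambda\|_{H^1_-(U_\lambda)}$, noting the $r^{-1}$ term forces the extra $\|\vartheta^{-a}r^{-1}u_r\|_{L^2_1(U)}$ contribution after dilation (by the second identity in Lemma \ref{lem.dilation}). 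Third, translate back with Lemma \ref{lem.dilation}: the $H^1_1$ and $\maK^1_{a+1,1}$ terms pick up $\lambda^{a-3/2}$ (or $\lambda^{a-1}$ off-axis) and the $r^{-1}$ term picks up $\lambda^{-1/2}$, which after multiplying by the scaling of $L^2_1$-measure and collecting powers of $\kappa^{n-1}$ and $2$ yields the stated $C2^{-n(k+1)}$ bound. Do the analogous computation for $u_z$ using \eqref{newst1}, \eqref{st2} and for $p$ using \eqref{st1}.

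\textbf{Main obstacle.} The delicate part is bookkeeping the exponents so that the $\kappa$-power coming from the dilation of the weight exactly cancels against the $2$-power coming from the (large) number of refinements, leaving $2^{-n(k+1)}$: one must use $\kappa\le 2^{-(k+1)/a}$, i.e.\ $\kappa^a\le 2^{-(k+1)}$, from \eqref{39}, so that $\kappa^{(n-1)a}2^{(n-1)(k+1)}\le 1$. A second, more conceptual subtlety is that on $\mathbb T_{i,n}$ one may only invoke the $m=1$ (velocity) and $m=0$ (pressure) versions of the estimates, because the solution need not lie in any higher weighted space near $Q_i$ when $a<\eta$ is chosen but the singular exponent is small; this is why the right-hand sides feature $\|u_r\|_{\maK^1_{a+1,1}(U)}$, $\|u_z\|_{\maK^1_{a+1,1}(U)}$, $\|p\|_{\maK^0_{a,1}(U)}$ rather than $\maK^{k+2}$, $\maK^{k+1}$ norms, and why the extra term $\|\vartheta^{-a}r^{-1}u_r\|_{L^2_1(U)}$ — which is controlled by Theorem \ref{thm.stokesreg} — is unavoidable for the $r^{-1}$-weighted bound on $u_r$. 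Once these two points are handled, the argument is a routine dilation-and-sum, parallel to Lemmas \ref{proposition.4.3} and \ref{prop.4.5}.
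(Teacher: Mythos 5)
Your core mechanism is the right one and is the same as the paper's: on the last patch no approximation-order gain is available, so one uses only the stability estimates \eqref{st3}--\eqref{st4} (for $u_r$), \eqref{newst1}, \eqref{st2} (for $u_z$) and \eqref{st1} (for $p$), converts the resulting unweighted norms into weighted ones using $\vartheta\lesssim \kappa^{n}$ on $U$, and closes with $\kappa^{a}\le 2^{-(k+1)}$ from \eqref{39}. The paper does exactly this, but without any dilation: since \eqref{st3}--\eqref{st4}, \eqref{newst1}, \eqref{st2}, \eqref{st1} already carry the correct $h_T$-dependence, it applies them directly at the native mesh size $h_T\sim\kappa^{n}$, obtaining $\|r^{-1}u_r\|_{L^2_1(U)}+\kappa^{-n}\|u_r\|_{L^2_1(U)}+|u_r|_{H^1_1(U)}$ and then inserting the weights to pick up $\kappa^{na}\le 2^{-n(k+1)}$. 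Your rescaling to a fixed-size patch followed by Lemma \ref{lem.dilation} is a workable but unnecessary detour; it buys nothing here because, unlike Lemma \ref{proposition.4.3}, no Bramble--Hilbert-type estimate is invoked at the rescaled level.

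Two of your supporting claims are inaccurate and worth correcting, even though they do not break the chain of inequalities you actually use. First, the rescaled $\vartheta$ is \emph{not} bounded below on $\mathbb T_{i,n}/\lambda$: this patch contains the vertex $Q_i$, where $\vartheta$ vanishes, so the asserted equivalence of $\maK^1_{a+1,1}$, $\maK^0_{a,1}$ with $H^1_1$, $L^2_1$ there is false for $a>0$. What is true, and all you need, is the one-sided bound (unweighted norm $\le C\lambda^{a}\,\times$ weighted norm, etc.) coming from the upper bound $\vartheta\lesssim\lambda$ and $a\ge 0$; this is also where the entire factor $2^{-n(k+1)}$ comes from --- not from the smallness of $h/\lambda$, since stability gives no power of the relative mesh size. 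Second, the reason one cannot use the higher-order estimates \eqref{36}--\eqref{38} on $\mathbb T_{i,n}$ is not that the solution ``need not lie in any higher weighted space near $Q_i$'' --- Theorem \ref{thm.stokesreg} gives precisely $\bu\in[\maK^{k+2}_{a+1,1}]^2$, $p\in\maK^{k+1}_{a,1}$ there. The obstruction is that \eqref{36}--\eqref{38} require \emph{unweighted} $H^{k+2}_1$ (resp.\ $H^{k+1}_1$) control on the patch, and on a region touching the vertex the weighted regularity does not imply it (equivalently, the step $\|v_\lambda\|_{H^{k+1}_1}\le C\|v_\lambda\|_{\maK^{k+1}_{1,1}}$ used in Lemma \ref{proposition.4.3} needs $\vartheta$ bounded below, which fails exactly on this last patch). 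With these two points repaired, your argument is a valid, slightly longer variant of the paper's proof.
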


\begin{proof}
Definition \ref{def.4.4} shows that the mesh on $U$ has the size $\maO( \kappa^{n})$. By the stability results in \eqref{st3} and \eqref{st4}, we have 
\begin{eqnarray*}
&\|r^{-1}(u_r-\Pi^{-}_{k+1, n}u_{r})\|_{L^2_1(\mathbb T_{i, n})}+\|u_r-\Pi^{-}_{k+1, n}u_{r}\|_{H^1_{1}(\mathbb T_{i, n})}\\
&\leq C(\|r^{-1}u_r\|_{L^2_1(U)}+\kappa^{-n}\| u_r\|_{L^2_1(U)}+| u_r|_{H^1_1(U)})\\
&\leq C(\kappa^{na}\|\vartheta^{-a}r^{-1}u_r\|_{L^2_1(U)}+\kappa^{-n}\kappa^{n(1+a)}\| u_r\|_{\maK^0_{a+1,1}(U)}+\kappa^{na}| u_r|_{\maK^1_{a+1, 1}(U)})\\
&\leq C2^{-n{(k+1)}}(\|\vartheta^{-a}r^{-1}u_r\|_{L^2_1(U)}+\| u_r\|_{\maK^1_{a+1, 1}(U)}).
\end{eqnarray*}
Then, we have proved the first estimate in this lemma. The last two estimates can be proved similarly by the stability results in \eqref{newst1}, \eqref{st2}, and \eqref{st1}.
\end{proof}

\begin{theorem}\label{thm.main2}
Recall the parameter $\eta$ from Theorem \ref{thm.stokesreg}. For the finite element space $\mathbf V^{k+1}_n\times S^k_n$,  $k\geq 1$, on  $\maT_n$ with $\kappa$ defined in \eqref{39}, let $(\bu_n,p_n)\in \mathbf V^{k+1}_n\times S^k_n$ be the Taylor-Hood finite element approximation of the axisymmetric Stokes equation in \eqref{eqn.35}. Let $N:={\rm{dim}}(\mathbf V^{k+1}_n\times S^k_n)$ be the dimension of the finite element space. Then, if $\bbf\in \maK^{k}_{a-1, -}(\Omega)\times\maK^{k}_{a-1, +}(\Omega)$, for $0<a<\eta$,
\begin{eqnarray*}
&\|\bu-\bu_n\|_{H^1_-(\Omega)\times H^1_+(\Omega)}+\|p-p_n\|_{L^2_{1}(\Omega )}\leq CN^{-(k+1)/2}\|\bbf\|_{\maK^{k}_{a-1, -}(\Omega)\times\maK^{k}_{a-1, +}(\Omega)}.
\end{eqnarray*}
\end{theorem}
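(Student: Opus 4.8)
The plan is to bound the best‑approximation errors in \eqref{eqn.infapprox} by splitting the domain into three regions and summing the local estimates from Lemmas \ref{lem.4.44}, \ref{prop.4.5}, and \ref{prop.4.6}, then converting the powers of the mesh level $n$ into powers of the dimension $N$. First I would recall that by the inf‑sup stability of the Taylor–Hood pair and \eqref{eqn.infapprox}, it suffices to estimate $\|\bu-\Pi_{k+1,n}^{\pm}\bu\|_{H^1_-(\Omega)\times H^1_+(\Omega)}$ and $\|p-\Pi_{k,n}p\|_{L^2_1(\Omega)}$, where $\Pi^{-}_{k+1,n}$ is applied to $u_r$, $\Pi^{+}_{k+1,n}$ to $u_z$, and $\Pi_{k,n}$ to $p$. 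These interpolants preserve the boundary conditions, so the interpolants lie in $\mathbf V^{k+1}_n\times S^k_n$ (after subtracting the mean from $\Pi_{k,n}p$, which only decreases the $L^2_1$ error).

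Next I would decompose $\Omega$, via the triangulation $\maT_n$, as
\begin{eqnarray*}
\Omega=\Big(\maT_n\backslash\cup_i\mathbb T_{i,0}\Big)\ \cup\ \bigcup_{i}\bigcup_{j=1}^{n}\big(\mathbb T_{i,j-1}\backslash\mathbb T_{i,j}\big)\ \cup\ \bigcup_i\mathbb T_{i,n},
\end{eqnarray*}
and estimate the squared interpolation error on each piece. On the region away from the vertices, Lemma \ref{lem.4.44} gives a bound of order $2^{-n(k+1)}$ times the $H^{k+2}_1$ (resp. $H^{k+1}_1$) norm of the solution on $U$, which is finite by Theorem \ref{thm.stokesreg} since $\maK^{k+2}_{a+1,1}\hookrightarrow H^{k+2}_1$ away from $\mathcal Q$. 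On each dyadic annulus $\mathbb T_{i,j-1}\backslash\mathbb T_{i,j}$, Lemma \ref{prop.4.5} gives a bound of order $2^{-n(k+1)}$ times the $\maK^{k+2}_{a+1,1}(U)$‑norm of $\bu$ (resp. $\maK^{k+1}_{a,1}(U)$‑norm of $p$); the key point is that since the $U$'s for fixed $i$ and varying $j$ have bounded overlap (each triangle of $\maT_n$ meets boundedly many such $U$'s), summing the squares over $j$ and $i$ produces the global weighted norms on $\Omega$, again finite by Theorem \ref{thm.stokesreg}. On the innermost patch $\mathbb T_{i,n}$, Lemma \ref{prop.4.6} supplies a bound of order $2^{-n(k+1)}$ times $\|\bu\|_{\maK^1_{a+1,1}(U)}+\|\vartheta^{-a}r^{-1}u_r\|_{L^2_1(U)}$ (resp. $\|p\|_{\maK^0_{a,1}(U)}$), and here the term $\|\vartheta^{-a}r^{-1}u_r\|_{L^2_1(\Omega)}$ appearing on the left side of Theorem \ref{thm.stokesreg} is exactly what is needed. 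Adding the three contributions and applying Theorem \ref{thm.stokesreg} with $m=k$, I obtain
\begin{eqnarray*}
\|\bu-\bu_n\|_{H^1_-(\Omega)\times H^1_+(\Omega)}+\|p-p_n\|_{L^2_1(\Omega)}\leq C2^{-n(k+1)}\|\bbf\|_{\maK^{k}_{a-1,-}(\Omega)\times\maK^{k}_{a-1,+}(\Omega)}.
\end{eqnarray*}

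Finally I would convert the bound in $n$ to a bound in $N$. By the Remark following Definition \ref{def.4.4}, each $\kappa$‑refinement quadruples the number of triangles, so $N=\maO(4^n)$, hence $2^{-n(k+1)}=(4^{n})^{-(k+1)/2}\leq CN^{-(k+1)/2}$, which yields the stated estimate. I expect the main obstacle to be the bookkeeping of the overlap in the second region: one must verify that, with $\kappa$ fixed as in \eqref{39} and the meshes shape‑regular and nested, the patches $U$ associated to $\mathbb T_{i,j-1}\backslash\mathbb T_{i,j}$ have uniformly bounded overlap in $j$, so that $\sum_j\|v\|^2_{\maK^{k+2}_{a+1,1}(U)}\le C\|v\|^2_{\maK^{k+2}_{a+1,1}(\Omega)}$; this is where the particular geometry of the graded mesh and the choice of $\kappa$ (ensuring $\kappa^{(j-1)a}2^{(j-1)(k+1)}\le 1$ in Lemma \ref{prop.4.5}) enter decisively. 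The rest is a routine assembly of the local lemmas and the dimension count.
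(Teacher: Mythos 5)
Your proposal is correct and follows essentially the same route as the paper: quasi-optimality via \eqref{eqn.infapprox}, the three-region decomposition handled by Lemmas \ref{lem.4.44}, \ref{prop.4.5}, and \ref{prop.4.6}, then Theorem \ref{thm.stokesreg} with $m=k$ and the count $N=\maO(4^n)$. The only cosmetic difference is the treatment of the mean-zero constraint on the pressure (you subtract the weighted mean of $\Pi_{k,n}p$, while the paper uses the $L^2_1(\Omega)$-orthogonal projection $q_n$ of $p$ onto $P^k(\Omega)$, which is automatically in $S^k_n$); both are valid.
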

\begin{proof}
Since the interpolation operators $\Pi^{+}_{k, n}$ and $\Pi^{-}_{k, n}$ preserves the zero boundary condition for functions in $H^1_{+, 0}(\Omega)$ and in $H^1_{-, 0}(\Omega)$, respectively, we have 
\begin{eqnarray*}
(\Pi^{-}_{k+1, n}u_r, \Pi^{+}_{k+1, n}u_z)\in \mathbf V^{k+1}_n.
\end{eqnarray*}
Let $q_n\in P^k(\Omega)$ be such that
\begin{eqnarray}\label{qqq}
\int_\Omega q_nv_nrdrdz=\int_\Omega pv_nrdrdz,\quad \forall\ v_n\in P^k(\Omega).
\end{eqnarray}
Choosing $v_n=1$, we see that $q_n\in S^k_n$. Note that summing up the estimates for $p-\Pi_{n, k}p$ in Lemmas \ref{lem.4.44}, \ref{prop.4.5}, and \ref{prop.4.6}, we have
\begin{eqnarray}\label{perror}
\inf_{\chi_n\in P^k(\Omega)}\|p-\chi_n\|_{L^2_1(\Omega)}\leq \|p-\Pi_{k, n}p\|_{L^2_1(\Omega)}\leq C2^{-n(k+1)}\|p\|_{\mathcal K^{k+1}_{a, 1}(\Omega)}.
\end{eqnarray}
The infimum is achieved by the $L^2_1(\Omega)$ projection of $p$ onto $P^k(\Omega)$, which is $q_n$ in \eqref{qqq}.
Therefore, by \eqref{eqn.infapprox} and \eqref{perror}, we have
\begin{eqnarray*}
&&\|\bu-\bu_n\|_{H^1_{-}(\Omega)\times H^1_+(\Omega)}+\|p-p_n\|_{L^2_1(\Omega)}\nonumber\\
&&\leq C(\inf_{\bv_n\in \mathbf V^{k+1}_n}\|\bu-\bv_n\|_{H^1_{-}(\Omega)\times H^1_+(\Omega)}+\inf_{\chi_n\in S^k_n}\|p-\chi_n\|_{L^2_1(\Omega)})\\
&&\leq C(\|u_r-\Pi^{-}_{k+1, n}u_r\|_{H^1_{-}(\Omega)}+ \|u_z-\Pi^{+}_{k+1, n}u_z\|_{H^1_+(\Omega)}+\|p-q_n\|_{L^2_1(\Omega)})\\
&&\leq C(\|u_r-\Pi^{-}_{k+1, n}u_r\|_{H^1_{-}(\Omega)}+ \|u_z-\Pi^{+}_{k+1, n}u_z\|_{H^1_+(\Omega)}+\|p-\Pi_{k, n}p\|_{L^2_1(\Omega)}).
\end{eqnarray*}
Then, summing up the estimates in Lemmas \ref{lem.4.44}, \ref{prop.4.5}, and \ref{prop.4.6}, we have
\begin{eqnarray*}
&\|\bu-\bu_n\|_{H^1_{-}(\Omega)\times H^1_+(\Omega)}+\|p-p_n\|_{L^2_1(\Omega)}\nonumber\\
&\leq C2^{-n(k+1)}(\|\vartheta^{-a}r^{-1}u_r\|_{L^2_1(\Omega)}+\|\bu\|_{[\maK^{k+2}_{a+1, 1}(\Omega)]^2}+\|p\|_{\maK^{k+1}_{a, 1}(\Omega)}).
\end{eqnarray*}
Recall the dimension of the finite element space $N=\mathcal O(4^{n})$. By Theorem \ref{thm.stokesreg} we complete the proof by concluding
\begin{eqnarray*}
\|\bu-\bu_n\|_{H^1_{-}(\Omega)\times H^1_+(\Omega)}+\|p-p_n\|_{L^2_1(\Omega)}\leq CN^{-(k+1)/2}\|\bbf\|_{\maK^{k}_{a-1, -}(\Omega)\times\maK^{k}_{a-1, +}(\Omega)}.
\end{eqnarray*}
\end{proof}

\begin{remark}
Note that near the vertices, our refinement has similar properties to the ones
  in \cite{BNZ205, LN09, MR0478669, Raugel78}.  Regularity is a local property. Instead of using the same parameter $\eta$ for all the vertices of the domain, one can specify a different $\eta_i$ for a different vertex $Q_i$, depending on its location and the interior angle (see Lemmas \ref{lem.2dstokes} and \ref{lem.3dstokes} for the local characterization of $\eta$.). The global regularity estimate in Theorem \ref{thm.stokesreg} still holds if we replace $a$ and $\eta$ by $\ba=(a_i)$ and $\beeta=(\eta_i)$, respectively, where the space $\maK^m_{\bmu, 1}(\Omega)$ can be defined similarly as the space $\maK^m_{\mu, 1}(\Omega)$, but with the specific weight parameter $\mu_i$ (instead of the uniform parameter $\mu$) for the $i$th vertex (see also \cite{LMN10} for weighted spaces with vector indices.). This will increase the flexibility for the use of graded meshes with different grading parameters for different vertices.
\end{remark}

\section{Numerical Illustrations}\label{sec5} 

In this section, we present sample numerical results that confirm our theoretical analysis. In particular, we shall justify the use of graded meshes to recover  the optimal rate of convergence of the finite element approximation for singular solutions of the axisymmetric Stokes equation \eqref{eqn.5}, as predicted in Theorem \ref{thm.main2}. 

\subsection{Numerical experiments} Our numerical tests are implemented on two domains, corresponding to the singularities in  solutions away and on the $z$-axis, respectively. Recall that the determination (the value of $\eta$ in \eqref{eta}) of the optimal graded meshes is based on different criteria for these two cases. In both tests, we use the P2-P1 Taylor-Hood mixed formulation \eqref{eqn.35} and set $f_r=4r^{3/5}$, $f_z=8r^{3/5}\cos{z}$. Note that with this choice, $\bbf\in H^1_-(\Omega)\times H^1_+(\Omega)$ and $\bbf\in \maK^1_{a-1, -}(\Omega)\times \maK^1_{a-1, +}(\Omega)$ for any $0<a<3/2$. 

\begin{figure}
\centering
\hspace{1cm}\includegraphics[scale=0.18]{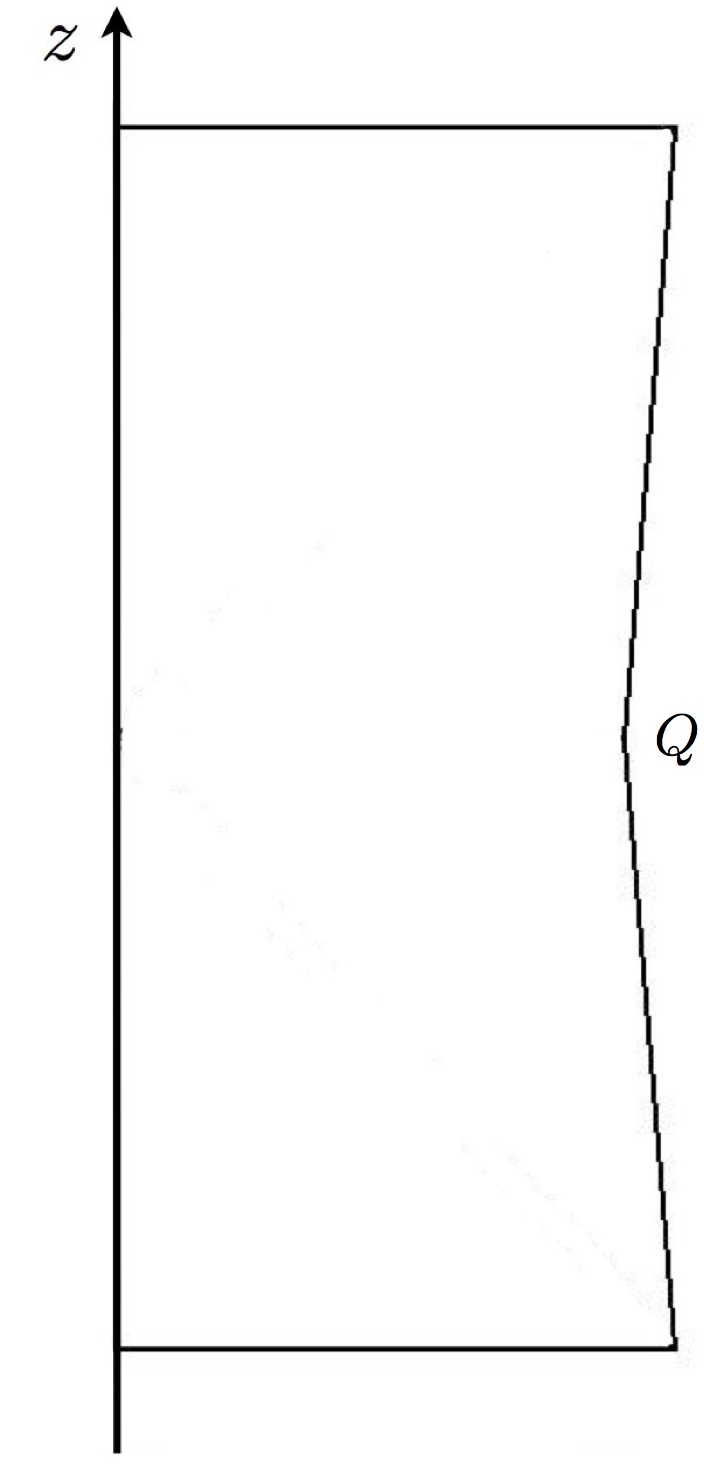}\hspace{4cm}\includegraphics[scale=0.16]{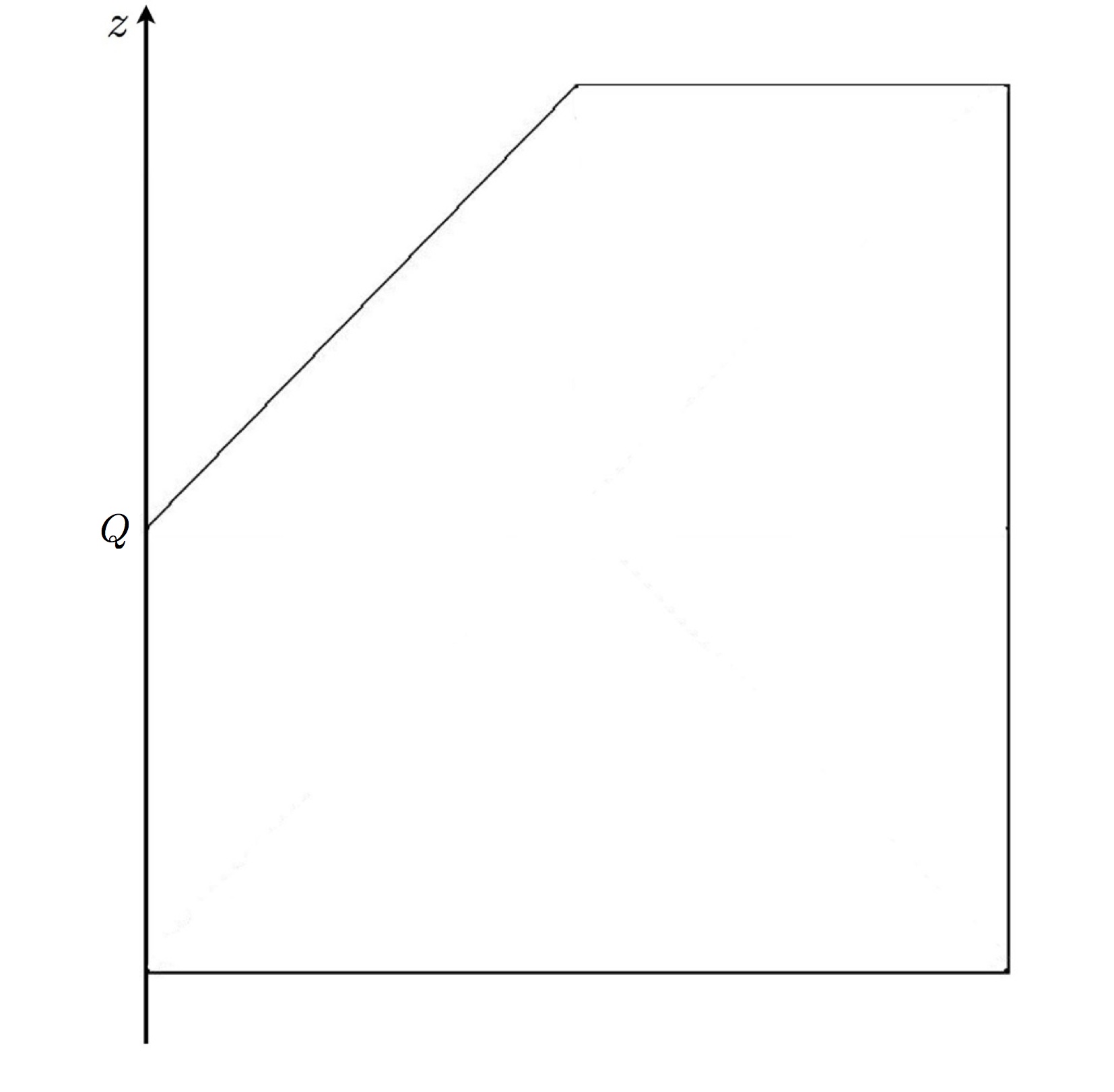}
\caption{Computational domains: $\Omega_1$ (left); $\Omega_2$ (right).}\label{fig1}
\end{figure}

\begin{figure}
\centering
\includegraphics[scale=0.1]{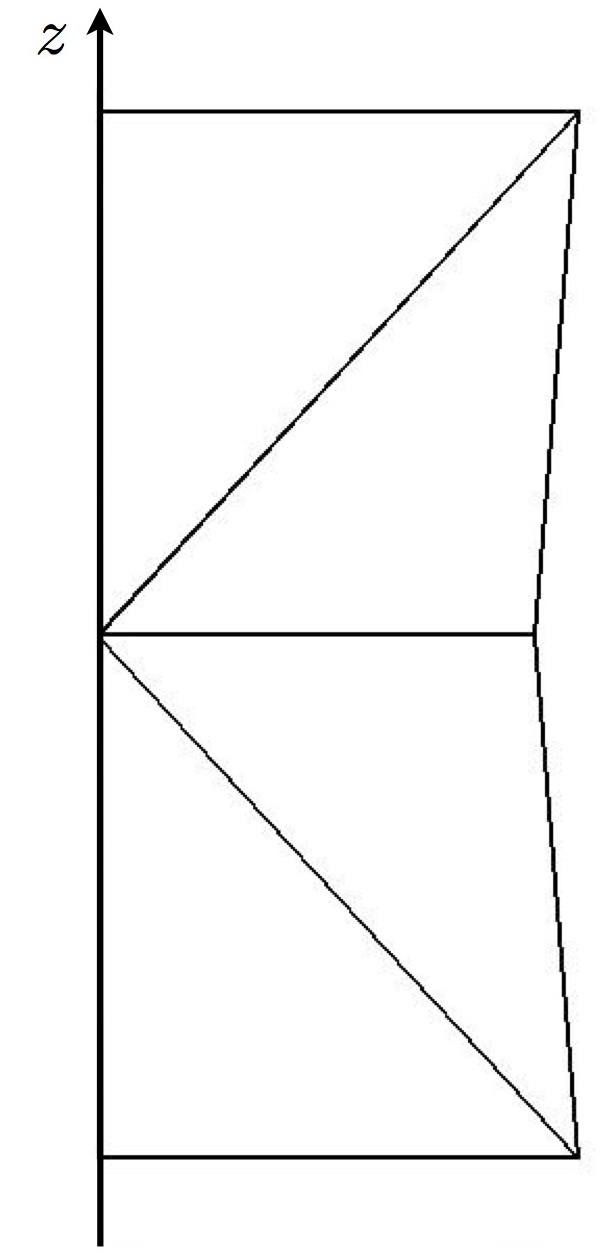}\hspace{2cm}\includegraphics[scale=0.1]{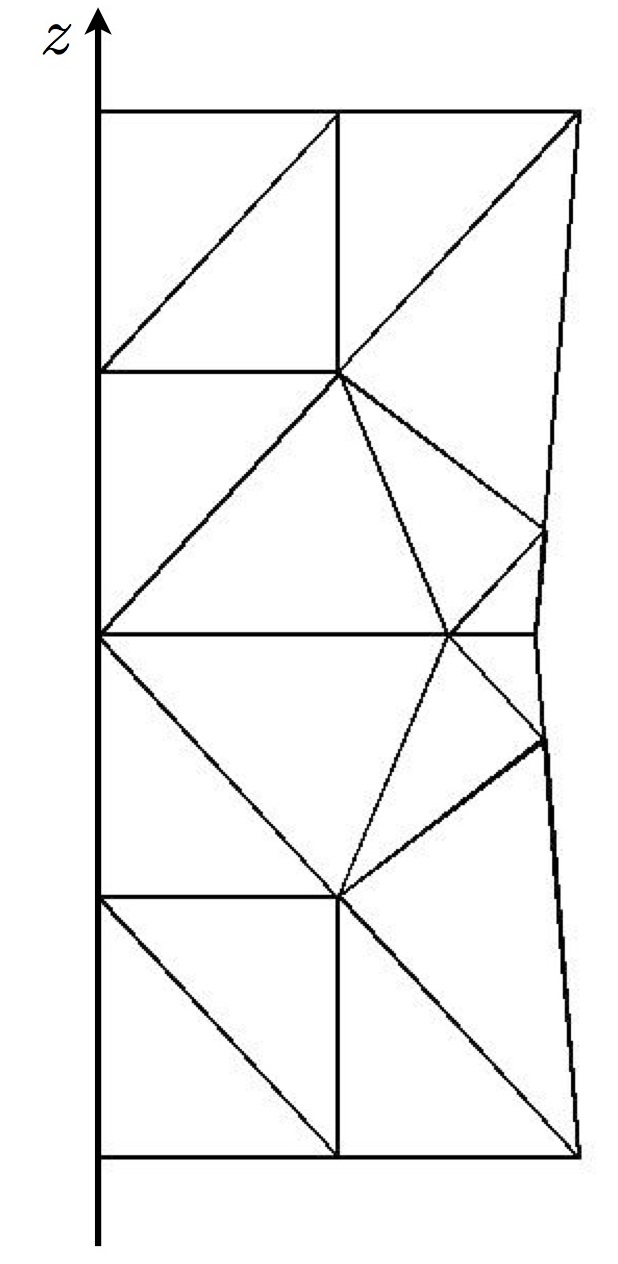}\hspace{2cm}\includegraphics[scale=0.1]{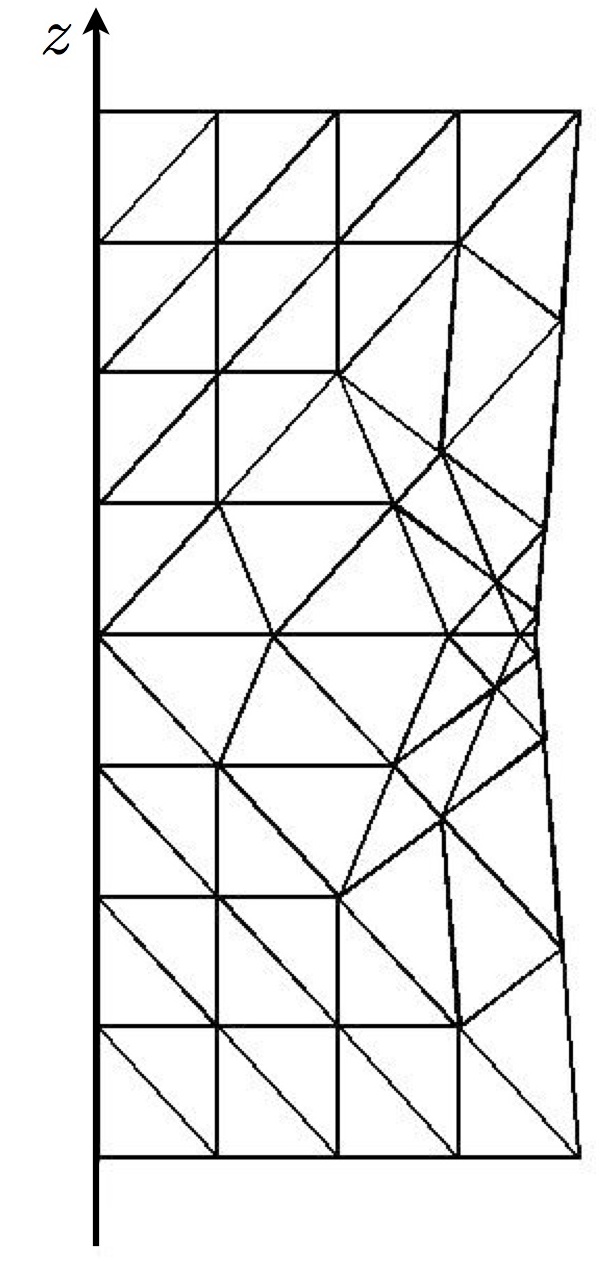}
\caption{Consecutive $\kappa$-refinements for $\Omega_1$ ($\kappa=0.2$): the initial triangulation $\maT_0$ (left); the  mesh after one refinement $\maT_1$ (center); the mesh after two refinements $\maT_2$ (right).}\label{fig10}
\end{figure}

\begin{figure}[htbp]
\centering
\includegraphics[scale=0.15]{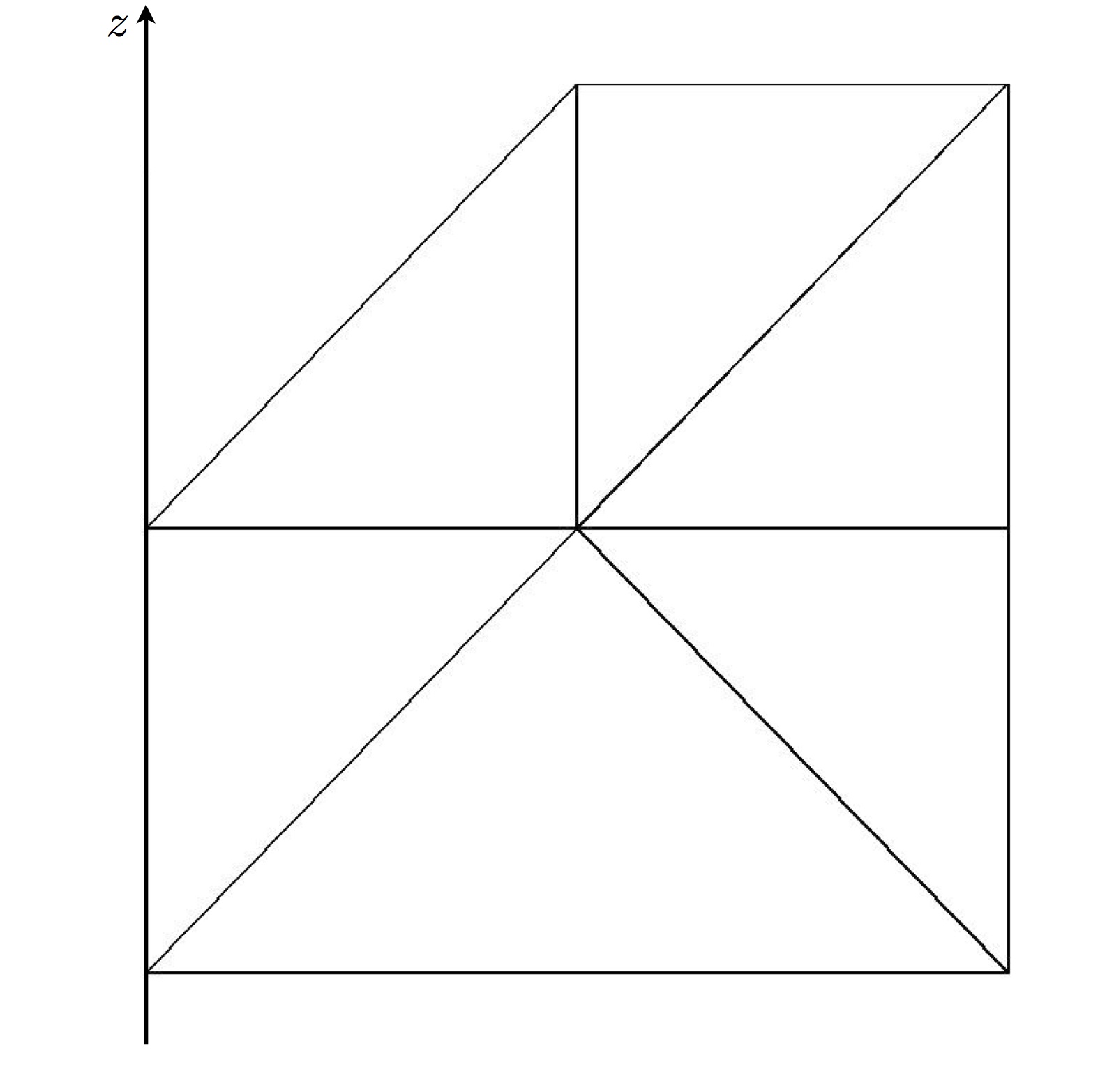}\hspace{0.0cm}\includegraphics[scale=0.15]{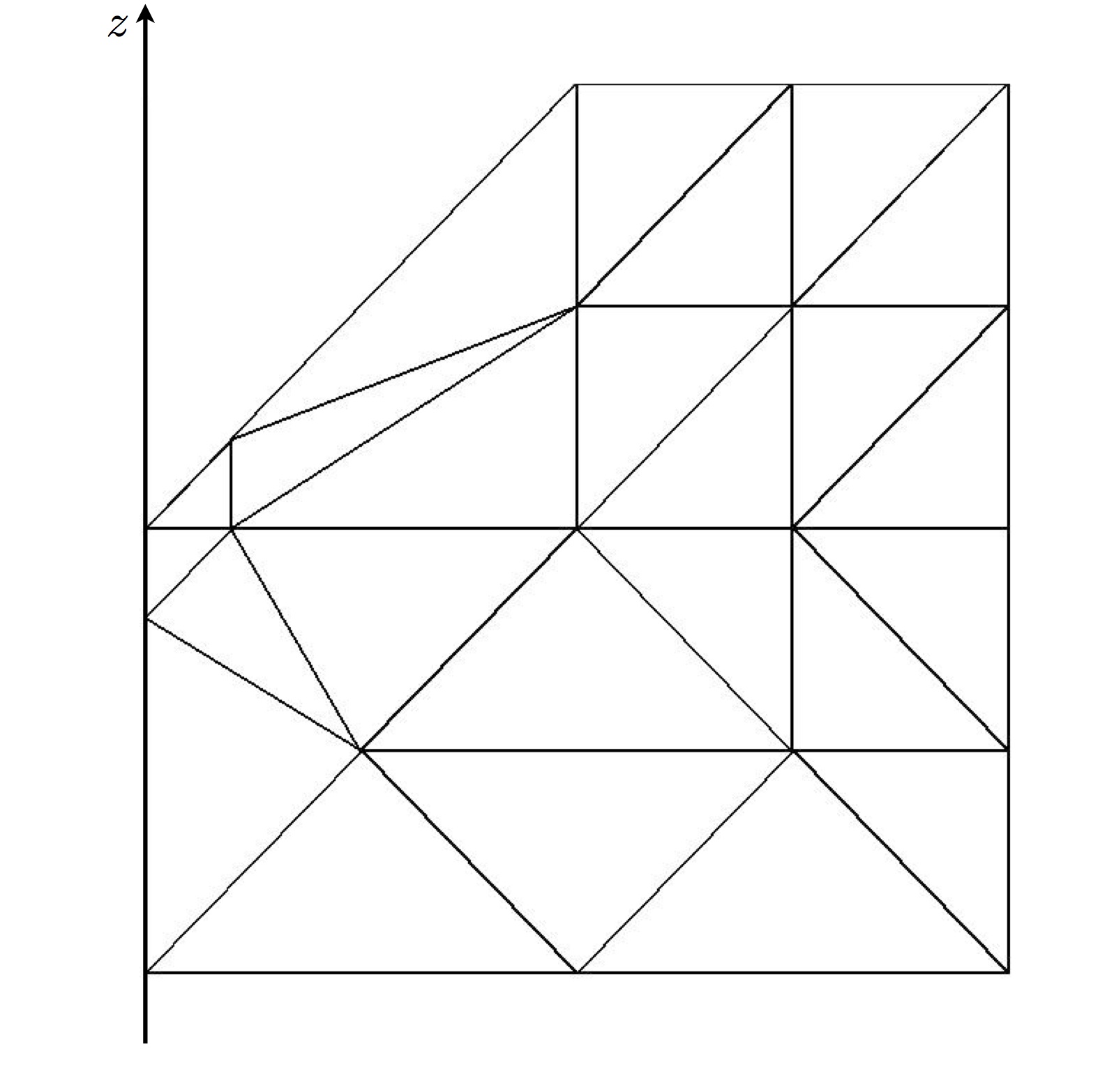}\hspace{0.0cm}\includegraphics[scale=0.15]{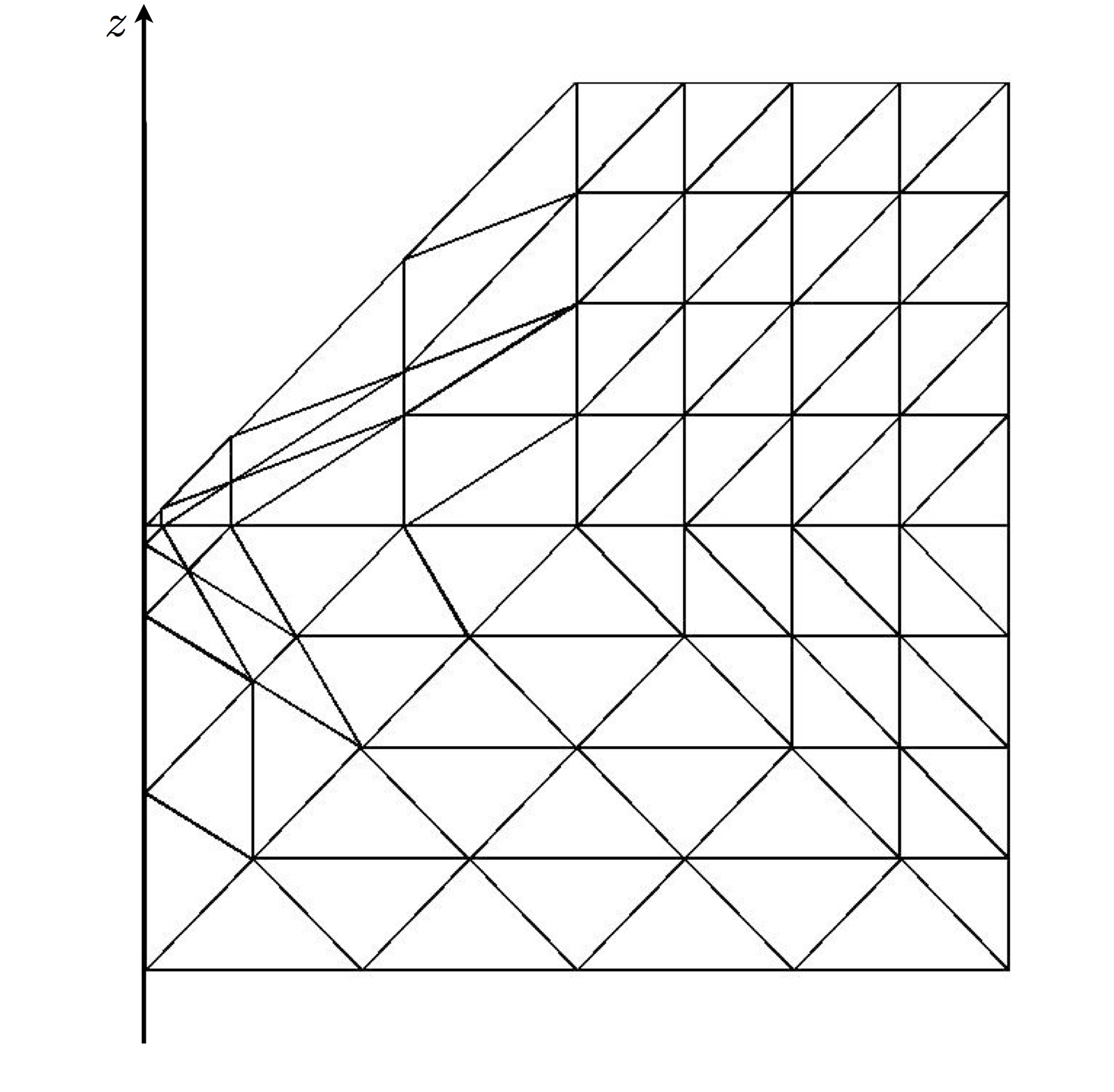}
\caption{Consecutive $\kappa$-refinements for $\Omega_2$ ($\kappa=0.2$): the initial triangulation $\maT_0$ (left); the  mesh after one refinement $\maT_1$ (center); the mesh after two refinements $\maT_2$ (right).}
\end{figure}

We first consider the axisymmetric Stokes equations  on a polygonal domain $\Omega_1$ (the first domain in Figure \ref{fig1}). The interior angle at the vertex $Q$ is $1.05\pi$ and other interior angles $\leq 0.5\pi$. It can be shown that the solution $(\bu, p)\in H^s_1$ near $Q$, for $s<3$ and $(\bu, p)\in H^3_1\times H^3_1\times H^2_1$ in other parts of the domain. In fact, based on the calculation for the eigenvalues of the operator pencil \cite{BR81}, $\eta\approx0.909$ for the vertex $Q$. Therefore, based on Theorem \ref{thm.main2}, the  graded mesh near $Q$ should have the parameter $\kappa<2^{-2/\eta}\approx0.212$ to recover the optimal rate of convergence for the P2-P1 element. Using the same initial triangulation $\maT_0$, We have tested the numerical errors and convergence rates between consecutive numerical solutions up to eight levels of graded refinements for different values of $\kappa$ near the vertex $Q$.

The results of these tests for $\Omega_1$ are listed in tables of \textbf{Data Set 1} of Subsection \ref{sub52}. In view of \eqref{error1}, \eqref{error2}, and Theorem \ref{thm.main2}, the optimal convergence rate for both the velocity and pressure  is 2.0. From the five tables ($\kappa=0.1-0.5$) in \textbf{Data Set 1}, it is clear that  the optimal convergence rates for both variables are obtained on meshes when $\kappa=0.1$ and $\kappa=0.2$. For $\kappa\geq 0.3$, we do not have the optimal convergence rates even on graded meshes. In particular, on quasi-uniform meshes ($\kappa=0.5$), the rate is down to 0.93, which is far smaller than the best possible rate. This verifies our theoretical prediction. Namely, the optimal range of $\kappa$ is  $0<\kappa<0.212$ to achieve the optimal rate of convergence on $\Omega_1$.

Our second set of tests are for another domain $\Omega_2$ (the second domain in Figure \ref{fig1}), which are designed to justify our method for solutions with singularities on the $z$-axis. The interior angle at the vertex $Q$ of $\Omega_2$ is $0.75\pi$  and other interior angles $\leq 0.75\pi$. Based on the calculation on the eigenvalues of the corresponding operator pencil \cite{book} and \eqref{eta}, for the vertex $Q$, the parameter $\eta\approx 0.711+0.5=1.211$. In addition, for other vertices of the domain, we have $\eta>2$. Therefore,  by Theorem \ref{thm.main2}, we need to use   graded mesh near $Q$ with the parameter $\kappa<2^{-2/\eta}\approx0.318$ to approximate the singular solution at the optimal rate.

The numerical results for the second domain $\Omega_2$ are summarized in \textbf{Data Set 2} of Subsection \ref{sub52}. As in our first tests for $\Omega_1$, we clearly see the improvements on the convergence rates by using appropriate graded meshes. \textbf{Data Set 2}  shows that the P2-P1 Taylor-Hood approximations converge in the optimal rate on graded meshes with $\kappa\leq 0.3$ and the rates are slowing down for $\kappa\geq 0.4$, which convincingly supports our estimates in Theorem \ref{thm.main2}. Namely, the optimal range for the grading ratio is $0<\kappa<0.318$.

\subsection{Numerical outcomes} \label{sub52} We here collect the data from our numerical simulations for the P2-P1 Taylor-Hood approximation of the axisymmetric problem on both domains $\Omega_1$ and $\Omega_2$. The convergence rate for the velocity on the $j$th level is computed by 
\begin{eqnarray}\label{error1}
{\rm{rate}}_\bu=\log_2(\frac{\|\bu_{j-1}-\bu_{j-2}\|_{H^1_{-}(\Omega)\times H^1_+(\Omega)}}{\|\bu_{j}-\bu_{j-1}\|_{H^1_{-}(\Omega)\times H^1_+(\Omega)}}),
\end{eqnarray}
where $\bu_j$ is the numerical velocity on the $j$th level of the triangulation. The convergence rate for the pressure on the $j$th level is computed by 
\begin{eqnarray}\label{error2}
{\rm{rate}}_p=\log_2(\frac{\|p_{j-1}-p_{j-2}\|_{L^2_1(\Omega)}}{\|p_{j}-p_{j-1}\|_{L^2_1(\Omega)}}),
\end{eqnarray}
where $p_j$ is the numerical pressure on the $j$th level of the triangulation. These rates are good approximations of the asymptotic convergence rates given in Theorem \ref{thm.main2} in case  the exact solution is not known.\\\\
\textbf{Data Set 1.} Errors and convergence rates for the velocity and pressure on different levels of the graded mesh for $\Omega_1$:
\begin{center}
\begin{tabular}{|c||c|c|c|c|c|c|}
\hline
level  ($\kappa=0.1$) &  
            $\|\bu_j-\bu_{j-1}\|_{H^1_{-}(\Omega_1)\times H^1_+(\Omega_1)}$ & rate$_\bu$ &  $\|p_j-p_{j-1}\|_{L^2_1(\Omega_1)}$ & rate$_p$  \\
\hline
  $4$         &  0.54308907E-02 & x       &  0.61340596E-02 & x \\
  $5$    & 0.13864106E-02  & 1.970 & 0.15621365E-02 & 1.973 \\
  $6$   & 0.34352351E-03  & 2.013 & 0.38467436E-03 & 2.022 \\
  $7$   & 0.85001888E-04  & 2.015 & 0.94333002E-04  & 2.028 \\
  $8$   & 0.21124578E-04 & 2.009  &  0.23369704E-04 & 2.013 \\
\hline \end{tabular}\\\vspace{0.3cm}

\begin{tabular}{|c||c|c|c|c|c|c|}
\hline
level  ($\kappa=0.2$) &  
            $\|\bu_j-\bu_{j-1}\|_{H^1_{-}(\Omega_1)\times H^1_+(\Omega_1)}$ & rate$_\bu$ &  $\|p_j-p_{j-1}\|_{L^2_1(\Omega_1)}$ & rate$_p$  \\
\hline
  $4$  &   0.47745520E-02  & x        &  0.52875473E-02 & x \\
  $5$  &  0.12233139E-02  & 1.965 & 0.13410309E-02 & 1.979 \\
  $6$  &   0.30444567E-03  & 2.007 & 0.33060560E-03 & 2.020 \\
  $7$  &   0.75722130E-04  & 2.007 &  0.81497356E-04  & 2.020 \\
  $8$  &    0.18896191E-04 & 2.003 & 0.20268038E-04 & 2.008 \\
\hline
\end{tabular}\vspace{0.3cm}

\begin{tabular}{|c||c|c|c|c|c|c|}
\hline
level  ($\kappa=0.3$) &  
            $\|\bu_j-\bu_{j-1}\|_{H^1_{-}(\Omega_1)\times H^1_+(\Omega_1)}$ & rate$_\bu$ &  $\|p_j-p_{j-1}\|_{L^2_1(\Omega_1)}$ & rate$_p$  \\
\hline
  $4$        & 0.44173990E-02  & x        &  0.47079809E-02 & x \\
  $5$   & 0.11510414E-02  & 1.940 & 0.12027935E-02 & 1.969 \\
  $6$   & 0.29441776E-03  & 1.967 & 0.30352941E-03 & 1.987 \\
  $7$  & 0.76396705E-04  & 1.946 & 0.77859399E-04  & 1.963\\
  $8$    & 0.20346886E-04 & 1.909  & 0.20544175E-04 & 1.922 \\
\hline
\end{tabular}\\\vspace{0.3cm}

\begin{tabular}{|c||c|c|c|c|c|c|}
\hline
level  ($\kappa=0.4$) &  
            $\|\bu_j-\bu_{j-1}\|_{H^1_{-}(\Omega_1)\times H^1_+(\Omega_1)}$ & rate$_\bu$ &  $\|p_j-p_{j-1}\|_{L^2_1(\Omega_1)}$ & rate$_p$  \\
\hline
  $4$  &   0.43660475E-02  & x        &  0.44774198E-02 & x \\
  $5$  &   0.12281546E-02  & 1.830 & 0.12304147E-02 & 1.864 \\
  $6$  &   0.37522157E-03  & 1.711 & 0.37294953E-03 & 1.722 \\
  $7$  &  0.13214118E-03  & 1.506 & 0.13115375E-03  & 1.508 \\
  $8$  &  0.52227900E-04  & 1.339 & 0.51859766E-04 & 1.339 \\
\hline
\end{tabular}\\\vspace{0.3cm}

\begin{tabular}{|c||c|c|c|c|c|c|}
\hline
level  ($\kappa=0.5$) &  
            $\|\bu_j-\bu_{j-1}\|_{H^1_{-}(\Omega_1)\times H^1_+(\Omega_1)}$ & rate$_\bu$ &  $\|p_j-p_{j-1}\|_{L^2_1(\Omega_1)}$ & rate$_p$  \\
\hline
  $4$  &    0.47263869E-02  & x        &  0.47219944E-02 & x \\
  $5$  &     0.16133776E-02 & 1.551 & 0.15797647E-02 & 1.580 \\
  $6$  &    0.69734222E-03 & 1.210 & 0.68000773E-03 & 1.216 \\
  $7$  &   0.34933809E-03  & 0.997 & 0.33991358E-03  & 1.000 \\
  $8$  &   0.18340080E-03 & 0.930  & 0.17817426E-03 & 0.932 \\
\hline
\end{tabular}
\end{center}
\hspace{0.3cm}
\begin{center}
\includegraphics[scale=0.25]{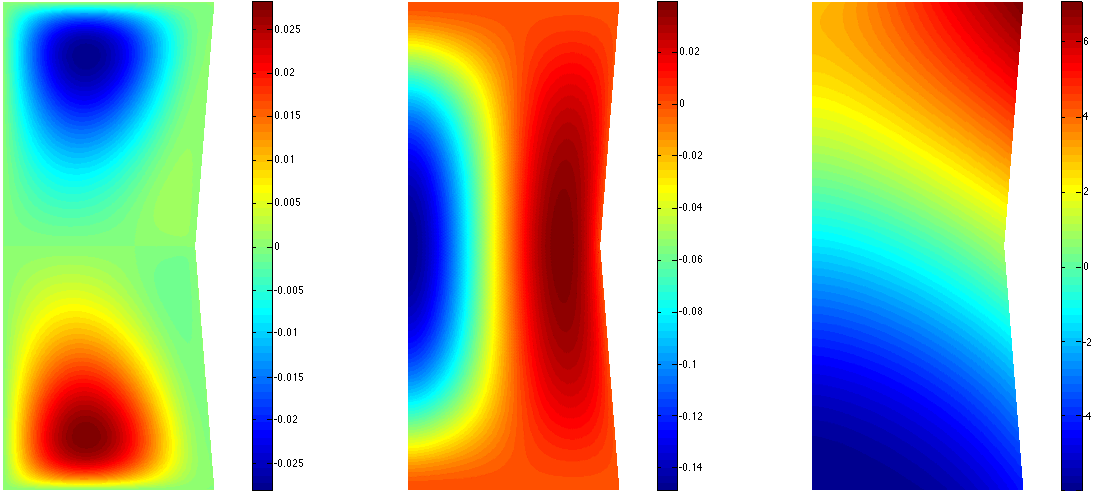}\\
{Numerical solutions on $\Omega_1$:  the radial component of the velocity (left);  the axial component of the velocity (center); the pressure (right).}
\end{center}
\vspace{0.3cm}
\textbf{Data Set 2.} Errors and convergence rates for the velocity and pressure on different levels of the graded mesh for $\Omega_2$:
\begin{center}
\begin{tabular}{|c||c|c|c|c|c|c|}
\hline
level ($\kappa=0.1$) &  
            $\|\bu_j-\bu_{j-1}\|_{H^1_{-}(\Omega_2)\times H^1_+(\Omega_2)}$ & rate$_\bu$ &  $\|p_j-p_{j-1}\|_{L^2_1(\Omega_2)}$ & rate$_p$ \\
\hline
  $4$  &    0.11515746E-01 & x &  0.15061640E-01 & x \\
  $5$  &    0.31781579E-02   & 1.858  & 0.41813381E-02 & 1.849 \\
  $6$  &    0.85557256E-03  & 1.893 &  0.11418853E-02 & 1.873 \\
  $7$  &   0.22457309E-03 & 1.930 & 0.30406439E-03 & 1.909 \\
  $8$  &    0.57563869E-04 & 1.964 & 0.78724222E-04 & 1.950 \\
\hline
\end{tabular}\\\vspace{0.3cm}

\begin{tabular}{|c||c|c|c|c|c|c|}
\hline
level ($\kappa=0.2$) &  
            $\|\bu_j-\bu_{j-1}\|_{H^1_{-}(\Omega_2)\times H^1_+(\Omega_2)}$ & rate$_\bu$ &  $\|p_j-p_{j-1}\|_{L^2_1(\Omega_2)}$ & rate$_p$ \\
\hline
  $4$  &    0.10464944E-01   & x        & 0.13784984E-01 & x \\
  $5$  &   0.28469055E-02   & 1.878 & 0.37366516E-02 & 1.883 \\
  $6$  &   0.76118324E-03    & 1.903 & 0.10031844E-02 & 1.897 \\
  $7$  &    0.19948945E-03   & 1.932 & 0.26373350E-03 & 1.927 \\
  $8$  &    0.51149045E-04   & 1.964 & 0.67587285E-04 & 1.964 \\
\hline
\end{tabular}\\\vspace{0.3cm}

\begin{tabular}{|c||c|c|c|c|c|c|}
\hline
level ($\kappa=0.3$) &  
            $\|\bu_j-\bu_{j-1}\|_{H^1_{-}(\Omega_2)\times H^1_+(\Omega_2)}$ & rate$_\bu$ &  $\|p_j-p_{j-1}\|_{L^2_1(\Omega_2)}$ & rate$_p$ \\
\hline
  $4$  &  0.98407840E-02 & x        &   0.12914495E-01 & x \\
  $5$  &   0.26674362E-02 & 1.883 &  0.34435199E-02 & 1.907 \\
  $6$  &   0.71166545E-03  & 1.906 & 0.91048474E-03 & 1.919 \\
  $7$  &   0.18647577E-03  & 1.932 & 0.23637606E-03 & 1.946  \\
  $8$  &   0.47900514E-04  & 1.961 & 0.60046266E-04 & 1.977 \\
\hline
\end{tabular}\\\vspace{0.3cm}

\begin{tabular}{|c||c|c|c|c|c|c|}
\hline
level ($\kappa=0.4$) &  
            $\|\bu_j-\bu_{j-1}\|_{H^1_{-}(\Omega_2)\times H^1_+(\Omega_2)}$ & rate$_\bu$ &  $\|p_j-p_{j-1}\|_{L^2_1(\Omega_2)}$ & rate$_p$ \\
\hline
  $4$  &  0.95957034E-02 & x &   0.12356731E-01 & x \\
  $5$  &   0.26421705E-02 & 1.861 &  0.32744105E-02 & 1.916 \\
  $6$  &   0.72574943E-03 & 1.864 &  0.86734770E-03 & 1.917 \\
  $7$  &   0.19991811E-03 & 1.860  &  0.22897740E-03 & 1.921 \\
  $8$  &   0.55620873E-04 & 1.846 &  0.60571541E-04 & 1.919 \\
\hline
\end{tabular}\\\vspace{0.3cm}

\begin{tabular}{|c||c|c|c|c|c|c|}
\hline
level ($\kappa=0.5$) &  
            $\|\bu_j-\bu_{j-1}\|_{H^1_{-}(\Omega_2)\times H^1_+(\Omega_2)}$ & rate$_\bu$ &  $\|p_j-p_{j-1}\|_{L^2_1(\Omega_2)}$ & rate$_p$ \\
\hline
  $4$  &  0.10091361E-01 & x &   0.12358326E-01 & x \\
  $5$  &   0.31028469E-02 & 1.702  &  0.34365922E-02 & 1.846 \\
  $6$  &   0.10525854E-02 & 1.560  &  0.10381522E-02 & 1.727 \\
  $7$  &   0.39684959E-03 & 1.407  &  0.35441706E-03 & 1.551 \\
  $8$  &   0.16108656E-03 & 1.301  &  0.13597125E-03 & 1.382 \\
\hline
\end{tabular}
\end{center}

\vspace{0.3cm}
\begin{center}
\includegraphics[scale=0.27]{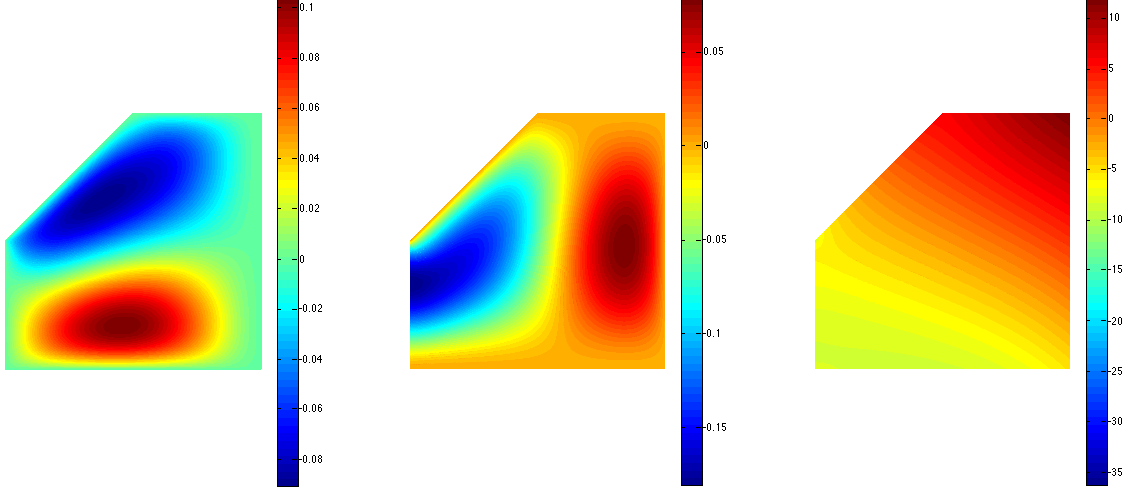}\\
{Numerical solutions on $\Omega_2$:  the radial component of the velocity (left);  the axial component of the velocity (center); the pressure (right).}
\end{center}

\bibliography{fem}
\bibliographystyle{plain}

\end{document}